\documentclass[11pt]{amsart}
\usepackage{amsmath}
\usepackage{amsfonts}
\usepackage{amsthm}
\usepackage{amssymb}

\theoremstyle{plain} \numberwithin{equation}{section}
\newtheorem{Theorem}{Theorem}
\newtheorem{Lemma}[Theorem]{Lemma}
\newtheorem{Proposition}[Theorem]{Proposition}
\newtheorem{Corollary}[Theorem]{Corollary}
\newtheorem{Remark}[Theorem]{Remark}

\newtheorem{Criterion}[Theorem]{Criterion}
\newtheorem{Example}[Theorem]{Example}
\newtheorem{Question}[Theorem]{Question}

\theoremstyle{remark}

\begin{document}

\title[Asymptotic formulas]{Asymptotic formulas for spectral gaps
 and deviations of Hill and 1D Dirac operators}

{\author{Plamen Djakov}}\thanks{P. Djakov acknowledges the hospitality
of the Department of Mathematics of the Ohio State University,  July--August 2013.}

\author{Boris Mityagin}\thanks{B. Mityagin acknowledges the support
of the Scientific and Technological Research Council
of Turkey and the hospitality of Sabanci University, May--June, 2013.}

\address{Sabanci University, Orhanli,
34956 Tuzla, Istanbul, Turkey}
 \email{djakov@sabanciuniv.edu}

\address{Department of Mathematics,
The Ohio State University,
 231 West 18th Ave,
Columbus, OH 43210, USA} \email{mityagin.1@osu.edu}

\begin{abstract}
Let $L$ be the Hill operator or the one dimensional Dirac
operator on the interval $[0,\pi].$ If $L$ is considered with
Dirichlet, periodic or antiperiodic boundary conditions, then the
corresponding spectra are discrete and for large enough $|n|$  close
to $n^2 $ in the Hill case,  or close to $n, \; n\in \mathbb{Z}$ in
the Dirac case, there are one Dirichlet eigenvalue $\mu_n$ and two
periodic (if $n$ is even) or antiperiodic (if $n$ is odd) eigenvalues
$\lambda_n^-, \, \lambda_n^+ $ (counted with multiplicity).

We give estimates for the asymptotics of the spectral gaps $\gamma_n
= \lambda_n^+ - \lambda_n^-$ and deviations $ \delta_n =\mu_n -
\lambda_n^+$ in terms of the Fourier coefficients of the potentials.
Moreover, for special potentials that are trigonometric polynomials
we provide precise asymptotics of $\gamma_n$ and $\delta_n.$

\end{abstract}

\maketitle

{\it Keywords}: Hill operator, one-dimensional Dirac operator,
periodic,  antiperiodic and Dirichlet boundary conditions.

 {\it MSC:} 47E05, 34L40, 34L10.

\section*{Content}
\begin{enumerate}

\item[Section 1.]  Introduction\\

\item[Section 2.] Estimates of spectral gaps
and deviations\\

\item[Section 3.] Asymptotics of spectral gaps
$\gamma_n = \lambda_n^+ - \lambda_n^-$\\

\item[Section 4.] Asymptotics of deviations $\delta_n = \mu_n- \lambda_n^+$\\

\item[Section 5.]   Asymptotics of the spectral gaps and
deviations of Hill operators with potentials
$v(x) = a e^{-2ix} + b e^{2ix}$  \\

\item[Section 6.] Other two exponential term potentials\\

\item[Section 7.]   Asymptotics for Hill operators with potentials
$v(x) = a e^{-2ix} + b e^{2ix} + A e^{-4ix} + B e^{4ix}$ \\

\item[Section 8.]  Dirac operators with potentials
that are trigonometric polynomials

\item[Section 9.]  Concluding Remarks and Comments\\

\item[]   References

\end{enumerate}
\bigskip

\section{Introduction}

Let $L$ be either the Hill operator
\begin{equation}
\label{i1} L(y)=-y''+v(x)y,
\end{equation}
with a complex valued potential
$v \in L^2([0,\pi], \mathbb{C}),$    or the one dimensional
Dirac operator
\begin{equation}
\label{i2} L \, y = i
\begin{pmatrix} 1 & 0 \\ 0 & -1
\end{pmatrix}
\frac{dy}{dx}  + v(x) y, \quad y = \begin{pmatrix} y_1\\y_2
\end{pmatrix},  \quad
x\in[0,\pi],
\end{equation}
with a potential matrix $ v(x) =
\begin{pmatrix} 0 & P(x) \\ Q(x) & 0 \end{pmatrix}$ (not necessarily hermitian)
such that
$  P,Q \in L^2([0,\pi], \mathbb{C}).$
We consider these operators on the interval $[0,\pi] $ with
Dirichlet $(Dir),$  periodic  $(Per^+)$ and antiperiodic  $(Per^-)$
boundary conditions $(bc),$ and denote the corresponding closed
operators, respectively, by $L_{Dir}, $ $L_{Per^+}$ and $L_{Per^-}.$
Recall that in the Hill case
\begin{eqnarray}
\label{i3}  Dir: \qquad  y(0) = 0, \quad y(\pi) = 0, \\
\label{i4} Per^\pm: \quad   y(\pi) = \pm y(0), \quad y^\prime (\pi) =
\pm y^\prime (0),
\end{eqnarray}
and in the Dirac case
\begin{eqnarray}
\label{i5}  Dir: \qquad  y_1 (0) = y_2 (0), \qquad y_1 (\pi) = y_2 (\pi), \\
\label{i6} Per^\pm: \quad  y_1 (\pi) = \pm y_1 (0), \quad y_2 (\pi) =
\pm y_2 (0).
\end{eqnarray}

It is well known that the spectra of the operators $L_{Dir} $ and
$L_{Per^\pm} $ are discrete and for large enough $|n|$ close to $n^2
$ in the Hill case,  or close to $n, \; n\in \mathbb{Z}$ in the Dirac
case, there are one Dirichlet eigenvalue $\mu_n$ and two periodic (if
$n$ is even) or antiperiodic (if $n$ is odd) eigenvalues
$\lambda_n^-, \, \lambda_n^+ $ (counted with multiplicity). See
basics and details in \cite{E, MW,Mar,BESch, DM15}.

In this paper we study the ``asymptotic'' geometry of the
``spectral triangles'' with vertices $\lambda_n^-, \, \lambda_n^+,
\mu_n $ and provide asymptotic formulas for the {\em spectral gaps}
$\gamma_n = \lambda_n^+ - \lambda_n^-$ and {\em deviations}
$\delta_n = \mu_n - \lambda_n^+. $ Our study is motivated by the fact
that the rates of decay of spectral gaps and deviations are directly
related  to the smoothness of the potential  (see \cite{DM15, DM21})
and the Riesz basis
property of the root function systems of the operators $L_{Per^+}$
and $L_{Per^-}$  (see \cite{DM28,GT12} and the bibliography therein).

In the self-adjoint case (i.e., if $v$ is real valued in the Hill
case, or $Q(x)= \overline{P(x)}$ in the Dirac case), 
we have $\lambda_n^- \leq \mu_n \leq \lambda_n^+$
and the spectral
triangle reduces to a segment on the real line. In the context of Hill
operators Hochstadt \cite{Ho1,Ho2}  discovered a direct connection
between the smoothness of potentials $v$ and the rate of decay of
spectral gaps $\gamma_n = \lambda^+_n - \lambda^-_n:$ {\em If

$(A) \; \; v \in C^\infty, $ i.e., $v$ is infinitely differentiable,
then

$(B) \; \; \gamma_n $ decreases more rapidly than any power of $1/n.$

If a continuous function $v$ is a finite--zone potential, i.e.,
$\gamma_n =0$ for all large enough $n,$ then} $v \in C^\infty. $\\
In the middle of 70's (see \cite{MO1}, \cite{MT}) the latter
statement was extended, namely it was shown that  $ \; (B)
\Rightarrow (A)$ for real $L^2 ([0,\pi])$-potentials $v.$ Let us
mention also Trubowitz's result \cite{Tr} that a real valued
$L^2 ([0,\pi])$-potential $v$ is analytic if and only if $(\gamma_n )$
decays exponentially.

If $v$ is a complex-valued potential then the operator (\ref{i1}) is
non-self-adjoint, and in general the asymptotics of $\gamma_n
=\lambda_n^+ - \lambda_n^-$ does not determine the smoothness of $v$
as  Gasymov's example~\cite{Gas} showed. It was an idea of   Tkachenko
\cite{Tk92,Tk94} to bring into consideration the Dirichlet spectrum
and characterize the $C^\infty$-smoothness and analyticity in terms
of  $\gamma_n$ and $\delta_n= \mu_n - \lambda_n^+ $  (see also
\cite{ST}).

The above results have been obtained by using the Inverse Spectral
Theory. Kappeler and Mityagin \cite{KaMi01} suggested another
method, based on Fourier Analysis. They proved that if $\Omega=
(\Omega (m))_{m\in \mathbb{Z}} $ is a submultiplicative weight
sequence and the corresponding weighted Sobolev space is defined as
$$
H(\Omega) = \{v(x) = \sum_{k \in \mathbb{Z}} v_k e^{2ikx}, \quad
\sum_{k \in \mathbb{Z}} |v_k|^2 (\Omega(k))^2 < \infty \},
$$
then
 $$
 (A^*) \quad v\in H(\Omega) \quad \Longrightarrow \quad  (B^*) \quad
   (\gamma_n) ,  \left (\delta_n \right ) \in
  \ell^2(\Omega).
 $$
The same approach was used and developed further in \cite{DM3,DM5,
DM15} to obtain that $(B^*) \Rightarrow (A^*)$ under some mild
restrictions on the submultiplicative weight $\Omega. $

The analysis in \cite{KaMi01, DM3, DM5, DM15} is carried out under
the assumption $v\in L^2([0,\pi]).$  Following Savchuk and Shkalikov
\cite{SS03} and Hryniv and Mykytyuk \cite{HM01}, the authors
\cite{DM16} used quasi-derivatives  to develop a Fourier method for
studying the spectra of $L_{Per^\pm}$ and $L_{Dir}$ in the case of
periodic singular potentials. Moreover, the above results were
extended and strengthened as follows: if  $v \in H^{-1}_{per}
(\mathbb{R})$ and $\Omega$ is a weight of the form
$\Omega(m)=\omega(m)/|m|$ for $m \neq 0,$ with $\omega$ being a
sub-multiplicative weight, then $(A^*) \Rightarrow (B^*),  $ and
conversely, if in addition $(\log \omega (n))/n$ decreases to zero,
then $(B^*) \Rightarrow (A^*)$ (see Theorem 37 in \cite{DM21}.)

The situation is similar for Dirac operators but the relationship
between the smoothness of potentials and the decay rate of spectral
gaps has been considered much later \cite{GKM,GK1,DM7,DM6, DM15}.  Even
analogs of the Hochstadt and Trubowitz  criteria have been given
only in 2003 (see \cite{DM7}, where these results are announced, and
\cite{DM6} for complete proofs).

Again, in the self-adjoint case (i.e., when $\overline{Q}= P$) the
smoothness of $P$ and $Q$ is characterized by the decay rate of
$\gamma_n = \lambda_n^+ - \lambda_n^-, $ while in the
non-self-adjoint case this is not true, but the decay rate of the sequence
$|\gamma_n|  +|\delta_n|$ characterizes the smoothness of the
potential matrix (see Theorems 58 and 68 in \cite{DM15}).

The proofs of all these results use essentially the following statement
(see \cite[Section 2.4]{DM15} for Dirac operators with $L^2$-potentials
and \cite[Lemma 6]{DM21} for Hill-Schr\"odinger operators with
$H^{-1}_{per}$-potentials).

\begin{Lemma}
\label{basic}
There are functionals $\alpha_n (v;z) $ and $
\beta^\pm_n (v;z) $ defined for large enough $n \in \mathbb{N}$ and
$ |z| \leq n/4 $ in the Hill case, or for large enough $ |n|,  \; n
\in \mathbb{Z}$ and $ |z| \leq 1/2 $ in the Dirac case, such that
$\lambda = n^2 + z $ (or $\lambda= n+z $ in the Dirac case) is a
periodic (for even $n$) or anti-periodic (for odd $n$) eigenvalue of
$L$ if and only if $z$ is an eigenvalue of the matrix
\begin{equation}
\label{p1}  \left [
\begin{array}{cc} \alpha_n (v;z)  & \beta^-_n (v;z)
\\ \beta^+_n (v;z) &  \alpha_n (v;z) \end{array}
\right ].
\end{equation}
Moreover, $\alpha_n (z;v) $ and $\beta^\pm_n (z;v)$ depend
analytically on $v$ and $z,$  and $z_n^\pm =\lambda_n^\pm -n^2$
(respectively $z_n^\pm=\lambda_n^\pm -n$ in the Dirac case) are the
only solutions of the basic equation
\begin{equation}
\label{p2}   (z-\alpha_n (v;z))^2=  \beta^-_n (v;z)\beta^+_n (v;z),
\end{equation}
where $ |z| \leq n/4 $ in the Hill case, or $ |z| \leq 1/2 $ in the
Dirac case.
\end{Lemma}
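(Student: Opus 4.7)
The plan is to carry out a Lyapunov--Schmidt reduction in the Fourier picture. Work in the exponential basis adapted to the parity of $n$: for $Per^+$ (even $n$) take $\{e^{2ikx}\}_{k\in\mathbb{Z}}$, and for $Per^-$ (odd $n$) take $\{e^{i(2k+1)x}\}_{k\in\mathbb{Z}}$; in the Dirac case, use the same exponentials paired with the two--component block structure of the free operator. Writing $y = \sum_k y_k e_k$ and expanding the potential in its Fourier series, the equation $Ly = \lambda y$ with $\lambda = n^2 + z$ (or $\lambda = n+z$ in the Dirac case) transforms into an infinite linear system of the schematic form
$$(\lambda_k^{(0)} - \lambda)\, y_k + \sum_{m\ne k} \widehat{v}(k,m)\, y_m = 0,$$
where $\lambda_k^{(0)} = k^2$ in the Hill case, $\lambda_k^{(0)} = \pm k$ in the Dirac case, and the off--diagonal entries $\widehat{v}(k,m)$ are the Fourier coefficients of $v$.

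Next, split the index set into the resonant pair $\{+n,-n\}$ and its complement $E_n$. On $E_n$, a direct estimate using $|z|\le n/4$ in the Hill case yields $|\lambda_k^{(0)}-\lambda|\gtrsim |n|\,|n\pm k|$, while in the Dirac case $|z|\le 1/2$ gives $|\lambda_k^{(0)}-\lambda|\gtrsim 1$. Thus the diagonal part of the $E_n$--block is invertible, and the $E_n$--block of the system can be rewritten as
$$(I - K_n(z))\, u \;=\; f_n(z;\, y_n,\, y_{-n}),$$
where $u = (y_k)_{k\in E_n}$, the operator $K_n(z)$ is the composition of the diagonal resolvent with the Fourier convolution by $v$, and $f_n$ is linear in the two resonant amplitudes and analytic in $(v,z)$. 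Substituting the Neumann solution $u = (I-K_n(z))^{-1}f_n(z;y_n,y_{-n})$ back into the two equations indexed by $k=\pm n$ collapses the full system into the $2\times 2$ form
$$\left[\begin{array}{cc} z-\alpha_n(v;z) & -\beta_n^-(v;z)\\ -\beta_n^+(v;z) & z-\alpha_n(v;z)\end{array}\right]\binom{y_n}{y_{-n}} \;=\; 0,$$
in which $\alpha_n$ records all ``virtual'' Fourier paths that leave and return to the same resonant index, while $\beta_n^\pm$ record the paths crossing between $+n$ and $-n$. Hence $\lambda$ is a $Per^\pm$--eigenvalue iff this matrix is singular, i.e.\ iff $(z-\alpha_n)^2 = \beta_n^-\beta_n^+$, which is equation (\ref{p2}).

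The sole real obstacle is establishing that $\|K_n(z)\|<1$ for all sufficiently large $|n|$, uniformly in the admissible $z$, so that the Neumann series defining $(I-K_n(z))^{-1}$ converges and its entries depend analytically on $v$ and $z$. This reduces to controlling small--divisor sums of the form $\sum_{k\in E_n}|\widehat{v}(k\mp n)|^2/|n^2-k^2|^2$ together with their iterated analogues, each of which tends to $0$ as $|n|\to\infty$ for any $v\in L^2$ by Parseval's identity and a standard majorization; in the Dirac case one simply replaces the scalar resolvent by the $2\times 2$ block resolvent of the free operator. These are exactly the estimates executed in \cite{DM15}, Section~2.4, and \cite{DM21}, Lemma~6, on which the present statement relies.
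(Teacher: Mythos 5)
Your proposal is correct and follows essentially the same route as the paper, which does not prove Lemma~\ref{basic} itself but defers to \cite[Section 2.4]{DM15} and \cite[Lemma 6]{DM21}, where exactly this Lyapunov--Schmidt reduction onto the resonant pair $\{+n,-n\}$, with Neumann-series inversion of the off-resonant block controlled by small-divisor sums, is carried out. The only point worth flagging is that the ``only solutions'' clause also needs the standard localization statement that all $Per^\pm$-eigenvalues other than $\lambda_n^\pm$ lie outside the disc $|z|\le n/4$ (resp.\ $|z|\le 1/2$), which you use implicitly and which is likewise part of the cited framework.
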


The functionals $\alpha_n (v;z) $ and $\beta^\pm_n (v;z)$ are well
defined by explicit expressions in terms of the Fourier coefficients
of the potential (see \cite{DM15}, formulas (2.16)--(2.33) for Hill
operators, and formulas (2.71)--(2.80) for Dirac operators). In the
sequel, for convenience we suppress the dependence on $v$ in the
notations and write only $\beta^{\pm}_n (z), \alpha_n (z).$

The asymptotic behavior of $\beta^{\pm}_n (z)$ (or $\gamma_n$ and
$\delta_n$) plays also a crucial role in studying the Riesz
basis property of the system of root functions of the operators
$L_{Per^\pm}.$ In \cite[Section 5.2]{DM15}, it is shown (for
potentials $v \in L^2 ([0,\pi])$) that if the ratio $\beta^+_n
(z_n^*)/\beta^-_n (z_n^*) $ is not separated from $0$ or $\infty$
then the system of root functions of $L_{Per^\pm}$ does not contain
a Riesz basis (see Theorem 71 and its proof therein). Theorem 1 in
\cite{DM25} (or Theorem 2 in \cite{DM25a}) gives, for wide classes
of $L^2$-potentials,  the following criterion for Riesz basis
property.

\begin{Criterion}
\label{crit1}  Consider the Hill operator with  $v \in L^2
([0,\pi]). $ If
\begin{equation}
\label{a1} \beta_n^+ (0) \neq 0, \quad \beta_n^- (0)\neq 0
\end{equation}
and
\begin{equation}
\label{a2} \exists \, c  \geq 1 : \quad  c^{-1}|\beta_n^\pm (0)|
\leq |\beta_n^\pm (z)| \leq c \, |\beta_n^\pm (0)|, \quad  |z| \leq
1,
\end{equation}
for all sufficiently large even $n$ (if $bc= Per^+$) or odd $n$ (if
$bc= Per^-$), then

(a) there is $N=N(v) $ such that for $n>N$ the operator
$L_{Per^\pm}(v)$ has exactly two simple periodic (for even $n$) or
antiperiodic (for odd $n$) eigenvalues in the disc $D_n =\{z: |z-
n^2|<1 \} ;$

(b)  the system of root functions of $L_{Per^+} (v) $ or $L_{Per^-}
(v) $ contains a Riesz basis in $L^2 ([0,\pi])$ if and only if,
respectively,
\begin{equation}
\label{a3} \limsup_{n\in 2\mathbb{N}} t_n (0) <\infty \quad
\text{or}
 \quad \limsup_{n\in 1+2\mathbb{N}} t_n (0)<\infty,
\end{equation}
where
\begin{equation}
\label{a4} t_n (z)=\max \{|\beta_n^-(z)|/|\beta_n^+(z)|, \,
|\beta_n^+(z)|/|\beta_n^-(z)|\}.
\end{equation}
\end{Criterion}

In general form, i.e., without the restrictions (\ref{a1}) and
(\ref{a2}), this criterion is given in \cite{DM26-2} in the context
of 1D Dirac operators but the formulation and proof are the same in
the case of Hill operators (see Proposition~19 in \cite{DM28}).
Moreover, the same argument gives the following more general
statement.

\begin{Criterion}
\label{crit2} Let $\Gamma^+ =2\mathbb{N}, $ $\Gamma^-=2\mathbb{N}-1 $
in the case of Hill operators, and $\Gamma^+ =2\mathbb{Z}, $
$\Gamma^-=2\mathbb{Z}-1 $ in the case of one dimensional Dirac
operators. There exists $N_* = N_* (v)$ such that for $|n|>N_*$ the
operator $L=L_{Per^\pm}(v)$ has in the disc $D_n =\{z: |z- n^2|<n/4
\}$  (respectively $D_n =\{z: |z- n|<1/2 \}$) exactly two periodic
(for $n\in \Gamma^+$) or antiperiodic (for $n\in \Gamma^-$)
eigenvalues, counted with algebraic multiplicity. Let
    $$\mathcal{M}^\pm =\{n\in \Gamma^\pm: \; |n|
\geq N_*, \; \lambda^-_n \neq \lambda^+_n \},$$ and  let $\{u_{2n-1},
\,u_{2n} \}$ be a pair of normalized eigenfunctions associated, respectively,  with
the eigenvalues $\lambda_n^- $ and $ \lambda_n^+,$
$\; n \in \mathcal{M}^\pm.$

(a) If $\Delta \subset \Gamma^\pm, $ then the system $\{u_{2n-1},
\,u_{2n}, \; n\in \Delta \cap \mathcal{M}^\pm\} $ is a (Riesz) basis
in its closed linear span if and only if
\begin{equation}
\label{cr11} \limsup_{n\in \Delta \cap \mathcal{M}^\pm} t_n (z_n^*) <
\infty,
\end{equation}
where $z_n^* = \frac{1}{2} (\lambda^-_n + \lambda^+_n) -\lambda^0_n $
with $\lambda^0_n = n^2 $ for Hill operators and $\lambda^0_n = n $
for Dirac operators.

(b) The system of root functions of $L$ contains a Riesz basis if and
only if (\ref{cr11}) holds for $\Delta =\Gamma^\pm.$
\end{Criterion}

Another interesting abstract criterion of basisness is the following.

\begin{Criterion}
\label{crit3} The system of root functions of the operator
$L_{Per^\pm} (v)$ contains a Riesz basis in $L^2([0,\pi])$ if only
if
\begin{equation}
\label{a6} \limsup_{n \in \mathcal{M}^\pm} \frac{|\lambda_n^+-
\mu_n|}{|\lambda_n^+ - \lambda_n^-|} <\infty,
\end{equation}
where (for large enough $n$) $\mu_n $ is the Dirichlet eigenvalue
close to $n^2.$
\end{Criterion}

This criterion was given (with completely different proofs) in
\cite{GT12} for Hill operators with $L^2$-potentials and in
\cite{DM28} for Hill operators with $H^{-1}_{per}$-potentials and for
one-dimensional Dirac operators with $L^2$-potentials as well.

However, if one wants to apply Criterion~\ref{crit3} to specific
potentials $v,$ say $v(x) = a \cos 2x + b \cos 4x $ with $a,b \in
\mathbb{C},$ it is necessary first  to estimate the asymptotics of
the spectral gaps $|\gamma_n|= |\lambda_n^+ - \lambda_n^-|$ and
deviations $|\mu_n - \lambda_n^+|,$  what is by itself quite a
difficult problem.

In Section 2 we sharpen the known estimates for  the asymptotics of
the gap sequence $|\gamma_n|$ in terms of $|\beta_n^\pm (z_n^*)|$ --
see Proposition \ref{lem49} below -- and discuss the relationship
between Criterion 2 and Criterion 3.

Our goal in Section 3  and Section 4 is to find exact asymptotic
formulas for $\gamma_n $ and $\delta_n =\mu_n - \lambda_n^+$ in terms of
$\beta_n^\pm.$ Such formulas are derived under the assumption on
potentials that there is an infinite set $\Delta$ of indices $n$
such that $\beta_n^\pm (z_n^*) \neq 0$ and $\beta_n^\pm (z) \sim
\beta_n^\pm (z_n^*) $ if $|z-z_n^*| \leq 2(|\beta_n^- (z_n^*)|
+|\beta_n^+ (z_n^*)|).  $  \\
(Here and thereafter,
$ f(n) \sim g(n) $  means that $\lim f(n)/g(n) =1.$)

 A delicate analysis of the geometry of
eigenvectors corresponding to the eigenvalues $ \lambda_n^\pm $ and
$\mu_n $ for $n \in \Delta $  leads to the following asymptotic
formulas (see the precise claims in Theorems \ref{thm50} and \ref{thm51}):
If $n\in \Delta, $ then
$$
\gamma_n \sim 2\sqrt{\beta_n^- (z_n^*)} \sqrt{\beta_n^+ (z_n^*)}
\quad \text{for Hill and Dirac operators};
$$
$$
\mu_n - \lambda_n^+ \sim -\frac{1}{2}\left (\sqrt{\beta_n^-
(z_n^*)}+ \sqrt{\beta_n^+ (z_n^*)} \right )^2 \quad \text{for Hill operators};$$
$$
\mu_n - \lambda_n^+ \sim \frac{1}{2}\left (\sqrt{\beta_n^-
(z_n^*)}- \sqrt{\beta_n^+ (z_n^*)} \right )^2 \quad \text{for Dirac operators}.
$$
The above formulas are valid with appropriate choices of branches of
$\sqrt{\beta_n^- (z)}$ and $\sqrt{\beta_n^+ (z)}$ (depending on
$n$).  {\em All three formulas are new in the non-self-adjoint case.}

In the self-adjoint case, $\beta_n^- (z_n^*)= \overline{\beta_n^+
(z_n^*)}, $ so we obtain (see Corollary~\ref{cor14})
$$
\gamma_n \sim 2|\beta_n^+ (z_n^*)|  \quad \text{for Hill and Dirac operators},
$$
$$
\mu_n - \lambda_n^+ \sim -|\beta_n^+ (z_n^*)| - Re \left (\beta_n^+
(z_n^*) \right ) \quad \text{for Hill operators},
$$
$$
\mu_n - \lambda_n^+ \sim -|\beta_n^+ (z_n^*)| + Re \left (\beta_n^+
(z_n^*) \right ) \quad \text{for Dirac operators}.
$$
The formula for $\gamma_n $ is known (see \cite{DM3} for Hill
operators and \cite{DM6} for Dirac operators. However, to the best
of our knowledge the asymptotic formulas for $\mu_n -\lambda_n^+ $
{\em are new even in the self-adjoint case.}

In Section 5 and 6 we provide asymptotics for the spectral gaps and
deviations of Hill operators with potentials of the form
$v(x) = a e^{-2ix} + b e^{2ix} $   and
$v(x) = a e^{-2Rix} + b e^{2Six}. $

Section 7 is devoted to the more complicated case of Hill operators
with potentials $v(x) = a e^{-2ix} + b e^{2ix} + A e^{-4ix} + B e^{4ix}$
where $a,b, A, B $  are non-zero complex numbers.

Finally, in Section 8 we consider the Dirac operators with potentials
of the form
$v(x)= \begin{pmatrix}  0  &  P(x) \\ Q(x) &  0  \end{pmatrix}, \;\;$
$ P(x) =a e^{-2ix} + A e^{2ix}, \; Q(x) = b e^{-2ix} + B e^{2ix}$
and give formulas for their spectral gaps and
deviations.

\section{Estimates of spectral gaps and deviations}

In this section we estimate the absolute values of spectral gaps
$\gamma_n = \lambda_n^+ - \lambda_n^-$ and deviations
$\delta_n = \mu_n - \lambda_n^+$ for arbitrary potentials.
First, let us recall that $|\gamma_n |+ |\mu_n -\lambda_n^+|$ could
be estimated asymptotically in terms of $|\beta^-_n (z_n^*)|
+|\beta^+_n (z_n^*)|. $  By \cite[Theorem 66]{DM15} and
\cite[Theorem 37]{DM21}, the following holds.

\begin{Proposition}
\label{thm66} Let $L= L(v)$ be either the Hill-Schr\"odinger operator
with a $\pi$-periodic singular potential $v \in H^{-1}_{per}
(\mathbb{R}) $ or the one dimensional Dirac operator with a
$\pi$-periodic $L^2$-potential $v.$ For large enough $|n|,$ if $
\lambda^+_n , \lambda^-_n $ is the $n$-th couple of periodic (for
even $n$) or antiperiodic (for odd $n$)   eigenvalues of $L,$
$\gamma_n = \lambda^+_n - \lambda^-_n ,$ and $\mu_n $ is the $n$-th
Dirichlet eigenvalue of $L,$ then
\begin{equation}
\label{44.1}
 \frac{1}{144} \left ( |\beta^-_n (z_n^*)| +|\beta^+_n
(z_n^*)| \right ) \leq |\gamma_n | +|\mu_n - \lambda^+_n |  \leq 58
\left ( |\beta^-_n (z_n^*)| +|\beta^+_n (z_n^*)| \right ),
\end{equation}
where  $z^*_n = (z_n^+ +z_n^-)/2$ with $z_n^\pm = \lambda_n^\pm -
\lambda_n^0, $  $\lambda_n^0 = n^2 $ in the case of
Hill-Schr\"odinger operator and $\lambda_n^0 = n$ in the case of
Dirac operator.
\end{Proposition}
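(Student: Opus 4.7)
The plan is to reduce both inequalities in (\ref{44.1}) to the basic functional equation of Lemma~\ref{basic} and an analogous equation for the Dirichlet eigenvalues, exploiting the fact that $\alpha_n(z)$ and $\beta_n^\pm(z)$ are analytic on the admissible disc ($|z|\le n/4$ or $|z|\le 1/2$) with values and $z$-derivatives that are $o(1)$ as $|n|\to\infty$. Consequently, along any segment inside that disc of length $o(n)$ (respectively $o(1)$) these functionals are essentially constant, so evaluations at $z_n^\pm$ or at the Dirichlet root $\zeta_n$ may be replaced by evaluations at $z_n^*$ up to a multiplicative $(1+o(1))$ factor.

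For the upper bound on $|\gamma_n|$, I would write the basic equation at $z = z_n^\pm$ in the form $z_n^\pm - \alpha_n(z_n^\pm) = \pm\sqrt{\beta_n^-(z_n^\pm)\beta_n^+(z_n^\pm)}$ with branches chosen consistently, subtract the two identities, and use the Lipschitz control on $\alpha_n$ and $\beta_n^\pm$ to absorb the difference $\alpha_n(z_n^+)-\alpha_n(z_n^-)$ on the left and replace $z_n^\pm$ by $z_n^*$ on the right. This yields $|\gamma_n|\le 2\sqrt{|\beta_n^-(z_n^*)\beta_n^+(z_n^*)|}\,(1+o(1))$, and AM--GM then gives the bound in terms of $|\beta_n^-(z_n^*)|+|\beta_n^+(z_n^*)|$. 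For $|\mu_n-\lambda_n^+|$, I would use the analogous Lyapunov--Schmidt reduction for the Dirichlet problem (as developed in \cite[Sec.~2.4]{DM15} and \cite[Lemma~6]{DM21}), which writes $\mu_n = n^2+\zeta_n$ with $\zeta_n$ satisfying an equation of the form $\zeta = \alpha_n(\zeta) + \tfrac12(\beta_n^-(\zeta)+\beta_n^+(\zeta)) + \text{error}$. This places $\mu_n$ in the disc of radius $O(|\beta_n^-(z_n^*)|+|\beta_n^+(z_n^*)|)$ centred at $n^2+\alpha_n(z_n^*)$, and similarly for $\lambda_n^\pm$, giving the upper estimate with an explicit (possibly not optimal) constant of order~$58$.

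The lower bound is the delicate direction. The key observation is that the two eigenvalue equations provide complementary information: from the periodic equation, $\lambda_n^\pm - (n^2+\alpha_n(z_n^*)) \approx \pm\sqrt{\beta_n^-(z_n^*)\beta_n^+(z_n^*)}$, so $|\gamma_n|$ is comparable to the \emph{geometric} mean of $|\beta_n^\pm(z_n^*)|$; from the Dirichlet equation, $\mu_n - (n^2+\alpha_n(z_n^*)) \approx \tfrac12(\beta_n^-(z_n^*)+\beta_n^+(z_n^*))$, so $|\mu_n - \lambda_n^+|$ is comparable to the difference between the \emph{arithmetic} mean and a square root of the product. I would split into two regimes: if the two quantities $|\beta_n^\pm(z_n^*)|$ are of comparable size, then AM and GM coincide up to a constant and $|\gamma_n|$ alone controls $|\beta_n^-(z_n^*)|+|\beta_n^+(z_n^*)|$; if one is much larger than the other, then the geometric mean is much smaller than the arithmetic mean, so that $|\mu_n-\lambda_n^+|\gtrsim \tfrac12\max(|\beta_n^-(z_n^*)|,|\beta_n^+(z_n^*)|)$. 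In either regime $|\gamma_n|+|\mu_n-\lambda_n^+|$ is bounded below by a fixed fraction of $|\beta_n^-(z_n^*)|+|\beta_n^+(z_n^*)|$.

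The main technical obstacle is making the branch choices for $\sqrt{\beta_n^-\beta_n^+}$ and the replacements $z_n^\pm\mapsto z_n^*$, $\zeta_n\mapsto z_n^*$ quantitative enough so that the $o(1)$ corrections do not swallow the explicit constants $1/144$ and $58$. This requires uniform bounds on $|\partial_z\alpha_n|$ and $|\partial_z\beta_n^\pm|$ on the admissible disc, which in turn are obtained from the explicit Fourier series expressions for these functionals \cite[formulas (2.16)--(2.33) and (2.71)--(2.80)]{DM15}. Once these Lipschitz-type estimates are in hand, the arithmetic of the two regimes is routine and the stated constants emerge by straightforward bookkeeping.
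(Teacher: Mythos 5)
First, be aware that the paper gives no proof of Proposition~\ref{thm66}: it is quoted from \cite[Theorem 66]{DM15} and \cite[Theorem 37]{DM21}, so the only in-paper material to compare with is the closely related Proposition~\ref{lem49}. Measured against what is actually needed, your sketch has the right skeleton. The upper bound for $|\gamma_n|$ via the basic equation \eqref{p2} at $z=z_n^\pm$, the Lipschitz control of the type \eqref{32.1}--\eqref{32.2}, and AM--GM is literally the derivation of \eqref{32.4a} in the proof of Proposition~\ref{lem49}; and your dichotomy for the lower bound (if $|\beta_n^-(z_n^*)|\asymp|\beta_n^+(z_n^*)|$ the geometric mean, hence $|\gamma_n|$, already controls the sum; otherwise the deviation must pick up the arithmetic mean) is the correct mechanism behind the two-sided estimate.

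However, there is a genuine gap at the core. Everything you say about $\mu_n$ rests on the asserted scalar Dirichlet reduction $\zeta=\alpha_n(\zeta)+\tfrac12\bigl(\beta_n^-(\zeta)+\beta_n^+(\zeta)\bigr)+\mathrm{error}$ with an error that is $o\bigl(|\beta_n^-(z_n^*)|+|\beta_n^+(z_n^*)|\bigr)$ --- and this is precisely the hard part, not a citation. The sources you point to (\cite[Section~2.4]{DM15}, \cite[Lemma~6]{DM21}) construct the $2\times2$ reduction for $Per^\pm$, i.e.\ Lemma~\ref{basic}; they do not express the Dirichlet eigenvalue through the same functionals $\alpha_n,\beta_n^\pm$, and no Lipschitz bound on $\partial_z\alpha_n,\partial_z\beta_n^\pm$ can produce such an identity, since those bounds only let you move the evaluation point in $z$, not relate the Dirichlet Neumann series to the periodic one. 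The \emph{multiplicative} (not additive) smallness of the error is essential in your unbalanced regime, and note that the paper itself obtains the corresponding asymptotics \eqref{500.4} only under the extra hypotheses \eqref{50.1}--\eqref{50.2} plus a non-clustering condition, and by a different device: the eigenfunction identity \eqref{51.10}, the evaluation $\xi=-\rho(z_n^*)(1+o(1))$ in \eqref{51.40}, and the explicit coordinates \eqref{51.21}--\eqref{51.23}. That identity (or an honest term-by-term comparison of the Dirichlet and periodic reductions) is the ingredient your proposal must supply before the ``routine bookkeeping'' can begin; without it the lower bound in \eqref{44.1} is not established.
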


The following proposition is a modification of Lemma~49 in
\cite{DM15}. It gives a two-sided estimate for $|\gamma_n|$ in terms
of the expressions $t_n (z_n^*)$ defined in (\ref{a4}), while
Lemma~49 in \cite{DM15} gives only a lower estimate of $|\gamma_n| $
in terms of $t_n (z_n^+).$ The proof is similar but for convenience
of the reader we provide all details.

\begin{Proposition}
\label{lem49} Suppose the assumptions of Proposition~\ref{thm66}
hold. If the set $\mathcal{M} = \{n: \lambda_n^+ \neq \lambda_n^-
\}$ is infinite, then there exist $N^* \in \mathbb{N} $ and a
sequence of positive numbers $\eta_n \to 0 $ such that, for $n \in
\mathcal{M} $ with $|n|>N^*,$ we have
\begin{equation}
\label{32.01} \frac{2\sqrt{t_n}}{1+t_n} -\eta_n
 \leq
\frac{|\gamma_n |}{ |\beta^-_n (z_n^*)| +|\beta^+_n (z_n^*)|} \leq
\frac{2\sqrt{t_n}}{1+t_n} +\eta_n ,
\end{equation}
where
\begin{equation} \label{32.02} t_n = \begin{cases}
\max(|\beta_n^- (z_n^*)| /|\beta_n^+ (z_n^*)|, \, |\beta_n^+
(z_n^*)| /|\beta_n^- (z_n^*)|) \;\; &\text{if}\;\; \beta_n^-
(z_n^*)\beta_n^+ (z_n^*) \neq 0,\\
\infty \;\; &\text{if}\;\; \beta_n^- (z_n^*)\beta_n^+ (z_n^*) = 0
\end{cases}
\end{equation}
and $\frac{2\sqrt{t}}{1+t}=0$  for $t =\infty. $

\end{Proposition}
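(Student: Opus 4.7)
The plan is to extract $\gamma_n$ directly from the basic equation~(\ref{p2}) and then compare $|\gamma_n|$ to the scale $R_n := |\beta_n^-(z_n^*)| + |\beta_n^+(z_n^*)|$. Since $z_n^\pm$ are the two solutions of $(z-\alpha_n(z))^2 = \beta_n^-(z)\beta_n^+(z)$, and $|z_n^\pm - z_n^*| = |\gamma_n|/2 \leq 29\, R_n$ by Proposition~\ref{thm66}, I would fix a branch of the square root on a small disc around $z_n^*$ containing both $z_n^\pm$ and label the eigenvalues so that
\[
z_n^+ - \alpha_n(z_n^+) = +\sqrt{\beta_n^-(z_n^+)\beta_n^+(z_n^+)}, \qquad z_n^- - \alpha_n(z_n^-) = -\sqrt{\beta_n^-(z_n^-)\beta_n^+(z_n^-)}.
\]
Subtracting these two identities gives the key representation
\[
\gamma_n = \bigl[\alpha_n(z_n^+) - \alpha_n(z_n^-)\bigr] + \sqrt{\beta_n^-(z_n^+)\beta_n^+(z_n^+)} + \sqrt{\beta_n^-(z_n^-)\beta_n^+(z_n^-)}.
\]

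Next I would replace $z_n^\pm$ by $z_n^*$ inside $\alpha_n$ and $\beta_n^\pm$. The explicit Fourier formulas for $\alpha_n,\beta_n^\pm$ from \cite{DM15,DM21}, combined with Cauchy's integral formula on a larger disc (of radius of order $n$ for Hill or of order $1$ for Dirac), yield the uniform derivative bounds $\sup|\alpha_n'(z)|$, $\sup|(\beta_n^\pm)'(z)| = o(1)$ as $|n|\to\infty$ on $\{|z - z_n^*| \leq 30\, R_n\}$. Together with $|z_n^\pm - z_n^*| \leq 29\, R_n$, this forces $\alpha_n(z_n^+) - \alpha_n(z_n^-) = o(R_n)$ and $\beta_n^\pm(z_n^\pm) = \beta_n^\pm(z_n^*) + o(R_n)$, hence
\[
\gamma_n = 2\sqrt{\beta_n^-(z_n^*)\beta_n^+(z_n^*)} + o(R_n).
\]
Taking moduli and using the elementary identity $\tfrac{2\sqrt{|a||b|}}{|a|+|b|} = \tfrac{2\sqrt{t}}{1+t}$ with $t = \max(|a|/|b|,|b|/|a|)$, I obtain
\[
\frac{|\gamma_n|}{R_n} = \frac{2\sqrt{t_n}}{1+t_n} + o(1) \qquad (n\in \mathcal{M},\ |n|\to\infty),
\]
with the convention $2\sqrt{t}/(1+t) = 0$ at $t=\infty$ covering the degenerate case $\beta_n^-(z_n^*)\beta_n^+(z_n^*) = 0$; note that $R_n > 0$ on $\mathcal{M}$ by Proposition~\ref{thm66}. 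Setting $\eta_n$ equal to the absolute value of the $o(1)$ remainder yields the required sequence satisfying~(\ref{32.01}).

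The main obstacle is the regularity step: even when $R_n\to 0$ along subsequences, the variations of $\alpha_n$ and $\sqrt{\beta_n^-\beta_n^+}$ between $z_n^\pm$ and $z_n^*$ must be $o(R_n)$, not merely $o(1)$. This forces one to use the sharper Cauchy-type derivative bounds for $\alpha_n,\beta_n^\pm$ on the full admissible disc (radius $n/4$ for Hill, $1/2$ for Dirac), which are precisely those developed in \cite[Sec.~2.4]{DM15} and \cite[Lem.~6]{DM21}. Once those uniform-in-$n$ bounds are in hand, the branch choice used in the first step is reconciled globally by the continuity of $z\mapsto \beta_n^-(z)\beta_n^+(z)$ on the relevant disc for sufficiently large $|n|$.
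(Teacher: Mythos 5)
There is a genuine gap at the very first step. You posit a single analytic branch of $\sqrt{\beta_n^-(z)\beta_n^+(z)}$ on a disc containing both $z_n^\pm$ and assert that the two roots of the basic equation can be ``labelled'' so that $z_n^+-\alpha_n(z_n^+)=+\sqrt{\cdots}$ and $z_n^--\alpha_n(z_n^-)=-\sqrt{\cdots}$. Two problems. First, Proposition~\ref{lem49} makes no nondegeneracy assumption: the case $\beta_n^-(z_n^*)\beta_n^+(z_n^*)=0$ is explicitly allowed ($t_n=\infty$), and more generally the product $\beta_n^-\beta_n^+$ may vanish somewhere in the disc, in which case no continuous branch of its square root exists there and your ``key representation'' of $\gamma_n$ cannot even be written down. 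Second, even when a branch exists, the opposite-sign assertion is not a matter of labelling --- the labels of $z_n^\pm$ are already fixed by $\lambda_n^\pm$, and whether the two roots pick up opposite signs relative to one branch is precisely the nontrivial quantitative fact one must prove (if both roots satisfied the same sign equation, your subtraction would give $\sqrt{\cdots}-\sqrt{\cdots}=o(R_n)$ instead of $2\sqrt{\cdots}$, destroying the lower bound in (\ref{32.01})). Appealing to ``continuity of $\beta_n^-\beta_n^+$'' does not resolve this. In the paper this sign statement is only established in Theorem~\ref{thm50}, under the additional hypotheses (\ref{50.1})--(\ref{50.2}), which are not available here.

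The paper's proof is designed exactly to avoid this branch issue. Setting $\zeta_n^\pm=z_n^\pm-\alpha_n(z_n^\pm)$, it never extracts a square root of the analytic function; instead it factors
\begin{equation*}
(\zeta_n^+)^2-(\zeta_n^-)^2=(\zeta_n^+-\zeta_n^-)(\zeta_n^++\zeta_n^-)
=\int_{[z_n^-,z_n^+]}\frac{d}{dz}\bigl[\beta_n^+(z)\beta_n^-(z)\bigr]\,dz ,
\end{equation*}
bounds the right-hand side by $3\epsilon_n R_n|\gamma_n|$ using the derivative estimates, and combines this with the lower bound $|\zeta_n^+-\zeta_n^-|\geq(1-\epsilon_n)|\gamma_n|>0$ (valid since $n\in\mathcal{M}$) to conclude $|\zeta_n^++\zeta_n^-|\leq 4\epsilon_n R_n$ --- the rigorous substitute for your sign claim. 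It then estimates $|\zeta_n^+|=\sqrt{|\beta_n^+(z_n^+)\beta_n^-(z_n^+)|}$, a square root of a nonnegative number only, and uses $|\sqrt{a}-\sqrt{b}|\leq\sqrt{|a-b|}$ to pass to $z_n^*$; this also handles the degenerate case $t_n=\infty$ without any special pleading. The remaining ingredients of your outline --- the Cauchy-type bounds $\epsilon_n\to 0$, the replacement of $z_n^\pm$ by $z_n^*$ at cost $o(R_n)$, and the identity $2\sqrt{|a||b|}/(|a|+|b|)=2\sqrt{t}/(1+t)$ --- do match the paper, so repairing your argument amounts to replacing the branch-and-sign step by the difference-of-squares factorization above.
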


{\em Remark.} By (\ref{44.1}), if  $\beta_n^+ (z_n^*)=\beta_n^-
(z_n^*)=0,$ then $\lambda_n^+ = \lambda_n^-. $  Therefore, for $n
\in \mathcal{M},\;$   $|\beta^-_n (z_n^*)| +|\beta^+_n (z_n^*)|>0 $
for large enough $|n|.$

\begin{proof}
Let $\mathcal{D}_n = \{z: |z-z_n^*| \leq |\gamma_n|\},$ and let
\begin{equation}
\label{32.1} \epsilon_n= \sup_{\mathcal{D}_n} \left
|\frac{\partial \alpha_n}{\partial z } \right|+ \sup_{\mathcal{D}_n} \left |\frac{\partial
\beta_n^-}{\partial z }\right| + \sup_{\mathcal{D}_n} \left
|\frac{\partial \beta_n^+}{\partial z }\right|.
\end{equation}
Then $\epsilon_n \to 0$    (see \cite[Lemma 29]{DM15} for
Hill operators with $L^2$-potentials or  (4.32) in \cite[Proposition~15]{DM21} for
Hill operators with singular potentials, and (2.109) in
\cite[Proposition~35]{DM15} for Dirac operators with $L^2$-potentials).

Let $[w,z]$ denote the line path from $w$ to $z.$  Since
$$ \beta_n^\pm (w) - \beta_n^\pm (z_n^*) = \int_{[w,z_n^*]}
\frac{d}{d z} (\beta_n^\pm (z)  ) d z, $$
 (\ref{32.1}) implies that
$$ |\beta_n^\pm (w) - \beta_n^\pm (z_n^*) | \leq \epsilon_n
|w-z_n^*|  \quad \text{for} \quad w\in \mathcal{D}_n. $$ Therefore,
for $ w\in \mathcal{D}_n $ we have
\begin{equation}
\label{32.2} |\beta_n^\pm (z_n^*)| - \epsilon_n |w-z_n^*| \leq
 |\beta_n^\pm (w) | \leq |\beta_n^\pm
(z_n^*)| + \epsilon_n |w-z_n^*|.
\end{equation}
In an analogous way, by (\ref{32.1}) we infer that
\begin{equation}
\label{32.20} |\alpha_n(z_n^+) - \alpha_n (z_n^-) | \leq \epsilon_n
|z_n^+ -z_n^-|.
\end{equation}

Set
\begin{equation}
\label{32.21} \zeta_n^+  = z_n^+ - \alpha_n (z_n^+), \quad \zeta_n^-
= z_n^- - \alpha_n (z_n^-).
\end{equation}
By (\ref{32.20}), the triangle inequality implies that
\begin{equation}
\label{32.3} (1-\epsilon_n) |z_n^+ - z_n^-| \leq |\zeta_n^+ -
\zeta_n^-| \leq (1+ \epsilon_n) |z_n^+ - z_n^-|,
\end{equation}

On the other hand, by (\ref{p2}) we have
\begin{equation}
\label{32.3a} (\zeta_n^+)^2 = \beta_n^+ (z_n^+) \beta_n^- (z_n^+),
\quad (\zeta_n^-)^2 = \beta_n^+ (z_n^-) \beta_n^- (z_n^-),
\end{equation}
and therefore,
$$
|\zeta_n^\pm| = \sqrt{|\beta_n^+ (z_n^\pm) \beta_n^- (z_n^\pm)|}
\leq \frac{1}{2}(|\beta_n^+ (z_n^\pm)|+ |\beta_n^- (z_n^\pm)|).
$$
Taking into account that $\gamma_n = z_n^+ - z_n^-$ and $|z_n^\pm -
z_n^*| = |\gamma_n|/2,$ we obtain by (\ref{32.2}) and (\ref{32.3})
that
$$
(1-\epsilon_n) |\gamma_n| \leq |\zeta_n^+ - \zeta_n^-| \leq
|\zeta_n^+| + |  \zeta_n^-| \leq |\beta_n^+ (z_n^*)| + |\beta_n^-
(z_n^*)| + \epsilon_n |\gamma_n|.
$$
Since $\epsilon_n \to 0, $ it follows that for large enough $|n|$
\begin{equation} \label{32.4a}
 |\gamma_n| \leq 2(|\beta_n^+ (z_n^*)| + |\beta_n^-
(z_n^*)|).
\end{equation}

In view of (\ref{32.3a}), we have
\begin{equation}
\label{32.4}  (\zeta_n^+)^2 - (\zeta_n^-)^2 = \int_{[z_n^-,z_n^+]}
\frac{d}{dz} [\beta_n^+ (z) \beta_n^- (z)] dz.
\end{equation}
By (\ref{32.1}), (\ref{32.2}) and (\ref{32.4a}) it follows, for
$z\in [z_n^-,z_n^+],$ that
\begin{align}
\label{32.4b} \left | \frac{d}{dz} [\beta_n^+ (z) \beta_n^- (z)]
\right | &\leq \epsilon_n \left ( |\beta_n^+ (z_n^*)|+|\beta_n^-
(z_n^*)| + \epsilon_n |z_n^+ - z_n^- | \right )\\
 \nonumber &\leq
3\epsilon_n  \left ( |\beta_n^+ (z_n^*)|+|\beta_n^- (z_n^*)| \right
).
\end{align}
Since $\epsilon_n \to 0, \; $  (\ref{32.3}) and (\ref{32.4}) imply,
for large enough $|n|,$ that
\begin{align*}
|\zeta_n^+ +\zeta_n^-| \cdot |\zeta_n^+ - \zeta_n^-| &\leq
3\epsilon_n
 \left ( |\beta_n^+
(z_n^*)|+|\beta_n^- (z_n^*)| \right ) |z_n^+ - z_n^- | \\ &\leq
4\epsilon_n \left ( |\beta_n^+ (z_n^*)|+|\beta_n^- (z_n^*)| \right )
|\zeta_n^+ - \zeta_n^-|,
\end{align*}
so we conclude that for large enough $|n|$
\begin{equation}
\label{32.5} |\zeta_n^+ +\zeta_n^-| \leq 4\epsilon_n  \left (
|\beta_n^+ (z_n^*)|+|\beta_n^- (z_n^*)| \right ).
\end{equation}

On the other hand, from (\ref{32.4b}) it follows that
$$
|\beta_n^-(z_n^+)\beta_n^+(z_n^+) -
\beta_n^-(z_n^*)\beta_n^+(z_n^*)| \leq 3\epsilon_n
 \left ( |\beta_n^+
(z_n^*)|+|\beta_n^- (z_n^*)| \right ) |z_n^+ - z_n^*|.
$$
Since $|z_n^+ - z_n^*|= \frac{1}{2}|\gamma_n|, $   by (\ref{32.4a})
we obtain
\begin{equation}
\label{32.50} |\beta_n^-(z_n^+)\beta_n^+(z_n^+) -
\beta_n^-(z_n^*)\beta_n^+(z_n^*)| \leq  3\epsilon_n  \left (
|\beta_n^+ (z_n^*)|+|\beta_n^- (z_n^*)| \right )^2.
\end{equation}

In view of (\ref{32.02}) and (\ref{32.3a}), (\ref{32.50}) implies
\begin{equation}
\label{32.6}  \frac{t_n}{(1+t_n)^2}- 3\epsilon_n
 \leq
\frac{|\zeta_n^+|^2}{\left ( |\beta_n^+ (z_n^*)|+|\beta_n^- (z_n^*)|
\right )^2} \leq  \frac{t_n}{(1+t_n)^2}+ 3\epsilon_n,
\end{equation}
which leads to
\begin{equation}
\label{32.7}  \frac{\sqrt{t_n}}{1+t_n}- 2\sqrt{\epsilon_n}
 \leq
\frac{|\zeta_n^+|}{ |\beta_n^+ (z_n^*)|+|\beta_n^- (z_n^*)| } \leq
\frac{\sqrt{t_n}}{1+t_n}+ 2\sqrt{\epsilon_n}.
\end{equation}
Indeed, since $\sqrt{a+b} \leq \sqrt{a} + \sqrt{b}$ for $ a,b \geq
0,$ the right-hand inequality in (\ref{32.6}) implies the right-hand
inequality in (\ref{32.7}). If $\frac{\sqrt{t_n}}{1+t_n} \leq
2\sqrt{\epsilon_n},$ then the left-hand inequality in (\ref{32.7})
is trivial. If
 $\frac{\sqrt{t_n}}{1+t_n} >  2\sqrt{\epsilon_n},$ then
$$
\left (\frac{\sqrt{t_n}}{1+t_n}- 2\sqrt{\epsilon_n}\right )^2
=\frac{t_n}{(1+t_n)^2}- 4\sqrt{\epsilon_n}
\frac{\sqrt{t_n}}{1+t_n}+4\epsilon_n < \frac{t_n}{(1+t_n)^2}-
3\epsilon_n,
$$
so the left-hand inequality in (\ref{32.6}) implies the left-hand
inequality in (\ref{32.7}).

Since $$  2 |\zeta_n^+ | - |\zeta_n^+ + \zeta_n^- | \leq |\zeta_n^+
- \zeta_n^- | = |2 \zeta_n^+ - (\zeta_n^+ + \zeta_n^-)| \leq 2
|\zeta_n^+ | + |\zeta_n^+ + \zeta_n^- |, $$ from (\ref{32.5}) and
(\ref{32.7}) it follows, for large enough $n,$ that
\begin{equation*}
\label{32.8}  \frac{2\sqrt{t_n}}{1+t_n}- 8\sqrt{\epsilon_n}
 \leq
\frac{|\zeta_n^+- \zeta_n^-|}{ |\beta_n^+ (z_n^*)|+|\beta_n^-
(z_n^*)| } \leq \frac{2\sqrt{t_n}}{1+t_n}+ 8\sqrt{\epsilon_n}.
\end{equation*}
By (\ref{32.3}), the above inequalities imply (\ref{32.01}), say
with $\eta_n = 16\sqrt{\epsilon_n}+ 4 \epsilon_n.$

\end{proof}

Next we give a two-sided estimate of the ratio
\begin{equation}
\label{rn} r_n = \frac{|\mu_n- \lambda_n^+|}{|\gamma_n|}, \quad n \in
\mathcal{M},
\end{equation}
in terms of the numbers $t_n $ defined in (\ref{32.02}) under the
assumption that the set $\mathcal{M} = \{n: \; \gamma_n \neq 0\}$ is
infinite. Dividing (\ref{44.1}) by $|\gamma_n |$  and taking into
account (\ref{32.01}), we obtain,  for large enough $n\in
\mathcal{M},$ that
\begin{equation}
\label{32.11} \frac{1}{58} \left (\frac{2\sqrt{t_n}}{1+t_n} -\eta_n
\right )
 \leq
\frac{1}{ 1+r_n} \leq 144 \left (\frac{2\sqrt{t_n}}{1+t_n} +\eta_n
\right ), \quad \eta_n \to 0.
\end{equation}
Since  $t_n \geq 1, \; $      (\ref{32.11}) implies that
\begin{equation}
\label{r0} \frac{1}{58} \left (\frac{1}{\sqrt{t_n}} -\eta_n \right
)
 \leq
\frac{1}{ 1+r_n} \leq 144 \left (\frac{2}{\sqrt{t_n}} +\eta_n
\right ), \quad \eta_n \to 0,
\end{equation}
where $\frac{1}{\sqrt{t}} = 0 $  if $t=\infty.$

Suppose the sets
\begin{equation}
\label{r2} \mathcal{M}^+ = \{n \;\text{even}: \;\gamma_n \neq 0\},
\quad  \mathcal{M}^- = \{n \;\text{odd}: \;\gamma_n \neq 0\},
\end{equation}
are infinite, and set
\begin{equation}
\label{32.12} \tau^\pm = \limsup_{n\in \mathcal{M}^\pm} t_n, \quad
R^\pm = \limsup_{n\in \mathcal{M}^\pm}  r_n.
\end{equation}
By (\ref{r0}) we obtain that
$$
\frac{1}{58} \liminf_{n\in \mathcal{M}^\pm} \frac{1}{\sqrt{t_n}} \leq
\liminf_{n\in \mathcal{M}^\pm}  \frac{1}{1+r_n} \leq    288 \liminf_{n\in
\mathcal{M}^\pm} \frac{1}{\sqrt{t_n}}.
$$
Therefore, the following holds.
\begin{Proposition}
\label{prop21} In the above notations,  and under the assumptions of
Proposition~\ref{thm66}, we have
\begin{equation}
\label{r3} \tau^\pm = \infty \quad \Leftrightarrow \quad  R^\pm =
\infty
\end{equation}
and
\begin{equation}
 \label{32.14} \frac{1}{288}\sqrt{\tau^\pm} \leq 1+ R^\pm
\leq 58\sqrt{\tau^\pm} \quad \text{if} \;\; \tau^\pm <\infty.
\end{equation}

\end{Proposition}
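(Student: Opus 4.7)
The plan is to observe that essentially all the analytic work has already been done just above the statement: the two-sided bound (\ref{r0}) has been established, and taking $\liminf$ over $n \in \mathcal{M}^\pm$ has yielded
\[
\tfrac{1}{58}\,\liminf_{n\in \mathcal{M}^\pm} \tfrac{1}{\sqrt{t_n}} \;\leq\; \liminf_{n\in \mathcal{M}^\pm} \tfrac{1}{1+r_n} \;\leq\; 288\,\liminf_{n\in \mathcal{M}^\pm} \tfrac{1}{\sqrt{t_n}}.
\]
All that remains is a purely elementary translation of these liminfs of reciprocals into reciprocals of limsups, together with bookkeeping at the boundary value $\infty$.

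Concretely, I would first invoke the elementary identity that for any sequence $(a_n)$ of positive extended real numbers with the convention $1/\infty = 0$ and $1/0 = \infty$, one has
\[
\liminf_{n} \tfrac{1}{1+a_n} \;=\; \tfrac{1}{1+\limsup_{n} a_n} \quad \text{and} \quad \liminf_{n} \tfrac{1}{\sqrt{a_n}} \;=\; \tfrac{1}{\sqrt{\limsup_{n} a_n}},
\]
since $x \mapsto 1/(1+x)$ and $x \mapsto 1/\sqrt{x}$ are continuous and strictly decreasing on $[1,\infty]$ (recall that $t_n \geq 1$ by definition, and that $r_n \geq 0$). Applied to $a_n = r_n$ and $a_n = t_n$, this converts the displayed double inequality into
\[
\tfrac{1}{58}\cdot\tfrac{1}{\sqrt{\tau^\pm}} \;\leq\; \tfrac{1}{1+R^\pm} \;\leq\; \tfrac{288}{\sqrt{\tau^\pm}}.
\]

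For the finite case $\tau^\pm < \infty$, I would simply invert and rearrange the displayed chain to produce the announced inequalities $\tfrac{1}{288}\sqrt{\tau^\pm} \leq 1+R^\pm \leq 58\sqrt{\tau^\pm}$. For the equivalence (\ref{r3}), I would argue by contrapositive on each direction using the same chain: if $\tau^\pm = \infty$, then $1/\sqrt{\tau^\pm}=0$, and the right-hand inequality forces $1/(1+R^\pm) = 0$, i.e.\ $R^\pm = \infty$; conversely if $R^\pm = \infty$, then $1/(1+R^\pm) = 0$, and the left-hand inequality forces $1/\sqrt{\tau^\pm} = 0$, i.e.\ $\tau^\pm = \infty$.

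The one point requiring mild care, which I would treat as the "main obstacle", is verifying that the set $\mathcal{M}^\pm$ is infinite often enough that the liminfs are computed over a tail and that the convention $1/\sqrt{t_n}=0$ for $t_n = \infty$ is compatible with the hypothesis $n \in \mathcal{M}^\pm$ (i.e., $\gamma_n \neq 0$). This follows from the Remark after Proposition \ref{lem49}: for $n \in \mathcal{M}^\pm$ with $|n|$ large one has $|\beta_n^-(z_n^*)| + |\beta_n^+(z_n^*)| > 0$, so $t_n$ is well-defined in $[1,\infty]$ and the convention in (\ref{32.02}) applies consistently throughout the argument.
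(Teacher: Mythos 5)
Your proposal is correct and follows exactly the route the paper takes: Proposition~\ref{prop21} is presented there as an immediate consequence of the liminf chain derived from (\ref{r0}), and your elementary conversion of $\liminf$ of the decreasing continuous functions $x\mapsto 1/(1+x)$ and $x\mapsto 1/\sqrt{x}$ into the corresponding $\limsup$ quantities, followed by inversion and the boundary bookkeeping at $\infty$, is precisely the step the authors leave implicit. Nothing is missing.
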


{\em Remark.} Of course, Proposition~\ref{prop21} makes no sense if
the sets $\mathcal{M}^\pm$ are finite. If only one of these sets is
infinite,  say $\mathcal{M}^+$ is infinite but $\mathcal{M}^-$ is
finite, then (\ref{r3}) and (\ref{32.14}) hold for $\tau^+$ and
$R^+$ only.

\begin{Remark}
Proposition~\ref{prop21} shows directly why
Criterion~\ref{crit2} and Criterion~\ref{crit3} are equivalent
assertions (compare with Theorem~19   in \cite{DM28}, where this equivalence
follows from the fact that both criteria give necessary and sufficient conditions
for the root function system to contain Riesz bases).
\end{Remark}

Indeed, let us consider the case of periodic boundary
conditions. Suppose that the set $\mathcal{M}^+ $ is infinite. Then
we have $\tau^+ = \infty $ if and only if $R^+ = \infty,$ and by
Criterion~\ref{crit2} and Criterion~\ref{crit3} this is the case
when the system of periodic root functions does not contain a basis.
 On the other hand, (\ref{r3}) shows as well that $\tau^+ < \infty$ if and only
if $R^+ < \infty,$ and then the system of periodic root functions
contains a basis by the same criteria.
\bigskip

If $R^\pm < \infty,$ then in view of (\ref{32.12}) we have the
following localization of Dirichlet eigenvalues $\mu_n:$  for every
$ \eta > 0 $  there are $N^\pm (v)\in \mathbb{N}$ such that for
$n\in \mathcal{M}^+$ with $|n|
>N^+ (v)$ (in the case of periodic boundary conditions) or for
$n\in \mathcal{M}$ with $|n|
>N^- (v)$ (in the case of antiperiodic boundary conditions)
$$
|\mu_n - \lambda_n^+| \leq (R^\pm + \eta)|\gamma_n|.
$$
It is difficult to find the numbers  $R^\pm$
directly by (\ref{rn}) and (\ref{32.12})
but
Proposition~\ref{prop21}, (\ref{32.14}), gives a way to estimate
these numbers in terms of $\tau^\pm.$
\bigskip

Could one give some estimates on the localization of $\mu_n $ in the
case where $R^\pm=\infty? $ In the next section, we show  that the
answer to that question is positive under some additional assumptions
on the potential.

\section{Asymptotics of spectral gaps $\gamma_n = \lambda_n^+ - \lambda_n^-$}

Recall that close to $n^2 $ in the Hill case or close to $n$ in the Dirac case
there are exactly two periodic (for even $n$) or antiperiodic (for odd $n$)
eigenvalues (counted with multiplicity).
Here and thereafter, we denote by $\lambda_n^+ $ the eigenvalue with a
larger real part
or with a larger imaginary part if the real parts are equal.

Proposition \ref{lem49} in the previous section gives
estimates on the asymptotics of the sequence absolute values  $|\gamma_n |$
of spectral gaps.
Our goal in this section is to provide, for wide classes of potentials, an
asymptotic formula for the sequence of spectral gaps
$\gamma_n = \lambda_n^+ - \lambda_n^-$ in terms of $\beta_n^\pm (z_n^*). $

Our basic assumption on the
potentials is that there is an  infinite set $\Delta $ of even (if
$bc =Per^+$) or odd (if $bc =Per^-$) integers $n \in \mathbb{N} $ in the Hill case (or
$n \in \mathbb{Z}$ in the Dirac case) such that for $n \in \Delta$
\begin{equation}
\label{50.1}  \beta^-_n (z_n^*) \neq 0, \quad
 \beta^+_n (z_n^*) \neq 0
\end{equation}
and
\begin{equation}
\label{50.2} \beta^\pm_n (z) =\beta^\pm_n (z_n^*) (1+ O(\eta_n) )
\quad \text{if}  \quad |z-z_n^*| \leq 2(|\beta^-_n (z_n^*)|
+|\beta^+_n (z_n^*)|),
\end{equation}
where $z_n^* = \frac{1}{2}(z_n^+ + z_n^-)$
with $z_n^\pm = \lambda_n^\pm - \lambda_n^0,\;$
$\lambda_n^0 = n^2 $ for Hill operators and $\lambda_n^0 = n $
for Dirac operators, and
$(\eta_n)$ is a sequence of positive numbers such that $\eta_n
\to 0.$

There are wide classes of potentials that satisfy (\ref{50.1}) and (\ref{50.2}).
We give examples of such classes in Sections 5--9 below.

Set
\begin{equation}
\label{521}
\rho_n = 2(|\beta_n^- (z_n^*)|+|\beta_n^+ (z_n^*)|), \quad
D_n = \{z: |z-z_n^*|  \leq \rho_n\};
\end{equation}
then, for Hill or Dirac operators with $L^2 $ potentials,
\begin{equation}
\label{521a}
\rho_n  \to 0  \quad \text{as} \quad |n| \to \infty
\end{equation}
(see (2.51) in \cite[Proposition 28]{DM15}
or (2.109) in \cite[Proposition 35]{DM15}.
For Hill operators with $H^{-1}_{per}$-potentials we have
\begin{equation}
\label{521b}
\rho_n/n  \to 0  \quad \text{as} \quad n \to \infty
\end{equation}
( see (2.4) and (4.32) in \cite{DM21}).
In view of (\ref{521a}) and (\ref{521b})
it follows that
\begin{equation}
\label{522} \varepsilon_n: = \sup_{D_n} \left
|\frac{\partial \alpha_n}{\partial z } \right|+ \sup_{D_n} \left |\frac{\partial
\beta_n^-}{\partial z }\right| + \sup_{D_n} \left
|\frac{\partial \beta_n^+}{\partial z }\right| \to 0
\end{equation}
(see \cite[Lemma 29]{DM15} for
Hill operators with $L^2$-potentials or  (4.33) in \cite[Proposition~15]{DM21} for
Hill operators with singular potentials, and (2.109) in
\cite[Proposition~35]{DM15} for Dirac operators with $L^2$-potentials).

\begin{Remark}
\label{R1} Under the above assumptions, we have $\gamma_n \neq 0 $
for all but finitely many $n\in \Delta. $
\end{Remark}

\begin{proof}
By  Lemma~\ref{basic},
 $\; \lambda_n^-=\lambda_n^+ $ if and only if
the basic equation (\ref{p2}) has a double root $z= z_n^*, $ i.e., if
and only if $z_n^*$ satisfies simultaneously (\ref{p2}) and
\begin{equation}
\label{p3} 2(z- \alpha_n (z)) \left (1- \frac{d\alpha_n}{dz}  (z)
\right ) = \frac{d}{dz}(\beta_n^-\beta_n^+) (z).
\end{equation}
In view of Cauchy formula for the derivative, from (\ref{50.2}) and (\ref{521}) it
follows that
$$
\left |\frac{d}{dz}(\beta_n^-\beta_n^+) (z_n^*) \right | \leq
\frac{1}{\rho_n}|\beta_n^- (z_n^*)\beta_n^+ (z_n^*)| (1+ C \eta_n)
$$
where  $C >0$ is
an absolute constant. On the other hand, (\ref{p2}) implies that
$$ |z_n^* - \alpha_n (z_n^*) | =|\beta_n^- (z_n^*)\beta_n^+
(z_n^*)|^{1/2}, $$ so by (\ref{p3}) we infer that
$$
1- \left |\frac{d\alpha_n}{dz}  (z_n^*)\right | \leq \frac{1}{2}
\frac{|\beta_n^- (z_n^*)\beta_n^+ (z_n^*)|^{1/2}}{|\beta_n^-
(z_n^*)|+|\beta_n^+ (z_n^*)|} (1+ C \eta_n) \leq \frac{1}{4} (1+ C
\eta_n).
$$
Since $\frac{d\alpha_n}{dz}  (z_n^*) \to 0 $ and $\eta_n \to 0,$ the
latter inequality may hold for at most finitely many $n \in \Delta. $
\end{proof}

\begin{Remark}
\label{R2} If (\ref{50.1}) holds and there is a constant $c\geq 1 $
such that
\begin{equation}
\label{50.2a} \frac{1}{c}|\beta_n^+ (z_n^*)| \leq |\beta_n^- (z_n^*)|
\leq c|\beta_n^+ (z_n^*)| \quad  \text{for} \; \; n\in \Delta,
\end{equation}
then (\ref{50.2}) holds as well.
\end{Remark}

\begin{proof}
In view of (\ref{521}), (\ref{522}) and (\ref{50.1}),  if $z \in D_n$ then we have
$$
\begin{aligned}
|\beta_n^+ (z)-\beta_n^+ (z_n^*)|  &\leq \varepsilon_n
|z-z_n^*|\\
&\leq 2\varepsilon_n (|\beta_n^- (z_n^*)|+|\beta_n^+ (z_n^*)|) \\
&\leq 2\varepsilon_n (c+1)|\beta_n^+ (z_n^*)|,
\end{aligned}
$$
so $\beta_n^+  $ satisfies (\ref{50.2}) with $\eta_n = 2  (c+1)
\varepsilon_n.$   The proof is the same  for $\beta_n^-.$
\end{proof}

\begin{Theorem}
\label{thm50} Let $L (v)$ be either the Hill operator
with a potential $v \in H_{per}^{-1}([0,\pi])$
or the
one-dimensional Dirac operator with a
potential  $v= \begin{pmatrix} 0 & P
\\ Q & 0
\end{pmatrix},$  where $P,Q \in L^2 ([0,\pi]).$
Suppose that there is an infinite set $\Delta $ of even (if $bc
=Per^+$) or odd (if $bc =Per^-$) integers such that
(\ref{50.1}) and (\ref{50.2}) hold for $n \in
\Delta. $  Then, for  $n \in \Delta, $
there are analytic  branches $\sqrt{\beta_n^\pm (z)}$ of
$[\beta_n^\pm (z)]^{1/2}$
(defined in a neighborhood of $D_n \in (\ref{521})$)
such that  $ z^+_n  $ and $z_n^- $ are, respectively, the only roots of the equations
\begin{equation}
\label{51.63}  z - \alpha_n (z)= \sqrt{\beta_n^-
(z)}\sqrt{ \beta_n^+ (z)},   \quad z\in D_n,
\end{equation}
\begin{equation}
\label{51.64}    z - \alpha_n (z)= - \sqrt{\beta_n^-
(z)}\sqrt{ \beta_n^+ (z)},   \quad z\in D_n.
\end{equation}
Moreover, we have
\begin{equation}
\label{50.3} \gamma_n \sim 2\sqrt{\beta_n^- (z_n^*)} \sqrt{\beta_n^+
(z_n^*)},  \quad n\in \Delta.
\end{equation}

\end{Theorem}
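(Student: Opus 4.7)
The plan is to construct the analytic branches, reduce the basic equation \eqref{p2} to the two equations \eqref{51.63}--\eqref{51.64}, identify which of $z_n^\pm$ satisfies which equation, and finally extract the asymptotics \eqref{50.3} by subtraction.

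\textbf{Step 1: Branches and smallness estimates.} For $n \in \Delta$ large, I first verify that $\beta_n^\pm(z) \neq 0$ throughout $D_n$. By the derivative bound \eqref{522} and the definition \eqref{521} of $\rho_n$, for any $z \in D_n$
$$
|\beta_n^\pm(z)-\beta_n^\pm(z_n^*)| \le \varepsilon_n\, \rho_n \le 4\varepsilon_n\,|\beta_n^\pm(z_n^*)|\cdot\tfrac{\rho_n}{4|\beta_n^\pm(z_n^*)|},
$$
which, combined with \eqref{50.1}, shows $\beta_n^\pm$ is nonzero on a slightly larger disc; this lets me define analytic branches $\sqrt{\beta_n^\pm(z)}$ on a neighborhood of $D_n$. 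Writing $g_n(z):=\sqrt{\beta_n^-(z)}\sqrt{\beta_n^+(z)}$, Cauchy's formula applied to the hypothesis \eqref{50.2} gives $|(\beta_n^-\beta_n^+)'(z)| \le C\eta_n |\beta_n^-(z_n^*)\beta_n^+(z_n^*)|/\rho_n$ on $D_n$, and dividing by $2|g_n(z)|\asymp 2\sqrt{|\beta_n^-(z_n^*)\beta_n^+(z_n^*)|}$ yields $|g_n'(z)| = O(\eta_n)$ (using $\sqrt{|\beta_n^-\beta_n^+|}\le\rho_n/4$). Combined with $\alpha_n'(z)\to 0$ from \eqref{522}, this means the two functions
$$
F_n^\pm(z) := z - \alpha_n(z) \mp g_n(z)
$$
have $F_n^{\pm\,\prime}(z) = 1 + o(1)$ uniformly on $D_n$, so each $F_n^\pm$ is \emph{injective} on $D_n$.

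\textbf{Step 2: Locating $z_n^+$ and $z_n^-$.} By Lemma~\ref{basic}, the basic equation \eqref{p2} and Proposition~\ref{lem49} (which gives $|\gamma_n|\le 2(|\beta_n^-(z_n^*)|+|\beta_n^+(z_n^*)|)=\rho_n$) imply $z_n^\pm \in D_n$ and $(z_n^\pm - \alpha_n(z_n^\pm))^2 = \beta_n^-(z_n^\pm)\beta_n^+(z_n^\pm) = g_n(z_n^\pm)^2$. Hence each $z_n^\pm$ satisfies $F_n^{\epsilon_n^\pm}(z_n^\pm)=0$ for some sign $\epsilon_n^\pm \in\{+,-\}$. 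Remark~\ref{R1} gives $\gamma_n\neq 0$ for all but finitely many $n\in\Delta$, so $z_n^+\neq z_n^-$; by the injectivity of $F_n^\pm$ from Step~1, we cannot have $\epsilon_n^+=\epsilon_n^-$. Thus $\{\epsilon_n^+,\epsilon_n^-\}=\{+,-\}$. After replacing $\sqrt{\beta_n^+(z)}$ by $-\sqrt{\beta_n^+(z)}$ if necessary (which does not disturb the branches' analyticity), I may arrange that $z_n^+$ solves \eqref{51.63} and $z_n^-$ solves \eqref{51.64}; uniqueness of each solution in $D_n$ follows from the same injectivity.

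\textbf{Step 3: Asymptotics of $\gamma_n$.} Subtracting \eqref{51.64} from \eqref{51.63} evaluated at $z_n^+$ and $z_n^-$ respectively:
$$
(z_n^+-z_n^-) - (\alpha_n(z_n^+)-\alpha_n(z_n^-)) = \sqrt{\beta_n^-(z_n^+)\beta_n^+(z_n^+)} + \sqrt{\beta_n^-(z_n^-)\beta_n^+(z_n^-)},
$$
with the same branch of the square root on both sides (the one chosen in Step~1). The left-hand side equals $\gamma_n(1+O(\varepsilon_n))$ by \eqref{522} and $|z_n^+-z_n^-|=|\gamma_n|$. For the right-hand side, the analytic continuation of $\sqrt{\beta_n^\pm}$ from $z_n^*$ through $D_n$, together with \eqref{50.2}, gives
$$
\sqrt{\beta_n^\pm(z_n^\pm)} = \sqrt{\beta_n^\pm(z_n^*)}\,(1+O(\eta_n)),
$$
so the right-hand side is $2\sqrt{\beta_n^-(z_n^*)}\sqrt{\beta_n^+(z_n^*)}\,(1+o(1))$. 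Dividing through yields \eqref{50.3}.

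\textbf{Main obstacle.} The most delicate part is Step~2: showing that $z_n^+$ and $z_n^-$ fall on \emph{opposite} sides of the sign ambiguity in $g_n$, and not on the same one. This is where injectivity of $F_n^\pm$ together with the non-degeneracy $\gamma_n\neq 0$ (Remark~\ref{R1}) is essential; without controlling both the smallness of $g_n'$ (which in turn requires Cauchy estimates from the multiplicative hypothesis \eqref{50.2}, not just \eqref{522}) and the non-triviality of the gap, the whole assignment $z_n^+\leftrightarrow \eqref{51.63}$, $z_n^-\leftrightarrow \eqref{51.64}$ could collapse.
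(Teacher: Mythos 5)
Your proposal is correct and follows essentially the same route as the paper's own proof: branches defined from \eqref{50.2}, the basic equation \eqref{p2} split into \eqref{51.63}--\eqref{51.64}, uniqueness of each root via a Cauchy-estimate/Lipschitz bound on $z-\alpha_n(z)\mp\sqrt{\beta_n^-(z)}\sqrt{\beta_n^+(z)}$, a sign flip to fix which of $z_n^\pm$ solves which equation, and subtraction of the two equations to obtain \eqref{50.3}. The only (harmless) deviations are that your first display does not by itself yield nonvanishing of $\beta_n^\pm$ on $D_n$ (the hypothesis \eqref{50.2} gives this directly, since $\rho_n/|\beta_n^\pm(z_n^*)|$ need not be bounded), and that the Cauchy derivative estimates hold on the half-radius disc $|z-z_n^*|<\rho_n/2$ rather than on all of $D_n$ --- which suffices because $z_n^\pm$ lie in that smaller disc, exactly as in the paper's argument with its disc $D_n^*$.
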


\begin{proof}
We consider only indices $n \in \Delta. $  By (\ref{50.1}) and
(\ref{50.2}), if $n \in \Delta $ and $|n|$ is large enough, then
$$\beta_n^\pm (z) =\beta^\pm_n (z_n^*) (1+ O(\eta_n) ) \quad \text{for} \quad
z \in  D_n. $$
We set
\begin{equation}
\label{51.630} 
\sqrt{\beta_n^\pm (z)} =   \sqrt{|\beta_n^\pm (z^*_n)|} \,
e^{i\frac{\theta_n^\pm}{2}} \left (\frac{\beta_n^\pm
(z)}{\beta_n^\pm (z^*_n)}   \right )^{1/2},
\end{equation}
where $\theta_n^\pm = arg \,\beta_n^\pm (z^*_n) \in (-\pi, \pi]$  and
the third factor is defined by  the
Taylor series of $\zeta^{1/2}$
about $\zeta =1.$ Then
$\sqrt{\beta_n^- (z)}$ and $ \sqrt{\beta_n^+ (z)}$ are well-defined
analytic functions in a neighborhood of the closed disc
$D_n, $  so the basic equation (\ref{p2}), that is
$$
 (z-\alpha_n (v;z))^2=  \beta^-_n (v;z)\beta^+_n (v;z),
$$
splits into the equations  (\ref{51.63}) and (\ref{51.64}).

Next we show, for large enough $|n|,$  that each of the equations
(\ref{51.63}) and (\ref{51.64}) has exactly one root in the disc $
D_n.$ Consider the function
\begin{equation}
\label{50.620}
\varphi_n (z) = z-\alpha_n (z) -  \sqrt{\beta_n^- (z)}\sqrt{ \beta_n^+ (z)},
\quad z\in D_n.
\end{equation}
By (\ref{50.2}) and (\ref{521}), we have
$$
\sup \left \{\left |\sqrt{\beta_n^- (z) }\sqrt{\beta_n^+ (z)}\right |: \;
z\in D_n \right \} \leq \sqrt{|\beta_n^- (z_n^*)|}\sqrt{|\beta_n^+ (z_n^*)|}
\; (1+ C \eta_n),
$$
where $C>0$ is an absolute constant. Therefore, from the Cauchy
formula for the derivative it follows, for  $ z\in D^*_n:= \{z: \, |z-z_n^*| < \rho_n/2\}, $ that
\begin{align}
\label{50.630}
 \left |\frac{d}{dz}\left (\sqrt{\beta_n^- (z)}\sqrt{ \beta_n^+ (z)}\right )\right | & \leq
 \frac{2}{\rho_n}\sqrt{|\beta_n^- (z_n^*)|}\sqrt{ |\beta_n^+ (z_n^*)|}
\, (1+ C \eta_n)\\  \nonumber
&= \frac{\sqrt{|\beta_n^- (z_n^*)|}\sqrt{ |\beta_n^+ (z_n^*)|}}{|\beta_n^-
(z_n^*)|+ | \beta_n^+ (z_n^*)|} \, (1+ C \eta_n)\\  \nonumber
& \leq \frac{1}{2}(1+ C \eta_n).
\end{align}

By (\ref{50.620}), (\ref{522})  and (\ref{50.630}),  for large
enough $|n| $ we have
$$
|1-\varphi_n^\prime (z)| \leq \left  |\frac{d\alpha_n}{dz}(z) \right |
+ \left |\frac{d}{dz}\left (\sqrt{\beta_n^- (z)}\sqrt{ \beta_n^+ (z)}\right )\right | \leq
\frac{2}{3} \quad \text{for} \;\; z\in D^*_n.
$$
Therefore, if $z_1, z_2 \in D^*_n$ then
$$
|(z_2 -\varphi_n (z_2)) - (z_1 -\varphi_n (z_1))| =\left |
\int_{[z_1,z_2]} (1-\varphi_n^\prime (\zeta))d\zeta \right| \leq
\frac{2}{3} |z_2 -z_1|.
$$
Now the triangle inequality implies that
\begin{equation*}
\label{50.22} |\varphi_n (z_2) - \varphi_n (z_1) |  \geq \frac{1}{3} |z_2
- z_1|, \quad z_1, z_2 \in D^*_n.
\end{equation*}
Hence the equation (\ref{51.63}) has at most one root in
$D^*_n. $ Of course, the same argument shows that
the equation (\ref{51.64}) has also at most one root in
$D^*_n. $

Moreover, by (\ref{32.4a}) we
have $|\gamma_n| \leq \rho_n, $ and since $|z_n^\pm - z_n^*|=
|\gamma_n|/2$ it follows  that $z_n^-, z_n^+ \in
D^*_n.$
On the other hand,  in view of (\ref{521a}) and (\ref{521b})  Lemma~\ref{basic}
implies that $z_n^+ = \lambda_n^+ -n^2$ and $z_n^- =
\lambda_n^- - n^2 $ are the only roots of
the equation (\ref{p2}) in $D_n. $
Thus, we infer that for large enough
$|n|$  each of the equations (\ref{51.63}) and (\ref{51.64}) has
exactly one root in $D_n.$

Multiplying by $-1,$ if necessary, the expression in (\ref{51.630})  
which defines $\sqrt{\beta_n^- (z)}, $  we can always achieve 
that $z_n^+$ is the only
root of (\ref{51.63}) in $D_n $ and $z_n^-$ is the only root of
(\ref{51.64}) in  $D_n.$    Set
\begin{equation}
\label{50.63}  \zeta_n^+ :=z_n^+ - \alpha_n (z_n^+)= \sqrt{\beta_n^-
(z_n^+)}\sqrt{ \beta_n^+ (z_n^+)},
\end{equation}
\begin{equation}
\label{50.64}  \zeta_n^- := z_n^- -\alpha_n (z_n^-)=  -
\sqrt{\beta_n^- (z_n^-)}\sqrt{ \beta_n^+ (z_n^-)}.
\end{equation}
From (\ref{50.2}) it follows that
\begin{equation}
\label{50.15} \zeta_n^+ - \zeta_n^- =
2\sqrt{\beta_n^- (z_n^*)}\sqrt{\beta_n^+ (z_n^*)} \,(1+O(\eta_n)).
\end{equation}
On the other hand, by (\ref{522}) we know that
$$ |\alpha(z_n^+) - \alpha(z_n^-)| \leq \varepsilon_n
|z_n^+ -z_n^-|,
$$
with $\varepsilon_n \to 0.$ Therefore, by (\ref{50.63}) and
(\ref{50.64}) we infer
\begin{equation}
\label{50.150} \frac{\zeta^+_n - \zeta_n^-}{z_n^+ -z_n^-}= 1 -
\frac{\alpha(z_n^+) - \alpha(z_n^-)}{z_n^+ -z_n^-} = 1+
O(\varepsilon_n).
\end{equation}
Obviously, (\ref{50.15}) and (\ref{50.150}) imply (\ref{50.3}).

\end{proof}

\section{Asymptotics of deviations $\mu_n- \lambda_n^+$}

Under the assumptions (\ref{50.1}) and (\ref{50.2}), if $n \in
\Delta$ and  $n$ (or $|n|$) is sufficiently large, then the
Hill--Schr\"odinger (or Dirac) operator  has close to
$\lambda^0_n= n^2$ (respectively, $\lambda^0_n= n$)  one simple
Dirichlet eigenvalue $\mu_n $ and two (periodic for even $n,$ and
anti--periodic for odd $n$) simple eigenvalues $\lambda_n^-$ and $
\lambda_n^+ $ (see Remark~\ref{R1}).
In the sequel we fix an  $n\in \Delta $ and suppress the dependence
on $n$ in the notations (i.e., we write $\mu, \lambda^-, \lambda^+ $
instead of $\mu_n, \lambda_n^-, \lambda_n^+ $ etc). Moreover,
below $L_{Dir}$ denotes the Hill or Dirac operator with Dirichlet boundary conditions
but we write only  $L$ for the Hill or Dirac operator with periodic or antiperiodic
 boundary conditions.

Let $g, f, h $ be unit eigenfunctions corresponding to $\mu,
\lambda^+, \lambda^-, $ i.e.,
\begin{equation}
\label{41.0} L_{Dir} g = \mu g, \quad L f = \lambda^+ f,
\quad Lh = \lambda^- h.
\end{equation}
We denote by $$P_{Dir} =\frac{1}{2\pi i} \int_{|z-n^2|=r_n}
(z-L_{Dir})^{-1},    \quad P =\frac{1}{2\pi i} \int_{|z-n^2|=r_n}
(z-L)^{-1} $$   the corresponding Cauchy--Riesz projections of
$L_{Dir}$ and $L,$ and denote by $S$  and $E$ the ranges of
$P_{Dir} $ and $P.$ Of course, $S$ is the one-dimensional subspace
generated by  $g,$ and $E$ is the two-dimensional subspace generated
by $f$ and $ h.$

Let $P_{Dir}^0$  and $P^0$  be the Cauchy--Riesz projections of the
free operator $L^0$ corresponding to the eigenvalue $\lambda^0,$
where $\lambda^0 = n^2$ in the Hill case and $\lambda^0 = n$ in the
Dirac case.
 It is well known (see Proposition~44 or Theorem~45 in
 \cite{DM21} for Hill operators with $H^{-1}_{per}$ potentials,
 and Proposition~19 in \cite{DM15} for Dirac operators),
 that there is a sequence of
 positive numbers $\kappa_n \to 0$ such that
\begin{equation}
\label{41.1} \|P - P^0 : \; L^2 \to L^\infty \| \leq \kappa_n, \quad
\|P_{Dir} - P_{Dir}^0 : \; L^2 \to L^\infty \|  \leq \kappa_n.
\end{equation}

The subspace $E^0 = Ran \,P^0$ has the following standard basis of
eigenvectors of $L^0 $ (corresponding to the eigenvalue $\lambda^0 =
\lambda^0_n $ of $L^0,$ where $\lambda^0_n = n^2 $ or  $n,$
respectively):
\begin{equation}
\label{41.2} e^1 (x)= e^{-inx}, \quad e^2 (x)= e^{inx}, \quad n \in
\mathbb{N},
\end{equation}
in the Hill--Schr\"odinger case, and
\begin{equation}
\label{41.3} e^1 (x)=  \begin{pmatrix} e^{-inx} \\0
\end{pmatrix}, \quad e^2 (x)= \begin{pmatrix}0\\ e^{inx} \end{pmatrix},
\quad n\in \mathbb{Z},
\end{equation}
in the Dirac case. The subspace $S^0 = Ran \,P^0_{Dir} $ is generated
by \begin{equation}
\label{41.5}
g^0 = \begin{cases}
\frac{1}{\sqrt{2}} (e^2_n - e^1_n) & \text{for Hill operators},\\
\frac{1}{\sqrt{2}} (e^1_n + e^2_n)  & \text{for Dirac operators}.
\end{cases}
\end{equation}

In order to consider  the Hill--Schr\"odinger and Dirac cases
simultaneously we set
\begin{equation}
\label{42.01} \ell_0 (y) = \begin{cases} y(0) & \text{in the
Hill--Schr\"odinger case,}\\ y_2 (0) - y_1 (0) & \text{in the Dirac
case},
\end{cases}
\end{equation}
\begin{equation}
\label{42.02} \ell_1 (h) = \begin{cases} y(\pi) & \text{in the
Hill--Schr\"odinger case,}\\ y_2 (\pi) - y_1 (\pi) & \text{in the
Dirac case},
\end{cases}
\end{equation}
where $y(x) $ is a complex--valued function in the
Hill--Schr\"odinger case, and $y(x) = \begin{pmatrix} y_1(x)\\ y_2
(x) \end{pmatrix}$ in the Dirac case. Then $y$  satisfies the
Dirichlet boundary conditions if and only if $$ \ell_0 (y) = 0, \quad
\ell_1 (y) = 0. $$

\begin{Theorem}
\label{thm51} Assume that (\ref{50.1}) and (\ref{50.2}) hold, and the
branches  $\sqrt{\beta_n^- (z)}$  and
$\sqrt{\beta_n^+ (z)}$ are chosen as in Theorem~\ref{thm50} so that
the second equalities in (\ref{50.63}) and (\ref{50.64}) hold.

(a) If $\;-1$ is not a cluster point of the sequence $\left (
\sqrt{\beta^-_n (z^*_n)} / \sqrt{\beta^+_n (z^*_n)}\right)_{n\in
\Delta},$ then in the Hill case
\begin{equation}
\label{50.4} \mu_n - \lambda_n^+ \sim  -\frac{1}{2} \left
(\sqrt{\beta^+_n (z_n^*)} + \sqrt{\beta^-_n (z_n^*)} \right )^2,
\quad n \in \Delta,
\end{equation}
and in the Dirac case
\begin{equation}
\label{50.5} \mu_n - \lambda_n^- \sim  \frac{1}{2} \left
(\sqrt{\beta^+_n (z_n^*)} + \sqrt{\beta^-_n (z_n^*)} \right )^2,
\quad  n \in \Delta.
\end{equation}

(b) If $\,1$ is not a cluster point of the sequence $\left (
\sqrt{\beta^-_n (z^*_n)}/\sqrt{\beta^+_n (z^*_n)}\right)_{n\in
\Delta},$ then in the Hill case
\begin{equation}
\label{50.8} \mu_n - \lambda_n^- \sim  -\frac{1}{2} \left
(\sqrt{\beta^+_n (z_n^*)} - \sqrt{\beta^-_n (z_n^*)} \right )^2,
\quad  n \in \Delta.
\end{equation}
and in the Dirac case
\begin{equation}
\label{50.9} \mu_n - \lambda_n^+ \sim  \frac{1}{2} \left
(\sqrt{\beta^+_n (z_n^*)} -\sqrt{\beta^-_n (z_n^*)} \right )^2,
\quad n \in \Delta.
\end{equation}

(c)  If $-1$ is not a cluster point of the sequence $\left (\beta^-_n
(z^*_n)/\beta^+_n (z^*_n)\right )_{n \in \Delta},$ then in the Hill
case
\begin{equation}
\label{500.4} \mu_n - \frac{1}{2} \left (\lambda_n^- + \lambda_n^+
\right ) \sim -\frac{1}{2} \left ( \beta^+_n (z_n^*) + \beta^-_n
(z_n^*)\right ), \quad n \in \Delta,
\end{equation}
and in the Dirac case
\begin{equation}
\label{500.5} \mu_n - \frac{1}{2} \left ( \lambda_n^- + \lambda_n^+
\right ) \sim \frac{1}{2} \left ( \beta^+_n (z_n^*) + \beta^-_n
(z_n^*) \right ), \quad n \in \Delta.
\end{equation}

\end{Theorem}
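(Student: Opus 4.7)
The plan is to prove part (c) first and then to deduce parts (a) and (b) by combining (c) with Theorem~\ref{thm50}. The identity
\[
-\tfrac{1}{2}\bigl(\sqrt{\beta_n^+(z_n^*)} + \sqrt{\beta_n^-(z_n^*)}\bigr)^2 = -\tfrac{1}{2}\bigl(\beta_n^+(z_n^*) + \beta_n^-(z_n^*)\bigr) - \sqrt{\beta_n^-(z_n^*)}\sqrt{\beta_n^+(z_n^*)}
\]
together with the decomposition $\mu_n - \lambda_n^+ = [\mu_n - \tfrac{1}{2}(\lambda_n^+ + \lambda_n^-)] - \gamma_n/2$ expresses $\mu_n - \lambda_n^+$ as the sum of two quantities asymptotic, by (c) and Theorem~\ref{thm50}, to $-\tfrac{1}{2}(\beta_n^+ + \beta_n^-)$ and $-\sqrt{\beta_n^-}\sqrt{\beta_n^+}$ respectively. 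The non-degeneracy hypothesis in (a)---that $-1$ is not a cluster point of $\sqrt{\beta_n^-}/\sqrt{\beta_n^+}$---is precisely what guarantees that $|\sqrt{\beta_n^+}+\sqrt{\beta_n^-}|^2$ dominates the combined $o(|\beta_n^+ + \beta_n^-|) + o(|\sqrt{\beta_n^-\beta_n^+}|)$ error, so that the resulting $\sim$ asymptotic survives. Part (b) follows in the same way from (c) and Theorem~\ref{thm50} under its own hypothesis, and the Dirac versions differ only in the sign in front of $(\beta_n^+ + \beta_n^-)$ coming from (\ref{500.5}).

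For (c) itself, I would use a Rayleigh--quotient analysis facilitated by (\ref{41.1}). Let $g^*$ be a left unit eigenfunction of $L_{Dir}^*$ with eigenvalue $\overline{\mu_n}$, normalized in the bilinear pairing so that $\langle g, g^*\rangle = 1$; then $\mu_n - \lambda_n^0 = \langle (L - \lambda_n^0)g, g^*\rangle$. Because $S$ and $S^0$ are one-dimensional and $\|P_{Dir} - P_{Dir}^0\|_{L^2\to L^\infty} \leq \kappa_n$ by (\ref{41.1}), we obtain $g = c_n g^0 + O(\kappa_n)$ and $g^* = c_n^* g^{0,*} + O(\kappa_n)$ uniformly in $x$, with $c_n, c_n^*\to 1$; here $g^0$ is the standard free Dirichlet eigenfunction from (\ref{41.5}) and $g^{0,*}$ is its biorthogonal partner. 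Substituting reduces (c) to the evaluation of $\langle(L-\lambda_n^0)g^0, g^{0,*}\rangle$ modulo a controlled error.

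The main evaluation is a $2 \times 2$ matrix computation on $E^0 = \mathrm{span}(e^1, e^2)$. By the explicit Fourier representations of $\alpha_n$ and $\beta_n^\pm$ referenced after Lemma~\ref{basic}, the compressed operator $P^0(L - \lambda_n^0)P^0$ is represented in the basis $\{e^1, e^2\}$, up to a $o(\rho_n)$ perturbation obtained from (\ref{50.2}) and (\ref{522}), by the matrix
\[
M_n(z_n^*) = \begin{pmatrix}\alpha_n(z_n^*) & \beta_n^-(z_n^*)\\ \beta_n^+(z_n^*) & \alpha_n(z_n^*)\end{pmatrix}.
\]
Since $g^0$ has coordinates $\tfrac{1}{\sqrt{2}}(-1,1)$ in the Hill case and $\tfrac{1}{\sqrt{2}}(1,1)$ in the Dirac case, a direct bilinear pairing gives
\[
\langle M_n(z_n^*)\,g^0, g^{0,*}\rangle = \alpha_n(z_n^*)\mp \tfrac{1}{2}\bigl(\beta_n^+(z_n^*) + \beta_n^-(z_n^*)\bigr),
\]
with $-$ in the Hill case and $+$ in the Dirac case. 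Combined with $\tfrac{1}{2}(\lambda_n^+ + \lambda_n^-) - \lambda_n^0 \sim \alpha_n(z_n^*)$---which follows by summing (\ref{50.63}) and (\ref{50.64}) and using (\ref{32.20}) together with (\ref{50.2})---this yields (\ref{500.4}) and (\ref{500.5}).

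The main technical obstacle is quantitative: the $O(\kappa_n)$ error from the Rayleigh-quotient substitution and the $o(\rho_n)$ matrix-approximation error must be shown to be genuinely $o(|\beta_n^+(z_n^*)| + |\beta_n^-(z_n^*)|)$, which is not automatic from $\kappa_n\to 0$ and (\ref{50.2}) alone; it requires exploiting the detailed structure of the Fourier expressions for $\alpha_n, \beta_n^\pm$ from \cite{DM15}. For the subsequent deduction of (a) and (b), the error must be sharpened further to $o(|\sqrt{\beta_n^+}\pm\sqrt{\beta_n^-}|^2)$ under the stronger non-degeneracy hypotheses. Carrying out these error controls rigorously, while managing the four sign-branch cases (Hill vs.\ Dirac, and plus vs.\ minus), is where I would expect the bulk of the careful work to lie.
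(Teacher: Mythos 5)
Your plan inverts the paper's architecture: the paper proves (a) and (b) directly and then deduces (c) from them together with Theorem~\ref{thm50} (via a subsequence argument in step 8 of its proof), whereas you propose to prove (c) first and derive (a), (b) from it. The algebra connecting the three formulas is indeed the right one, and your observation that the hypothesis of (a) is exactly what makes $|\sqrt{\beta_n^+}+\sqrt{\beta_n^-}|^2$ comparable to $\rho_n=2(|\beta_n^-(z_n^*)|+|\beta_n^+(z_n^*)|)$ is correct. But there is first a logical gap: (c) is not available under the hypothesis of (a). If, say, $\sqrt{\beta_n^-(z_n^*)}/\sqrt{\beta_n^+(z_n^*)}\to i$, then $-1$ is not a cluster point of the square-root ratio, so (a) applies, yet $\beta_n^-(z_n^*)/\beta_n^+(z_n^*)\to -1$ and the hypothesis of (c) fails; moreover $\beta_n^++\beta_n^-=o(\rho_n)$ there, so the \emph{relative} asymptotics (\ref{500.4}) would give no usable additive control anyway. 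To run your reduction you need the strictly stronger additive statement $\mu_n-\frac12(\lambda_n^-+\lambda_n^+)=-\frac12(\beta_n^+(z_n^*)+\beta_n^-(z_n^*))+o(\rho_n)$ for all $n\in\Delta$ with no cluster-point hypothesis, which is not what (c) asserts.

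The more serious gap is in the proposed proof of (c) itself. The compression $P^0(L-\lambda_n^0)P^0$ equals $P^0vP^0$, whose matrix in the basis $\{e^1,e^2\}$ has off-diagonal entries given by the Fourier coefficients $V(\mp 2n)$ (first-order perturbation theory), \emph{not} by $\beta_n^\mp(z)$: the matrix of Lemma~\ref{basic} comes from the full Schur--Feshbach reduction, and the difference between $\beta_n^\pm(z)$ and $V(\pm 2n)$ is generically of order $\rho_n$, not $o(\rho_n)$. For the two-term potentials of Section 5 one has $V(\pm 2n)=0$ for large $n$ while $\beta_n^\pm\neq 0$, so the claimed $o(\rho_n)$ matrix approximation is false. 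In addition, replacing $g$ and $g^*$ by $g^0$ and $g^{0,*}$ costs additive errors of order $\kappa_n$, and $\kappa_n$ (typically polynomially small) is vastly larger than $\rho_n$ (super-exponentially small in the examples of Sections 5--8), so these errors can never be $o(\rho_n)$; this is a structural obstruction, not the technical refinement you defer. The paper's proof is designed precisely to avoid it: the exact product identity $(\mu-\lambda^+)\langle G,\tilde g\rangle=\xi\,\ell_0(f)\langle h,\tilde g\rangle$ isolates one genuinely small factor $\xi\sim-\rho(z_n^*)$, extracted from the exact Schur-reduced eigenvalue equation (\ref{51.33}) in which $\beta_n^\pm$ appear as exact matrix entries, while all remaining factors are of order one and are computed to relative precision $1+O(\kappa_n)+O(\eta_n)$; only relative errors accumulate, and they do so multiplicatively. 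An additive Rayleigh-quotient scheme cannot reproduce this.
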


\begin{proof}
The method of our proof comes from \cite{DM5}; see also \cite{DM15, DM21}
where the same method was developed for  Dirac operators and 
Hill operators  with singular potentials.

The proof  consists of several steps.
\bigskip

1. Fix $n \in \Delta$ and let $f, h, g $ be unit eigenfunctions as
in (\ref{41.0}). Fix a vector $\varphi = af+ bg \in E $ so that
\begin{equation}
\label{51.00}   \varphi  \perp f, \quad \|\varphi \|=1.
\end{equation}
Then we have $ L\varphi = a \lambda^+ f + b \lambda^- h = \lambda^+
\varphi - \gamma b h, $  that is
\begin{equation}
\label{51.01}   L\varphi =\lambda^+ \varphi - \xi h,
\end{equation}
where $\xi = \gamma b$ is a constant.

Consider the function
\begin{equation}
\label{51.02} G(x) = \ell_0 (\varphi) f(x) - \ell_0 (f) \varphi (x).
\end{equation}
Since $f$ and $\varphi $ satisfy periodic or antiperiodic boundary
conditions, it follows that $G$ satisfies $\ell_0 (G) = \ell_1 (G)=0.$
Hence, $G$ is in the domain of the operator $L_{Dir}.$

In view of (\ref{51.01}), we have
$$
LG =\ell_0 (\varphi) \lambda^+ f - \ell_0 (f) \lambda^+ \varphi +
\ell_0 (f) \xi h = \lambda^+ G +\ell_0 (f) \xi h.
$$

Let $\tilde{g}$ be a unit eigenvector of the adjoint operator
$(L_{Dir} (v))^* $ corresponding to the eigenvalue $\overline{\mu} =
\overline{\mu}_n.$ Then on one hand we have
$$
\langle LG, \tilde{g} \rangle =  \lambda^+ \langle G, \tilde{g}
\rangle + \xi \ell_0 (f) \langle h, \tilde{g} \rangle,
$$
and on the other hand
$$
\langle LG, \tilde{g} \rangle = \langle G, (L_{Dir})^* \tilde{g}
\rangle= \mu \langle G, \tilde{g} \rangle.
$$
Therefore, we obtain that
\begin{equation}
\label{51.10} (\mu - \lambda^+)\langle G, \tilde{g} \rangle =\xi
\ell_0 (f) \langle h, \tilde{g} \rangle.
\end{equation}
\bigskip

2. We use (\ref{51.10}) to find the asymptotics of $\mu - \lambda^+.
$ To this end we estimate the asymptotics of $\langle G, \tilde{g}
\rangle, \; \ell_0 (f)$  and     $\langle h, \tilde{g} \rangle $
after "replacing" $f, \, h, \, \varphi, \,\tilde{g} $ respectively
by
\begin{equation}
\label{51.12} f^0 = \frac{P^0 f}{\|P^0f\|_2}, \quad h^0 = \frac{P^0
h}{\|P^0h\|_2}, \quad \varphi^0 = \frac{P^0 \varphi}{\|P^0
\varphi\|_2}, \quad  \tilde{g}^0 = \frac{P_{Dir}^0
\tilde{g}}{\|P_{Dir}^0 \tilde{g}\|_2}.
\end{equation}
Since $S^0 $ is one-dimensional space, we have $\tilde{g}^0 =
e^{i\theta}g^0.$ Therefore, we may assume without loss of generality
that
$$\tilde{g}^0 = g^0 $$
(otherwise we can replace $\tilde{g}$ by  $e^{-i\theta}\tilde{g}$).

By (\ref{41.1}) it follows that there is an absolute constant $C>1$
such that
\begin{eqnarray}
\label{51.13a} \|f-f^0\|_2 \leq C\kappa_n, \quad \|f-f^0\|_\infty
\leq C \kappa_n, \\
\label{51.13b} \|h-h^0\|_2 \leq C\kappa_n, \quad \|h-h^0\|_\infty
\leq C \kappa_n,\\
\label{51.13c} \|\varphi-\varphi^0\|_2 \leq C\kappa_n, \quad
\|\varphi-\varphi^0\|_\infty
\leq C \kappa_n,\\
\label{51.13d} \|\tilde{g} - g^0\|_2 \leq C\kappa_n, \quad
\|\tilde{g}-g^0\|_\infty \leq C \kappa_n.
\end{eqnarray}
Next we prove (\ref{51.13a}) only because the other inequalities
follow from the same argument. Let us only mention that
$$
(L_{Dir} (v))^* = L_{Dir} (v^*),
$$
where $v^*(x)=\overline{v}(x) $ in the Hill case, and $v^*=
\begin{pmatrix}
0 & \overline{\mathcal{Q}} \\ \overline{\mathcal{P}}  & 0
\end{pmatrix}
$ if $v=
\begin{pmatrix}
0 & \mathcal{P} \\ \mathcal{Q}  & 0
\end{pmatrix}
$ in the Dirac case, so we may use (\ref{41.1}) for the Cauchy-Riesz
projections associated with the adjoint operator $(L_{Dir} (v))^*$
as well.

Obviously, if $y(x), \, x \in [0,\pi],$ is an  $L^\infty$ function (in
the Hill case), or an $L^\infty$  vector-function in the Dirac case,
then
 $$
\|y\|_2 \leq C_1 \|y\|_\infty
$$
with some absolute constant $C_1.$ By (\ref{41.1}),
$$
\|f-P^0 f\|_\infty = \|(P-P^0) f\|_\infty \leq \|P-P^0:\, L^2 \to
L^\infty\| \leq \kappa_n,
$$
so  $\|f-P^0 f\|_2 \leq C_1 \kappa_n,$  and  it follows that
$$
\|P^0f\|_2 \geq \|f\|_2- \|f-P^0f\|_2 \geq 1- C_1 \kappa_n.
$$
Therefore, in view of (\ref{51.12}), we obtain
\begin{align*}
\|f-f^0\|_\infty &\leq \|f-P^0 f\|_\infty + \|P^0 f - f^0\|_\infty\\
 &\leq \kappa_n +  (1-\|P^0f\|_2) \|f^0\|_\infty \\ & \leq
(1+ C_1 \|f^0\|_\infty) \kappa_n.
\end{align*}
Since $f^0$ is a unit vector in $E^0,$ it has the form $f^0 = f^0_1
e^1_n + f^0_2 e^2_n,   $  so the Cauchy inequality implies that $
|f^0_1| + |f^0_2| \leq \sqrt{2}.$ In view of (\ref{41.2}), we have
$\|e^1_n\|_\infty =\|e^2_n\|_\infty =1,$ and therefore
$$
\|f^0\|_\infty \leq |f^0_1| + |f^0_2| \leq \sqrt{2}.
$$
Thus, $ \|f-f^0\|_\infty \leq  (1+C_1 \sqrt{2}) \kappa_n,$ and $
\|f-f^0\|_2 \leq  C_1 (1+C_1 \sqrt{2}) \kappa_n,$ i.e.,
(\ref{51.13a}) holds.

One can easily see by (\ref{51.13a})--(\ref{51.13d}) that
\begin{eqnarray}
\label{51.14a} \ell_0 (f) = \ell_0 (f^0) + O(\kappa_n), \quad
\langle
f,\tilde{g} \rangle =\langle f^0, g^0 \rangle + O(\kappa_n),\\
\label{51.14b} \ell_0 (h) = \ell_0 (h^0) + O(\kappa_n), \quad
\langle h,\tilde{g} \rangle =\langle h^0, g^0 \rangle + O(\kappa_n),\\
\label{51.14c} \ell_0 (\varphi) = \ell_0 (\varphi^0) + O(\kappa_n),
\quad \langle \varphi,\tilde{g} \rangle =\langle \varphi^0, g^0
\rangle + O(\kappa_n).
\end{eqnarray}
\bigskip

3. By  Lemma \ref{basic}  we know that the vector
$$f^0=f^0_1 e^1 + f^0_2 e^2$$ given by (\ref{51.12}) is an
eigenvector of the matrix $\begin{pmatrix}
\alpha (z^+)  &  \beta^- (z^+)\\
\beta^+ (z^+) &  \alpha (z^+) \end{pmatrix}$ corresponding to its
eigenvalue $z^+,$ i.e.,
$$
\begin{pmatrix}
z^+ -\alpha (z^+)  &  -\beta^- (z^+)\\
-\beta^+ (z^+) & z^+ - \alpha (z^+)
\end{pmatrix}
\begin{pmatrix} f_1^0\\f_2^0
\end{pmatrix} = 0.
$$
In view of (\ref{50.63}), $z^+ - \alpha (z^+) = \sqrt{\beta^-
(z^+)}\sqrt{\beta^+ (z^+)},$ so it follows that
$$
f^0_1 \sqrt{\beta^+ (z^+)} - f_2^0 \sqrt{\beta^- (z^+)} = 0.
$$
From here we obtain (multiplying, if necessary, $f$ by an appropriate
factor of the form $e^{\theta}$) that
\begin{equation}
\label{51.21} f^0 = \begin{pmatrix} f^0_1\\f^0_2 \end{pmatrix} =
\frac{1}{\sqrt{\rho (z^+)}}
\begin{pmatrix} \sqrt{\beta^- (z^+)}\\{} \\\sqrt{\beta^+ (z^+)}
\end{pmatrix},
\end{equation}
where
\begin{equation}
\label{51.22} \rho (z) = |\beta^- (z)|+ |\beta^+ (z)|.
\end{equation}
By the same argument it follows that
\begin{equation}
\label{51.23} h^0 = \begin{pmatrix} h^0_1\\h^0_2 \end{pmatrix} =
\frac{1}{\sqrt{\rho (z^-)}} \begin{pmatrix} -\sqrt{\beta^-
(z^-)}\\{}\\ \sqrt{\beta^+ (z^-)}
\end{pmatrix}.
\end{equation}

Next we use that $\varphi \perp f$  to find asymptotically
the vector $\varphi^0 =\varphi^0_1 e^1 + \varphi^0_2 e^2 $
defined by (\ref{51.12}). Since
$\varphi^0 \in E^0, $  we have that $\varphi^0= c_1 f^0 + c_2
(f^0)^{\perp},$  where $(f^0)^{\perp} = \overline{f^0_2}e^1
-\overline{f^0_1}e^2. $  In view of (\ref{51.00}), (\ref{51.13a})
and (\ref{51.13c}) it follows that
$$
c_1 = \langle \varphi^0, f^0 \rangle = \langle \varphi^0-\varphi, f
\rangle + \langle \varphi^0, f^0 - f \rangle = O (\kappa_n).
$$
Therefore, $$  |c_2| = \sqrt{1-|c_1|^2} = 1 + O(\kappa_n).
$$
We may assume without loss of generality that $c_2 >0$ (otherwise we
can multiply $\varphi $ by $e^{-i\theta}$ with $\theta = \arg c_2).$
Therefore, we infer that
\begin{equation}
\label{51.25} \varphi^0 =\varphi^0_1 e^1 + \varphi^0_2 e^2, \quad
\varphi^0_1=\overline{f^0_2} + O(\kappa), \quad
\varphi^0_2=-\overline{f^0_1}+ O(\kappa).
\end{equation}
\bigskip

4. Now from (\ref{41.5}), (\ref{42.01}),
(\ref{51.13a})--(\ref{51.13d}), (\ref{51.14a}), (\ref{51.14c}) and
(\ref{51.25}) it follows that
\begin{equation}
\label{51.26} \ell_0 (f) = \begin{cases} f_2^0 +f_1^0 +
O(\kappa_n)   &  \text{in the Hill case},\\
f_2^0 - f_1^0 + O(\kappa_n)   &  \text{in the Dirac case},
\end{cases}
\end{equation}
\begin{equation}
\label{51.27} \langle f, \tilde{g} \rangle = \begin{cases}
\frac{1}{\sqrt{2}} (f_2^0 -f_1^0) +
O(\kappa_n)   &  \text{in the Hill case},\\
 \frac{1}{\sqrt{2}} (f_2^0 +f_1^0) +
O(\kappa_n)   &  \text{in the Dirac case},
\end{cases}
\end{equation}
\begin{equation}
\label{51.270} \langle h, \tilde{g} \rangle = \begin{cases}
\frac{1}{\sqrt{2}} (h_2^0 -h_1^0) +
O(\kappa_n)   &  \text{in the Hill case},\\
 \frac{1}{\sqrt{2}} (h_2^0 +h_1^0) +
O(\kappa_n)   &  \text{in the Dirac case},
\end{cases}
\end{equation}
\begin{equation}
\label{51.28} \ell_0 (\varphi) = \begin{cases} \overline{f_2^0} -
\overline{f_1^0} +
O(\kappa_n)   &  \text{in the Hill case},\\
 -(\overline{f_2^0} +\overline{f_1^0}) +
O(\kappa_n)   &  \text{in the Dirac case},
\end{cases}
\end{equation}
and
\begin{equation}
\label{51.29} \langle \varphi, \tilde{g} \rangle = \begin{cases}
-\frac{1}{\sqrt{2}} (\overline{f_2^0} +\overline{f_1^0}) +
O(\kappa_n)   &  \text{in the Hill case},\\
 \frac{1}{\sqrt{2}} (\overline{f_2^0} -\overline{f_1^0}) +
O(\kappa_n)   &  \text{in the Dirac case}.
\end{cases}
\end{equation}
In view of the above formulas we obtain
$$
\begin{aligned}
\langle G, \tilde{g} \rangle &=
  \ell_0 (\varphi)
\langle f, \tilde{g}
\rangle - \ell_0 (f) \langle \varphi, \tilde{g} \rangle\\
&=\begin{cases} (\overline{f_2^0}
-\overline{f_1^0})\frac{1}{\sqrt{2}} (f_2^0 -f_1^0)+ (f_2^0 +f_1^0)
\frac{1}{\sqrt{2}}
(\overline{f_2^0} +\overline{f_1^0}) + O(\kappa_n),\\
-(\overline{f_2^0} +\overline{f_1^0})\frac{1}{\sqrt{2}} (f_2^0
+f_1^0)- (f_2^0 - f_1^0) \frac{1}{\sqrt{2}} (\overline{f_2^0}
-\overline{f_1^0}) + O(\kappa_n),
\end{cases}
\end{aligned}
$$
where the first and second lines present the Hill and Dirac cases respectively.
Since $\|f^0\|^2 =|f_1^0|^2 +
|f_2^0|^2=1,$ one can easily see that
\begin{equation}
\label{51.30} \langle G, \tilde{g} \rangle = \begin{cases} \sqrt{2}
+O(\kappa_n) &  \text{in the Hill case},\\
-\sqrt{2} +O(\kappa_n) &  \text{in the Dirac case}.
\end{cases}
\end{equation}
\bigskip

5. By (\ref{51.01}) we have that
\begin{equation}
\label{51.31} (\lambda^+ - L) \varphi = \xi h.
\end{equation}
In the proof of Lemma~21 in \cite{DM15} it is shown that equation
\cite[(2.6)]{DM15} leads to \cite[(2.13)]{DM15}. In our notations,
this means that (\ref{51.31}) implies
\begin{equation}
\label{51.32} (\lambda - L^0 - S(\lambda)) \varphi^0 =\xi [h^0 +
P^0(1 - T)^{-1}T (h-h^0)],
\end{equation}
where the operator $ S: E^0 \to E^0 $  acts in the two-dimensional
space $E^0$ and $T=T(n)$  is a continuous operator with
$\|T\|<\frac{1}{2}$ for large enough $|n|.$  Moreover, by Section~2.2
and Section~2.4 in \cite{DM15} (see also Lemma~6 in \cite{DM21} for
the modifications needed to handle the case of singular potentials $v
\in H^{-1}_{per}$) the matrix representation of the
operator $S(\lambda) $ is $\begin{pmatrix} \alpha (z)  &  \beta^-(z) \\
\beta^+ (z) & \alpha (z)\end{pmatrix} $ with $z=\lambda - \lambda^0.$

In view of (\ref{51.13b}) we have $\|h-h^0\|= O(\kappa_n),$ so it
follows that $$ \|P^0(1 - T)^{-1}T (h-h^0)\| =O(\kappa_n).$$
Therefore, (\ref{50.63}) and (\ref{51.32}) imply that
\begin{equation}
\label{51.33}
\begin{pmatrix} \sqrt{\beta^-(z^+)}\sqrt{\beta^+(z^+)} &   -\beta^-(z^+) \vspace{1mm}
\\
-\beta^+ (z^+)  &  \sqrt{\beta^-(z^+)}\sqrt{\beta^+(z^+)} \end{pmatrix}
\begin{pmatrix}  \varphi^0_1 \vspace{1mm} \\
\varphi^0_2 \end{pmatrix} = \xi \begin{pmatrix}  h^0_1 + O(\kappa_n)
\vspace{1mm}\\  h^0_2 + O(\kappa_n) \end{pmatrix}.
\end{equation}

Assume that $|\beta^- (z^-)| \leq |\beta^+ (z^-)| $  (i.e., we
consider
 $n \in \Delta $ such that $|\beta_n^- (z_n^-)| \leq |\beta_n^+ (z_n^-)|
 $). Then by (\ref{51.23}) we have that $|h^0_2|\geq 1/\sqrt{2},$
so
$$ h^0_2 +O(\kappa_n) =h^0_2 \, (1+
O(\kappa_n)).$$ In view of (\ref{51.21})--(\ref{51.23}) and
(\ref{51.25}), the second entrees equality in the vector equation
(\ref{51.33}) implies that
\begin{equation}
\label{51.34} - \sqrt{\beta^+(z^+)} \sqrt{\rho (z^+)} (1 +  O(\kappa_n))=\xi
\frac{\sqrt{\beta^+ (z^-)}}{\sqrt{\rho (z^-)}} (1+ O(\kappa_n))
\end{equation}
By (\ref{50.2}) we have that
$$
\sqrt{\beta^+(z^\pm)} = \sqrt{\beta^+(z^*)}(1 + O(\eta_n)), \quad
\rho (z^\pm) = \rho (z^*)(1+ O(\eta_n)).
$$
Hence, by (\ref{51.34}) we infer that
\begin{equation}
\label{51.40} \xi = - \rho (z^*)(1+o(1)).
\end{equation}

In the case $|\beta^- (z^*)| \geq |\beta^+ (z^*)| $ we have
$|h^0_1|\geq 1/\sqrt{2};$ then the first entrees equality in
(\ref{51.33}) gives us that
$$
 \sqrt{\beta^-(z^+)} \sqrt{\rho (z^+)} (1 +  O(\kappa_n))= - \xi
\frac{\sqrt{\beta^- (z^-)}}{\sqrt{\rho (z^-)}} (1+ O(\kappa_n))
$$
Thus, the same argument leads to (\ref{51.40}).
\bigskip

6. By (\ref{51.26}) and (\ref{51.270}) we have that
\begin{equation}
\label{51.41} \ell_0 (f) \langle h, \tilde{g} \rangle =
\begin{cases} \frac{1}{\sqrt{2}} (f^0_2+f^0_1)(h^0_2
-h^0_1)+O(\kappa_n)
 & \text{in the Hill case},\\
\frac{1}{\sqrt{2}} (f^0_2-f^0_1)(h^0_2 + h^0_1)+ O(\kappa_n) &
\text{in the Dirac case}.
\end{cases}
\end{equation}

Taking into account (\ref{50.2}) and (\ref{51.21})--(\ref{51.23}) we
obtain that
\begin{equation}
\label{51.42} \ell_0 (f) \langle h, \tilde{g} \rangle =
\begin{cases} \frac{1}{\sqrt{2}\rho (z^*)} \left (\sqrt{\beta^+
(z^*)}
+ \sqrt{\beta^- (z^*)} \right )^2 + o(1) & \text{in the Hill case},\\
\frac{1}{\sqrt{2}\rho (z^*)} \left (\sqrt{\beta^+ (z^*)}
-\sqrt{\beta^- (z^*)} \right )^2 + o(1) &  \text{in the Dirac case}.
\end{cases}
\end{equation}

If $\,-1$ is not a cluster point of the sequence $\left (
\sqrt{\beta^-_n (z^*_n)}/\sqrt{\beta^+_n (z^*_n)}\right)_{n\in
\Delta},$ then the sequence
$$
\frac{1}{\rho (z^*)} \left (\sqrt{\beta_n^+ (z_n^*)} +
\sqrt{\beta_n^- (z_n^*)} \right )^2= \frac{\beta_n^+
(z_n^*)}{|\beta_n^+ (z_n^*)|} \frac{\left ( 1+ \sqrt{\beta_n^-
(z_n^*)}/\sqrt{\beta_n^+ (z_n^*)} \right )^2}{1+|\beta_n^-
(z_n^*)|/|\beta_n^+ (z_n^*)|}
$$
is separated from $0.$ Therefore, in the Hill case we have
\begin{equation}
\label{51.44} \ell_0 (f) \langle h, \tilde{g} \rangle =
\frac{1}{\sqrt{2}\rho (z^*)} \left (\sqrt{\beta^+ (z^*)} +
\sqrt{\beta^- (z^*)} \right )^2 (1+ o(1)).
\end{equation}
Now (\ref{51.10}), (\ref{51.30}), (\ref{51.40}) and (\ref{51.44})
imply that (\ref{50.4}) holds.

The same argument shows that if $1$ is not a cluster point of the
sequence $\left ( \sqrt{\beta^-_n (z^*_n)}/\sqrt{\beta^+_n
(z^*_n)}\right)_{n\in \Delta},$ then in the Dirac case
\begin{equation}
\label{51.45} \ell_0 (f) \langle h, \tilde{g} \rangle =
\frac{1}{\sqrt{2}\rho (z^*)} \left (\sqrt{\beta^+ (z^*)} -
\sqrt{\beta^- (z^*)} \right )^2 (1+ o(1)).
\end{equation}
This, together with (\ref{51.10}), (\ref{51.30}) and (\ref{51.40}),
implies (\ref{50.9}). \bigskip

7.  If $\,1$ and $\,-1$ are not cluster points of  $\left (
\sqrt{\beta^-_n (z^*_n)}/\sqrt{\beta^+_n (z^*_n)}\right)_{n\in
\Delta},$ then in the Hill case (\ref{50.3}) and (\ref{50.4}) imply
(\ref{50.8}). Indeed, since $\mu - \lambda^-= \mu - \lambda^+ +
\gamma$, we have
$$
\begin{aligned}
\mu_n  - \lambda_n^- & =  -\frac{1}{2} \left (\sqrt{\beta^+_n
(z_n^*)} + \sqrt{\beta^-_n (z_n^*)} \right )^2 \,\, [1+o(1)]  \\&\quad+
2\sqrt{\beta^+_n (z_n^*)}\sqrt{\beta^-_n (z_n^*)}\, [1+o(1)]\\
& =-\frac{1}{2} \left (\sqrt{\beta^+_n (z_n^*)} - \sqrt{\beta^-_n
(z_n^*)} \right )^2 (1 + A_n \, o(1)  + B_n \, o(1)),
\end{aligned}
$$
where
$$A_n =
\frac{\left (1+ \sqrt{\beta^-_n (z_n^*)}/\sqrt{\beta^+_n (z_n^*)}
\right )^2 } {\left (1- \sqrt{\beta^-_n (z_n^*)} /\sqrt{\beta^+_n
(z_n^*)} \right )^{2}}, \quad B_n = \frac{ \sqrt{\beta^-_n
(z_n^*)}/\sqrt{\beta^+_n (z_n^*)}} {\left (1- \sqrt{\beta^-_n (z_n^*)}
/\sqrt{\beta^+_n (z_n^*)} \right )^{2}}
$$
are bounded sequences. In the Dirac case, the same argument shows
that (\ref{50.3}) and (\ref{50.5}) imply (\ref{50.9}).

If  $\,-1$ is  a cluster point of  $\left ( \sqrt{\beta^-_n
(z^*_n)}/\sqrt{\beta^+_n (z^*_n)}\right)_{n\in \Delta}$ but $\,1$ is
not, then we can  prove (\ref{50.8}) and (\ref{50.9})
directly by  exchanging the roles of $f$ and $h.$  Now we fix a
vector $\varphi = af+ bh $ such that
\begin{equation}
\label{51.000}   \varphi  \perp h, \quad \|\varphi \|=1.
\end{equation}
Then
\begin{equation}
\label{51.010}   L\varphi =\lambda^- \varphi - \xi f,
\end{equation}
and it follows that
\begin{equation}
\label{51.100} (\mu -
\lambda^-)\langle G, \tilde{g} \rangle =\xi \ell_0 (h) \langle f,
\tilde{g} \rangle.
\end{equation}
Then following the steps 2 -- 6 one would obtain (\ref{50.8}) and
(\ref{50.9}). We omit the details.
\bigskip

8.  Next we prove (\ref{500.4}).  The
proof of (\ref{500.5}) (the Dirac case)
is the same.

It is enough to prove (\ref{500.4}) for every subsequence $(n_k) $ in $\Delta.$
Therefore,
we may assume that  $\left (
\sqrt{\beta^-_n (z^*_n)}/\sqrt{\beta^+_n (z^*_n)}\right)_{n\in
\Delta}$ has only one cluster point.

Suppose $\;-1$ is not a cluster point of  $\left (
\sqrt{\beta^-_n (z^*_n)}/\sqrt{\beta^+_n (z^*_n)}\right)_{n\in
\Delta}$. Then by (\ref{50.3}) and (\ref{50.4}) it follows that
$$
\mu_n - \frac{1}{2} \left (\lambda_n^- + \lambda_n^+ \right ) =
-\frac{1}{2} \left (\beta^+_n (z_n^*) + \beta^-_n (z_n^*)\right )
[1+o(1)] + B_n \, o(1),
$$
where $B_n = \sqrt{\beta^-_n (z^*_n)}\sqrt{\beta^+_n (z^*_n)}. $
Since $-1$ is not a cluster point of the sequence $(\beta^-_n
(z^*_n)/\beta^+_n (z^*_n)),$  the sequence $$
\frac{B_n}{\beta^-_n (z^*_n)+\beta^+_n (z^*_n)} =
\frac{\sqrt{\beta^-_n (z^*_n)}}{\sqrt{\beta^+_n (z^*_n)}} \left ( 1+
\frac{\beta^-_n (z^*_n)}{\beta^+_n (z^*_n)} \right )^{-1}
$$
is bounded. Hence, (\ref{500.4}) holds.

If $\;-1$ is (the only) cluster point of  $\left (
\sqrt{\beta^-_n (z^*_n)}/\sqrt{\beta^+_n (z^*_n)}\right)_{n\in
\Delta},$  then $\, 1$ is not a cluster point.
Then (\ref{500.4}) follows from (\ref{50.3}) and (\ref{50.8}).

\end{proof}

\begin{Corollary}
\label{cor14}
Suppose the assumptions of Theorem~\ref{thm51} hold and
the operator $L (v)$ is self-adjoint. Then there are branches 
$\sqrt{\beta_n^\pm (z)}$  with
\begin{equation}
\label{14.0}
\sqrt{\beta_n^- (z)} =  \overline{\sqrt{\beta_n^+ (\overline{z})}}
\end{equation}
such that Theorem~\ref{thm50} hold. Moreover,
Theorem~\ref{thm51} implies the following.

(a)  If $\; -1$ is not a cluster point of  
$\left (Re \, \beta^+_n (z^*_n)/ |\beta^+_n (z^*_n)| \right)_{n\in
\Delta},$ then in the Hill case
\begin{equation}
\label{14.1}
\mu_n - \lambda_n^+ \sim - Re\, \beta_n^+ (z_n^*)  -
|\beta_n^+ (z_n^*)|, \quad n \in \Delta,
\end{equation}
and in the Dirac case
\begin{equation}
\label{14.2}
\mu_n - \lambda_n^- \sim   Re\, \beta_n^+ (z_n^*)  + |\beta_n^+ (z_n^*)|,
\quad n \in \Delta.
\end{equation}

(b) If $\; 1$ is not a cluster point of  
$\left (Re \, \beta^+_n (z^*_n)/ |\beta^+_n (z^*_n)| \right)_{n\in
\Delta},$
 then in the Hill case
\begin{equation}
\label{14.3}
\mu_n - \lambda_n^- \sim - Re\, \beta_n^+ (z_n^*)  +
|\beta_n^+ (z_n^*)|, \quad n \in \Delta,
\end{equation}
and in the Dirac case
\begin{equation}
\label{14.4}
\mu_n - \lambda_n^+ \sim   Re\, \beta_n^+ (z_n^*)  - |\beta_n^+ (z_n^*)|,
\quad n \in \Delta.
\end{equation}

(c)  If $\;-1$ is not a cluster point of  $\left (
\beta^-_n (z^*_n)/ \beta^+_n (z^*_n)\right)_{n\in
\Delta},$  then in the Hill case
\begin{equation}
\label{14.5}
\mu_n - \frac{1}{2} (\lambda_n^- + \lambda_n^+ ) \sim -
Re\, \beta_n^+ (z_n^*), \quad n \in \Delta,
\end{equation}
and in the Dirac case
\begin{equation}
\label{14.6}
\mu_n - \frac{1}{2} (\lambda_n^- + \lambda_n^+ )  \sim
 Re\, \beta_n^+ (z_n^*), \quad n \in \Delta.
\end{equation}

\end{Corollary}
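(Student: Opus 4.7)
The plan is to derive Corollary \ref{cor14} as a direct specialization of Theorem \ref{thm51} using the conjugation symmetry enforced by self-adjointness. The first step is to record that self-adjointness ($v$ real in the Hill case, $Q=\overline{P}$ in the Dirac case), together with the explicit Fourier expressions for $\alpha_n$ and $\beta_n^\pm$ cited after Lemma \ref{basic}, gives the symmetries $\alpha_n(z)=\overline{\alpha_n(\bar z)}$ and $\beta_n^-(z)=\overline{\beta_n^+(\bar z)}$. Since the eigenvalues $\lambda_n^\pm$ and $\mu_n$ are real, $z_n^\pm$ and hence $z_n^*=(z_n^++z_n^-)/2$ are real, and evaluating the symmetry at $z_n^*$ yields $\beta_n^-(z_n^*)=\overline{\beta_n^+(z_n^*)}$; in particular $|\beta_n^-(z_n^*)|=|\beta_n^+(z_n^*)|$. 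I would then take $\sqrt{\beta_n^+(z)}$ exactly as in (\ref{51.630}) and \emph{define} $\sqrt{\beta_n^-(z)}:=\overline{\sqrt{\beta_n^+(\bar z)}}$; this is analytic, squares to $\beta_n^-(z)$, and satisfies (\ref{14.0}) by construction.

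Before appealing to Theorem \ref{thm51} one must verify that this symmetric choice really is the one required by Theorem \ref{thm50}, that is, $z_n^+$ is the root of (\ref{51.63}) and $z_n^-$ the root of (\ref{51.64}). On the real axis $\sqrt{\beta_n^-(z)}\sqrt{\beta_n^+(z)}$ equals $|\beta_n^+(z)|\ge 0$, and $\alpha_n$ is real on the reals, so (\ref{51.63}) reduces to $z-\alpha_n(z)=|\beta_n^+(z)|$ while (\ref{51.64}) reduces to $z-\alpha_n(z)=-|\beta_n^+(z)|$. The labelling convention $\lambda_n^+>\lambda_n^-$ then automatically assigns the larger root $z_n^+$ to (\ref{51.63}) and the smaller root $z_n^-$ to (\ref{51.64}), so no sign flip is needed and Theorem \ref{thm50}, hence Theorem \ref{thm51}, applies with the branches just defined.

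Write $s_n:=\sqrt{\beta_n^+(z_n^*)}=a_n+ib_n$ with $a_n,b_n\in\mathbb{R}$. Then $(s_n+\overline{s_n})^2=4a_n^2$, $(s_n-\overline{s_n})^2=-4b_n^2$, and $s_n^2+\overline{s_n}^2=2\,\Re\beta_n^+(z_n^*)$, while $a_n^2+b_n^2=|s_n|^2=|\beta_n^+(z_n^*)|$ and $a_n^2-b_n^2=\Re(s_n^2)=\Re\beta_n^+(z_n^*)$. Hence $2a_n^2=|\beta_n^+(z_n^*)|+\Re\beta_n^+(z_n^*)$ and $2b_n^2=|\beta_n^+(z_n^*)|-\Re\beta_n^+(z_n^*)$. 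Substituting these three identities into the six asymptotics (\ref{50.4})--(\ref{500.5}) of Theorem \ref{thm51} immediately produces the six formulas (\ref{14.1})--(\ref{14.6}) in matching Hill/Dirac pairs.

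It remains to reinterpret the cluster-point hypotheses. The ratio $\overline{s_n}/s_n=(a_n-ib_n)/(a_n+ib_n)$ has modulus one and real part $(a_n^2-b_n^2)/(a_n^2+b_n^2)=\Re\beta_n^+(z_n^*)/|\beta_n^+(z_n^*)|$; it equals $-1$ iff $a_n=0$ and $+1$ iff $b_n=0$, which is exactly the translation of the hypotheses of Theorem \ref{thm51}(a),(b) into those of parts (a) and (b) stated here. Analogously $\beta_n^-(z_n^*)/\beta_n^+(z_n^*)=\overline{\beta_n^+(z_n^*)}/\beta_n^+(z_n^*)$ has unit modulus and equals $-1$ iff $\beta_n^+(z_n^*)$ is purely imaginary, giving part (c). The one place requiring care is the branch-matching step in the first paragraph: the principal-argument convention of (\ref{51.630}) produces a sign discrepancy between $\sqrt{\beta_n^-(z_n^*)}$ and $\overline{\sqrt{\beta_n^+(z_n^*)}}$ precisely when $\beta_n^+(z_n^*)$ is a negative real, i.e.\ when $\Re\beta_n^+(z_n^*)/|\beta_n^+(z_n^*)|\to-1$, which is the very configuration excluded in part (a); in each part, the excluded cluster value shields the relevant subsequence from this pathology, so the symmetric branch is legitimate where it is used.
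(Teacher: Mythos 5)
Your proposal is correct and follows essentially the same route as the paper: exploit the symmetry $\beta_n^-(z)=\overline{\beta_n^+(\overline z)}$, pick branches satisfying \eqref{14.0} that are compatible with the pairing \eqref{50.63}--\eqref{50.64} of Theorem~\ref{thm50}, translate the cluster-point hypotheses, and substitute into Theorem~\ref{thm51}. Your only real deviation is to \emph{define} $\sqrt{\beta_n^-(z)}:=\overline{\sqrt{\beta_n^+(\overline z)}}$ outright and verify the root pairing by monotonicity of $z-\alpha_n(z)$ on the real axis (the paper instead uses \eqref{51.630} for both branches and patches the case $\arg\beta_n^+(z_n^*)=\pi$ separately, checking consistency with $\gamma_n>0$); this makes the ``pathology'' you flag in your last paragraph moot, since with your definition \eqref{14.0} and the correct pairing hold for all $n$, with no need to invoke the excluded cluster values.
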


\begin{proof}
Recall that if the operator $L (v)$ is self-adjoint  (i.e.,  in the Hill case
if $v(x)$  is real valued, and in the Dirac case if
$v(x) = \begin{pmatrix}  0 & P(x) \\ Q(x) & 0 \end{pmatrix} $
with $Q(x) = \overline{P(x)}\;$), then
$$
\beta_n^- (z) = \overline{\beta_n^+ (\overline{z})}
$$
(for Hill and Dirac operators with $L^2$ potentials see, respectively:   Lemma~24,
formula (2.32) or  Lemma~30, formula (2.80)  in \cite{DM15});
for Hill operators with singular potentials
see formula (3.33) in \cite[Lemma 9]{DM21}).

Since the spectrum is real, the numbers $ z_n^*$ are also real, so
$$
\beta_n^- (z_n^*) = \overline{\beta_n^+ (z_n^*)}.
$$

If $\arg  \beta_n^+ (z_n^*) = \theta_n^+ \in (-\pi, \pi), $  then we have
$\arg  \beta_n^- (z_n^*) = \theta_n^-  = - \theta_n^+, $
so if we define the branches  $\sqrt{\beta_n^- (z)}$ and $\sqrt{\beta_n^+ (z)}$
by (\ref{51.630}) then (\ref{14.0}) holds.

However, in the case when $\theta_n^+ = \pi$  
(i.e., if $\beta_n^- (z_n^*)=\beta_n^+ (z_n^*) $ is a negative real number), 
(\ref{51.630}) does not imply (\ref{14.0}). Therefore, in this case 
we modify (\ref{51.630}) and set
$$
\sqrt{\beta_n^\pm (z)}   = \pm i \sqrt{|\beta_n^+ (z_n^*)|}   
\left (\frac{\beta_n^\pm
(z)}{\beta_n^\pm (z^*_n)}   \right )^{1/2};
$$
then (\ref{14.0}) holds as well.

In view of (\ref{14.0}), since $z_n^*$ is real we have
$$
\sqrt{\beta_n^- (z_n^*)} =\overline{\sqrt{\beta_n^+ (z_n^*)}}, \quad
\gamma_n \sim 2\sqrt{\beta_n^- (z_n^*)} \cdot \sqrt{\beta_n^+ (z_n^*)} = 2\left
|\beta_n^+ (z_n^*) \right |$$
which agree with $\gamma_n = \lambda_n^+ - \lambda_n^-  >0 $  and  shows that
this choice of branches is good for Theorem~\ref{thm50}.

It is easy to see  (by (\ref{14.0}), since   
$\arg \sqrt{\beta_n^+ (z_n^*)} $   approaches $ \pm \pi/2$
or $0$ if and only if  $ \arg  \beta_n^+ (z_n^*) $ approaches $\pm \pi $ or $0$)
that $-1 $  (or $1$)  is a cluster point of 
$\left (\sqrt{\beta^-_n (z^*_n)}/\sqrt{\beta^+_n (z^*_n)} \right)_{n\in
\Delta} $  if and only if
 $ -1$ (respectively $1$) is a cluster point of  
$\left (Re \, \beta^+_n (z^*_n)/ |\beta^+_n (z^*_n)| \right)_{n\in
\Delta}.$  
Therefore, by (\ref{14.0}) formulas (\ref{14.1})--(\ref{14.6}) follow immediately from
Theorem~\ref{thm51}.

\end{proof}

\begin{Corollary}
\label{cor15}
Under the assumptions of Theorem~\ref{thm51},  suppose that
$\Delta_1 \subset \Delta$  is an infinite set such that
$\lim_{n\in \Delta_1}  \left (\sqrt{\beta^-_n (z_n^*)}/\sqrt{\beta^+_n (z_n^*)} \right) =1.$
Then, for Hill operators,
\begin{equation}
\label{14.11}
\lambda_n^+ - \mu_n \sim \gamma_n \sim 2 \beta_n^+ (z_n^*), \quad
\mu_n - \lambda_n^- = o(\gamma_n), \quad
n \in \Delta_1,
\end{equation}
and for Dirac operators
\begin{equation}
\label{14.110}
\mu_n - \lambda_n^- \sim \gamma_n \sim 2 \beta_n^+ (z_n^*), \quad
\mu_n - \lambda_n^+ = o(\gamma_n), \quad
n \in \Delta_1.
\end{equation}

\end{Corollary}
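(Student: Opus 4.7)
The plan is to derive all four claims from Theorem~\ref{thm50} and part~(a) of Theorem~\ref{thm51}, exploiting the hypothesis $\sqrt{\beta_n^-(z_n^*)}/\sqrt{\beta_n^+(z_n^*)} \to 1$ (as $n\to\infty$ in $\Delta_1$) to perform explicit cancellations. First I note that since the limit equals $+1$, the value $-1$ is not a cluster point of the ratio, so Theorem~\ref{thm51}(a) does apply on $\Delta_1$. The hypothesis also yields the three algebraic consequences
\begin{equation*}
\sqrt{\beta_n^-(z_n^*)}\sqrt{\beta_n^+(z_n^*)} \sim \beta_n^+(z_n^*),
\qquad
\bigl(\sqrt{\beta_n^+(z_n^*)}+\sqrt{\beta_n^-(z_n^*)}\bigr)^{2} \sim 4\beta_n^+(z_n^*),
\end{equation*}
together with the crucial smaller-order bound $\bigl(\sqrt{\beta_n^+(z_n^*)}-\sqrt{\beta_n^-(z_n^*)}\bigr)^{2} = o(\beta_n^+(z_n^*))$.

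In the Hill case, Theorem~\ref{thm50} then gives $\gamma_n \sim 2\sqrt{\beta_n^-(z_n^*)}\sqrt{\beta_n^+(z_n^*)} \sim 2\beta_n^+(z_n^*)$, and Theorem~\ref{thm51}(a) gives $\mu_n - \lambda_n^+ \sim -2\beta_n^+(z_n^*)$; together these prove the first assertion in (\ref{14.11}). The Dirac case is parallel: Theorem~\ref{thm51}(a) gives $\mu_n - \lambda_n^- \sim 2\beta_n^+(z_n^*) \sim \gamma_n$, the first claim of (\ref{14.110}). For the remaining $o(\gamma_n)$ statements one cannot invoke Theorem~\ref{thm51}(b), since in our setting $1$ is precisely the cluster point of the ratio. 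Instead, in the Hill case I would write $\mu_n - \lambda_n^- = (\mu_n - \lambda_n^+) + \gamma_n$ and expand each summand with its explicit vanishing relative error $\epsilon_n,\delta_n \to 0$, so that
\begin{equation*}
\mu_n - \lambda_n^- = -\tfrac{1}{2}\bigl(\sqrt{\beta_n^+(z_n^*)}+\sqrt{\beta_n^-(z_n^*)}\bigr)^{2}(1+\epsilon_n) + 2\sqrt{\beta_n^-(z_n^*)}\sqrt{\beta_n^+(z_n^*)}(1+\delta_n).
\end{equation*}
The leading parts combine algebraically to $-\tfrac{1}{2}\bigl(\sqrt{\beta_n^+(z_n^*)}-\sqrt{\beta_n^-(z_n^*)}\bigr)^{2}$, which is $o(\beta_n^+(z_n^*))$ by the third identity above; the remainder is bounded in modulus by $C\,|\beta_n^+(z_n^*)|(|\epsilon_n|+|\delta_n|)$, so in total $\mu_n - \lambda_n^- = o(\beta_n^+(z_n^*)) = o(\gamma_n)$. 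The Dirac case is dispatched identically via $\mu_n - \lambda_n^+ = (\mu_n - \lambda_n^-) - \gamma_n$.

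The main obstacle is precisely this last subtraction step: since $\mu_n - \lambda_n^+$ and $\gamma_n$ are both known only up to multiplicative asymptotic equivalence, forming their difference naively could destroy all information. What rescues the argument is that the hypothesis forces the two principal terms to cancel into $\bigl(\sqrt{\beta_n^+(z_n^*)}-\sqrt{\beta_n^-(z_n^*)}\bigr)^{2}$, a quantity of strictly smaller order than $\beta_n^+(z_n^*)$, while the leftover errors are bounded by a vanishing fraction of $|\beta_n^+(z_n^*)| \asymp |\gamma_n|$. The work therefore lies in careful bookkeeping of the relative error terms in the two invoked asymptotic equivalences, not in any new analytic input.
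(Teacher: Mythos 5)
Your proposal is correct and follows essentially the same route as the paper: both derive $\gamma_n\sim 2\beta_n^+(z_n^*)$ from Theorem~\ref{thm50}, get $\lambda_n^+-\mu_n\sim 2\beta_n^+(z_n^*)$ (resp.\ $\mu_n-\lambda_n^-$ in the Dirac case) from part (a) of Theorem~\ref{thm51}, and then use the decomposition $\mu_n-\lambda_n^-=(\mu_n-\lambda_n^+)+\gamma_n$. The only difference is that the paper dispatches the last step in one line, observing that $(\mu_n-\lambda_n^-)/\gamma_n=(\mu_n-\lambda_n^+)/\gamma_n+1\to -1+1=0$ directly from the two equivalences already proved, so your explicit cancellation into $\bigl(\sqrt{\beta_n^+(z_n^*)}-\sqrt{\beta_n^-(z_n^*)}\bigr)^2$ with error bookkeeping, while valid, is more work than is needed.
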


\begin{proof}
Indeed, by Theorem \ref{thm50} we have
$$
\gamma_n \sim  2\sqrt{\beta_n^-(z_n^*)}\sqrt{\beta_n^+(z_n^*)}
= 2 \beta_n^+ (z_n^*) \cdot
\frac{\sqrt{\beta_n^-(z_n^*)}}{\sqrt{\beta_n^+(z_n^*)}}
\sim  2 \beta_n^+ (z_n^*)  \quad \text{for} \;\; n \in \Delta_1.
$$
In the Hill case, from (\ref{50.4}) in Theorem \ref{thm51} it follows
$$
\lambda_n^+ - \mu_n  \sim \frac{1}{2} \beta_n^+ (z_n^*)
\left (\sqrt{\beta^-_n (z_n^*)}/\sqrt{\beta^+_n (z_n^*)} +1 \right )^2 \sim
2\beta_n^+ (z_n^*) \quad \text{for} \;\; n \in \Delta_1.
$$
Moreover
$$\frac{\mu_n - \lambda_n^-}{\gamma_n}  = \frac{\mu_n - \lambda_n^+}{\gamma_n} +1
\to 0  \quad  \text{as} \;\; n \in \Delta_1, \; n \to \infty,
$$
which completes the proof of (\ref{14.11}).

By the same argument, (\ref{50.5}) implies   (\ref{14.110}).

\end{proof}

The next remark shows that there are wide classes of potentials
for which Corollary~\ref{cor15} is valid.
\begin{Remark}
Let  $v$  be either Hill potential with real nonnegative Fourier coefficients or
Dirac potential of the form $v (x) = \begin{pmatrix}
0 & P(x)  \\P(x)  & 0,
\end{pmatrix}$
where $P$ has real nonnegative Fourier coefficients.
Suppose the assumptions of Theorem~\ref{thm51} hold.
Then  for large enough $n \in \Delta $ we have
\begin{equation}
\label{14.21}
\beta_n^- (z_n^*)= \beta_n^+ (z_n^*) >0,
\end{equation}
and the choice
\begin{equation}
\label{14.22}
\sqrt{\beta_n^- (z_n^*)}= \sqrt{\beta_n^+ (z_n^*)} > 0
\end{equation}
is good for Theorems \ref{thm50} and \ref{thm51}, so (\ref{14.11})
or (\ref{14.110}) hold.  Moreover,  (\ref{14.11}) and  (\ref{14.110})
show that in the Hill case
$\mu_n$ is ``close" to $\lambda_n^-,$
while in Dirac case  $\mu_n$ is ``close" to $\lambda_n^+.$

\end{Remark}

Indeed,  (\ref{14.21}) follows from the
explicit expressions for $\beta^\pm_n (v;z)$  in terms of the Fourier coefficients
of the potential (see \cite{DM15}, formulas (2.16)--(2.33) for Hill
operators, and formulas (2.71)--(2.80) for Dirac operators).

\section{Asymptotics of the spectral gaps
and deviations of Hill  operators with potentials
$v(x) = a e^{-2ix}+ b e^{2ix}$}

1. Consider the Hill potentials of the form
\begin{equation}
\label{i00} v(x) = a e^{-2ix}+ b e^{2ix}, \quad a,b \in
\mathbb{C}\setminus \{0\}.
\end{equation}
This is the most simple two-parameter family of
trigonometric polynomial potentials.

For $b=a \in \mathbb{R}\setminus \{0\}$ we obtain the Mathieu potentials
\begin{equation}
\label{i05} v(x) = 2a \cos 2x, \quad a\neq 0, \;\text{real}.
\end{equation}
For fixed $n$ and $a \to 0, $  Levy and Keller \cite{LK} gave the
following asymptotics for the spectral gaps of the Mathieu operator:
\begin{equation}
\label{i06} \gamma_n = \lambda_n^+ - \lambda_n^- =
\frac{8(|a|/4)^n}{[(n-1)!]^2} \left ( 1+ O(a) \right ), \quad a \to 0.
\end{equation}
Harrell \cite{Har} found, up to a constant factor, the asymptotics
of the spectral gaps of the Mathieu operator for fixed $a$ as  $
n\to \infty. $   Avron and Simon \cite{AS} gave an alternative proof
of  Harrell's asymptotics and found the exact value of the constant
factor, which led to the formula
\begin{equation}
\label{i07} \gamma_n = \lambda_n^+ - \lambda_n^- =
\frac{8(|a|/4)^n}{[(n-1)!]^2} \left ( 1+ o\left (\frac{1}{n^2}\right
) \right ) \quad \text{as} \;\; n \to \infty.
\end{equation}

In \cite[Section 3]{DM25} we studied the
existence of (Riesz) bases consisting of
root functions of the Hill operators with potentials of the
form (\ref{i00}), subject to periodic or antiperiodic boundary
conditions. In this connection,  we found the following asymptotics of
$\beta_n^+ $ and $\beta_n^- $ (see Propositions 5 and 6 in \cite{DM25}):
\begin{equation}
\label{3.22} \beta_n^+ (z) = \frac{4(b/4)^n}{ [(n-1)!]^2} (1+
\eta_n), \quad \eta_n = O(\log n/n)), \;\; |z| \leq 1.
\end{equation}
\begin{equation}
\label{3.24} \beta_n^- (z) = \frac{4(a/4)^n}{ [(n-1)!]^2} (1+
\eta_n), \quad \eta_n = O(\log n/n)), \;\; |z| \leq 1.
\end{equation}
In view of Theorems \ref{thm50} and \ref{thm51},
from these asymptotics we obtain the following.
\begin{Theorem}
\label{thm17}
Consider the Hill operator with a potential of the form (\ref{i00}).
If $|a| \neq |b|,$ then
\begin{equation}
\label{3.31} \gamma_n  \sim   \frac{8\sqrt{a^n}\sqrt{b^n}}{
4^n[(n-1)!]^2} \quad \text{for} \;\; n \in \mathbb{N},
\end{equation}
\begin{equation}
\label{i11} \mu_n - \lambda_n^+  \sim  - \frac{2}{4^n [(n-1)!]^2}
\left (a^n + b^n + 2\sqrt{a^n} \sqrt{b^n}  \right )  \quad
\text{for} \;\; n \in \mathbb{N},
\end{equation}
\begin{equation}
\label{i12} \mu_n - \lambda_n^- \sim  - \frac{2}{4^n [(n-1)!]^2}
\left (a^n + b^n - 2\sqrt{a^n} \sqrt{b^n} \right )  \quad \text{for}
\;\; n \in \mathbb{N},
\end{equation}
where  $\sqrt{a^n} $ and $ \sqrt{b^n}$ are chosen appropriately,
and
\begin{equation}
\label{i13} \mu_n - \frac{1}{2}(\lambda_n^- + \lambda_n^+)
 \sim  - \frac{2}{4^n [(n-1)!]^2} \left (a^n + b^n \right )  \quad \text{for}
\;\; n \in \mathbb{N}.
\end{equation}
\end{Theorem}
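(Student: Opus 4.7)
The plan is to deduce Theorem~\ref{thm17} by applying Theorems~\ref{thm50} and~\ref{thm51} to the potential (\ref{i00}), feeding in the explicit asymptotics (\ref{3.22}) and (\ref{3.24}) for $\beta_n^\pm$. There is no new spectral analysis to do; the work is almost entirely bookkeeping and verification of the hypotheses.

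First I would check that the hypotheses (\ref{50.1}) and (\ref{50.2}) hold on $\Delta = \mathbb{N}$ (up to finitely many exceptions). Since $a,b \neq 0$, the formulas (\ref{3.22})--(\ref{3.24}) make $\beta_n^\pm(z_n^*)$ nonzero for all large $n$, yielding (\ref{50.1}). For (\ref{50.2}), note that the disc $|z - z_n^*| \leq 2(|\beta_n^-(z_n^*)| + |\beta_n^+(z_n^*)|)$ sits well inside $|z| \leq 1$ for large $n$ because $|\beta_n^\pm(z_n^*)|$ decays like $|ab|^{n/2}/(4^n[(n-1)!]^2)$. Dividing (\ref{3.22}) by its value at $z_n^*$ and using that both numerator and denominator satisfy $(1 + O(\log n/n))$ uniformly on $|z|\leq 1$, one gets $\beta_n^+(z)/\beta_n^+(z_n^*) = 1 + O(\log n/n)$, and likewise for $\beta_n^-$. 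Thus (\ref{50.2}) holds with $\eta_n = O(\log n/n) \to 0$.

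Next I would apply Theorem~\ref{thm50}: with suitable branches of $\sqrt{\beta_n^\pm(z)}$,
\[
\gamma_n \sim 2\sqrt{\beta_n^-(z_n^*)}\sqrt{\beta_n^+(z_n^*)} \sim \frac{8\sqrt{a^n}\sqrt{b^n}}{4^n\,[(n-1)!]^2},
\]
proving (\ref{3.31}). For Theorem~\ref{thm51} I must verify that $\pm 1$ are not cluster points of the sequences of ratios. Using (\ref{3.22})--(\ref{3.24}),
\[
\left|\frac{\sqrt{\beta_n^-(z_n^*)}}{\sqrt{\beta_n^+(z_n^*)}}\right| \sim \left|\frac{a}{b}\right|^{n/2}, \qquad
\left|\frac{\beta_n^-(z_n^*)}{\beta_n^+(z_n^*)}\right| \sim \left|\frac{a}{b}\right|^n,
\]
and since $|a|\neq |b|$ these tend to either $0$ or $\infty$; in particular neither $+1$ nor $-1$ is a cluster point of either sequence. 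Hence parts (a), (b), (c) of Theorem~\ref{thm51} all apply, and plugging in (\ref{3.22})--(\ref{3.24}) produces (\ref{i11}), (\ref{i12}) and (\ref{i13}) respectively, by an identity such as
\[
-\tfrac{1}{2}\bigl(\sqrt{\beta_n^+(z_n^*)}\pm\sqrt{\beta_n^-(z_n^*)}\bigr)^2 \sim -\frac{2}{4^n[(n-1)!]^2}\bigl(a^n + b^n \pm 2\sqrt{a^n}\sqrt{b^n}\bigr).
\]

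The only subtle point is consistency of the square-root branches across the four formulas. Theorem~\ref{thm50} produces branches $\sqrt{\beta_n^\pm(z)}$ adapted to the equations (\ref{51.63})--(\ref{51.64}); once these are fixed, the symbols $\sqrt{a^n}$, $\sqrt{b^n}$ in (\ref{3.31}) and (\ref{i11})--(\ref{i12}) are determined (up to the freedom of flipping one of them, which is already absorbed in Theorem~\ref{thm50}), and the same choice must be used throughout. This is not a real obstacle, just a notational matter. Since (\ref{i13}) involves only the product-free combination $\beta_n^+(z_n^*)+\beta_n^-(z_n^*)$, it is independent of any branch choice, which is consistent with the formulation. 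Thus the main and only content of the proof is assembling these pieces.
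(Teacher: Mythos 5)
Your proposal is correct and follows essentially the same route as the paper: the paper's own proof simply observes that $|a|\neq|b|$ forces $|a/b|^{n/2}\to 0$ or $\infty$, so $\pm1$ are not cluster points of the relevant ratio sequences, and then cites Theorems~\ref{thm50} and~\ref{thm51} together with (\ref{3.22})--(\ref{3.24}). Your additional verification of hypotheses (\ref{50.1})--(\ref{50.2}) and your remark on branch consistency are sound and merely make explicit what the paper leaves implicit.
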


\begin{proof}
Since $|a|\neq |b|,$  $\; -1 $ and  $1$ are not cluster points of
the sequence $ \left (\sqrt{a^n}/\sqrt{b^n} \right )_{n\in
\mathbb{N}}$ and $1$ is not a cluster point of $ \left ({a^n}/{b^n}
\right )_{n\in \mathbb{N}}.$  Hence, (\ref{i11}), (\ref{i12}) and
(\ref{i13}) follow from (\ref{50.4}), (\ref{50.5}) and (\ref{50.8})
respectively. 
\end{proof}

The situation is more complicated if $|a|=|b|; $ in general, then we
can claim the validity of (\ref{i11})--(\ref{i13}) only on suitable
subsequences of indices $n.$ The following simple example
illustrates the choice of suitable subsequences, and also the choice
of branches $\sqrt{\beta_n^\pm (z)}.$

\begin{Example}
(i)  Let $b>0 $ and $ a= -b.$ Then $a^n = (-1)^n b^n,$ and we fix
$$
\sqrt{b^n} = b^{n/2}, \quad  \sqrt{a^n}= \begin{cases} b^{n/2}&
\text{for even} \;n, \\  \pm i\, b^{n/2} & \text{for odd}\;
\end{cases}
$$
Then, with a suitable choice of  signs $\pm $ for odd $n,$
we have that
(\ref{i11})
holds for $n \in \mathbb{N},$  (\ref{i12}) holds for $n \in
2\mathbb{N}-1,$ and (\ref{i13}) holds $n \in 2\mathbb{N}.$

(ii) Let $b>0$   and $a=b i.$  Then $a^n = i^n b^n, $ and we fix
$$
\sqrt{b^n} = b^{n/2}, \quad  \sqrt{a^n}= \begin{cases} b^{n/2}&
\text{for}\; \;n\in 4\mathbb{N}, \\  b^{n/2} e^{-i\pi/4} &
\text{for} \;
\; n\in 4\mathbb{N}-1, \\ \pm i b^{n/2} & \text{for} \; \; n\in 4\mathbb{N}-2, \\
b^{n/2} e^{i\pi/4} & \text{for} \; \; n\in 4\mathbb{N}-3.
\end{cases}
$$

Then, with a suitable choice of  signs $\pm $ for  $n\in 4\mathbb{N}-2,$
we have that  (\ref{i11})
hold for $n \in \mathbb{N},$ (\ref{i12}) hold for $n \in \mathbb{N}
\setminus 4\mathbb{N},$ and (\ref{i13}) hold for $n \in \mathbb{N}
\setminus (4\mathbb{N}-2).$
\end{Example}

2. The formulas (\ref{3.22}) and (\ref{3.24}) were good enough for
our goals in \cite[Theorem 7]{DM25}; they allowed us (using
Criterion~\ref{crit2}) to conclude that the periodic (or
antiperiodic) root function system contains a Riesz basis if and
only if $|a|=|b|.$ But a more careful asymptotic analysis could give
more precise estimates of the remainder $\eta_n$ in (\ref{3.22}) and
(\ref{3.24}).

Anahtarci and Djakov \cite{AD1} refined  the Harrell-Avron-Simon
asymptotics (\ref{i07}) as follows: for fixed $a\in \mathbb{C}, \,
a\neq 0,$
\begin{equation}
\label{i90} \lambda_n^+ - \lambda_n^-= \pm
\frac{8(a/4)^n}{[(n-1)!]^2} \left[1 - \frac{a^2}{4n^3}+ O \left(
\frac{1}{n^4}\right)\right], \quad n\rightarrow\infty.
\end{equation}
The same argument could be used (with slight modifications) in
order to obtain
\begin{equation}
\label{i100} \lambda_n^+ - \lambda_n^-= \pm
\frac{8(\sqrt{ab}/4)^n}{[(n-1)!]^2} \left[1 - \frac{ab}{4n^3}+ O
\left( \frac{1}{n^4}\right)\right], \quad n\rightarrow\infty
\end{equation}
in the case of potentials (\ref{i00}).
\bigskip

Let $H_t (a,b)$ denotes the Hill operator
with a potential (\ref{i00}), subject to the
boundary conditions
\begin{equation}
\label{i20} y(\pi) = e^{it} y(0), \quad  y^\prime (\pi) = e^{it}
y^\prime (0), \quad -\pi < t \leq \pi.
\end{equation}
Veliev \cite[Theorem 1]{V}  showed that the operators $H_t (a,b)$
have the following isospectral property:
\begin{equation}
\label{i21} Sp \,(H_t (a,b))= Sp \,(H_t (c,d)) \quad \text{if} \quad
ab = c d,
\end{equation}
where $Sp \,(H_t (a,b))$ denotes the spectrum of the operator $H_t
(a,b).$ Therefore, (\ref{i90}) with $\sqrt{ab} $ instead of $a$
implies directly (\ref{i100}).

Theorem~1 in \cite{V} is a partial case of the following assertion.
\begin{Proposition}
\label{Iso} Let $L_t (v) $ be the Hill operator generated by the
boundary conditions (\ref{i20}), and let $v(z) $ be a $\pi$-periodic
entire function. Set $ v_\zeta (z) = v(z+ \zeta) $ for $\zeta \in
\mathbb{C}. $ Then the spectra of the operators $L_t (v) $ and $L_t
(v_\zeta) $ coincide for every $\zeta \in \mathbb{C}. $
\end{Proposition}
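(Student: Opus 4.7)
The plan is to exhibit, for any eigenvalue $\lambda$, a linear bijection between the corresponding root subspaces of $L_t(v)$ and $L_t(v_\zeta)$ via the translation $y(x)\mapsto y(x+\zeta)$. The critical input is that $v$ is entire and $\pi$-periodic, so every solution of the eigenvalue equation extends to an entire function on $\mathbb{C}$, making translation by complex $\zeta$ legitimate.

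Step 1 (Floquet extension to all of $\mathbb{C}$). Suppose $y\in L^2([0,\pi])$ is an eigenfunction of $L_t(v)$ with eigenvalue $\lambda$, i.e. $-y''+v(x)y=\lambda y$ together with $y(\pi)=e^{it}y(0)$ and $y'(\pi)=e^{it}y'(0)$. Since $v$ is entire, $y$ extends uniquely to an entire function satisfying the ODE on all of $\mathbb{C}$. The function $z(x):=y(x+\pi)-e^{it}y(x)$ is entire, solves the same linear ODE (because $v(x+\pi)=v(x)$), and has $z(0)=z'(0)=0$ by the boundary conditions. By uniqueness for linear ODEs, $z\equiv 0$, hence
\begin{equation*}
 y(x+\pi)=e^{it}y(x),\qquad y'(x+\pi)=e^{it}y'(x),\qquad \text{for all }x\in\mathbb{C}.
\end{equation*}

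Step 2 (Translation is an eigenfunction map). Set $\widetilde{y}(x):=y(x+\zeta)$. Then $\widetilde{y}$ is entire, not identically zero, and $-\widetilde{y}''+v_\zeta(x)\widetilde{y}=\lambda\widetilde{y}$. Using Step~1 at $x=\zeta$,
\begin{equation*}
 \widetilde{y}(\pi)=y(\pi+\zeta)=e^{it}y(\zeta)=e^{it}\widetilde{y}(0),\qquad \widetilde{y}'(\pi)=e^{it}\widetilde{y}'(0),
\end{equation*}
so $\widetilde{y}$ is an eigenfunction of $L_t(v_\zeta)$ with the same eigenvalue $\lambda$. Thus $\operatorname{Sp}(L_t(v))\subset\operatorname{Sp}(L_t(v_\zeta))$ as sets, and the reverse inclusion follows by the same argument applied with $-\zeta$ in place of $\zeta$ and $v_\zeta$ in place of $v$.

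Step 3 (Algebraic multiplicities, and the inverse). To cover the non-self-adjoint case, one argues by induction on the Jordan chain. If $y_0,y_1,\dots,y_m$ is a root-function chain at $\lambda$, i.e. $(L_t(v)-\lambda)y_0=0$ and $(L_t(v)-\lambda)y_k=y_{k-1}$, then the auxiliary function $z_k(x):=y_k(x+\pi)-e^{it}y_k(x)$ satisfies $(-d^2/dx^2+v-\lambda)z_k=z_{k-1}$ with $z_k(0)=z_k'(0)=0$; since $z_{k-1}\equiv 0$ by induction, uniqueness forces $z_k\equiv 0$. Consequently each translated chain $\widetilde{y}_k(x)=y_k(x+\zeta)$ is a root chain of $L_t(v_\zeta)$ of the same length. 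The translation $T_\zeta y=y(\,\cdot+\zeta)$ is invertible with inverse $T_{-\zeta}$ and takes the algebraic root subspace of $L_t(v)$ at $\lambda$ isomorphically onto that of $L_t(v_\zeta)$, so the spectra coincide with algebraic multiplicities.

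The only delicate point is the justification of Step~1: one must invoke the entirety of $v$ (not merely real-analyticity on $[0,\pi]$) to extend eigenfunctions to $\mathbb{C}$ and propagate the quasi-periodicity relation off the real axis for complex $\zeta$. Everything else is a straightforward verification once that global Floquet identity is in hand.
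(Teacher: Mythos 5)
Your proof is correct and follows essentially the same route as the paper: extend the eigenfunction to an entire solution, propagate the quasi-periodicity relation $y(z+\pi)=e^{it}y(z)$ to all of $\mathbb{C}$, observe that $y(\cdot+\zeta)$ is then an eigenfunction of $L_t(v_\zeta)$ for the same $\lambda$, and get the reverse inclusion by translating back. Your Step 3 on Jordan chains is a correct bonus beyond what the proposition asserts (the paper only claims equality of spectra as sets), and your explicit uniqueness argument for the global Floquet identity is a slightly more careful version of what the paper states without proof.
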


\begin{proof} Fix $\lambda \in Sp\, (L_t (v)) $ and let $y(x)$
be a corresponding eigenfunction. In view of
(\ref{i20}) and the existence-uniqueness theorem for linear ordinary
differential equations  we have
\begin{equation}
\label{i23} y(\pi+x) = e^{it} y(x), \quad x \in \mathbb{R}.
\end{equation}

Since the potential $v(z) $ is an entire function, it is well-known
(e.g., see \cite[Section 3.7]{CL}) that $y(x) $ has an analytic
extension $y(z)$ to an entire function that satisfies the equation
$$y^{\prime \prime}(z) + v(z) y (z) = \lambda y(z), \quad z\in \mathbb{C}. $$
Of course, (\ref{i23}) extends on $\mathbb{C}$ as well, that is
\begin{equation}
\label{i230} y(\pi+z) = e^{it} y(z), \quad z \in \mathbb{C}.
\end{equation}

Fix $\zeta \in \mathbb{C}.$  Then the function $y_\zeta (z) =
y(z+\zeta) $ is a solution of the equation
$$y_\zeta^{\prime \prime}(z) + v_\zeta (z) y_\zeta (z) =
\lambda \, y_\zeta (z), \quad z\in \mathbb{C}. $$ Moreover, by
(\ref{i230}) with $z=\zeta $  we have that
$$    y_\zeta (\pi) = y(\pi+\zeta) = e^{it} y(\zeta) = e^{it} y_\zeta
(0),  \quad  y^\prime_\zeta (\pi) = e^{it} y^\prime (\zeta) = e^{it}
y^\prime_\zeta (0),$$ so $\lambda $ is an eigenvalue of $L_t
(v_\zeta)$ and $y_\zeta $ is a corresponding eigenfunction.
Therefore, we conclude that
$$
Sp \, (L_t (v) ) \subset Sp \, (L_t (v_\zeta) ).
$$
Since the same argument proves the opposite inclusion, this
completes the proof of Proposition~\ref{Iso}.

\end{proof}

 \begin{Corollary}
In the above notations, if $ab=cd,$   then (\ref{i21}) holds.
\end{Corollary}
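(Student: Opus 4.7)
The plan is to deduce the corollary from Proposition~\ref{Iso} by exhibiting an explicit complex shift $\zeta$ that maps the potential $v(x)=ae^{-2ix}+be^{2ix}$ to $\tilde v(x)=ce^{-2ix}+de^{2ix}$ whenever $ab=cd$.

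First I would compute how a shift acts on potentials of the given form. For $\zeta\in\mathbb{C}$,
\begin{equation*}
v_\zeta(x) = v(x+\zeta) = a e^{-2i\zeta}\, e^{-2ix} + b e^{2i\zeta}\, e^{2ix},
\end{equation*}
so the translation by $\zeta$ sends the pair of Fourier coefficients $(a,b)$ to $(a e^{-2i\zeta},\, b e^{2i\zeta})$. Note that this map preserves the product of the two coefficients, which is exactly the invariant appearing in the hypothesis $ab=cd$. This is the key algebraic observation behind Veliev's result.

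Next I would choose $\zeta$ so that $v_\zeta=\tilde v$. Since $a,c\in\mathbb{C}\setminus\{0\}$, the equation $ae^{-2i\zeta}=c$ admits the solution $\zeta=\tfrac{i}{2}\log(c/a)$ for any branch of the logarithm. With this $\zeta$ one has $e^{2i\zeta}=a/c$, hence
\begin{equation*}
b e^{2i\zeta} = \frac{ab}{c} = \frac{cd}{c} = d,
\end{equation*}
where the second equality uses the hypothesis $ab=cd$. Therefore $v_\zeta(x)=ce^{-2ix}+de^{2ix}$, i.e.\ $v_\zeta$ is exactly the potential defining $H_t(c,d)$.

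Finally, since $v(z)=ae^{-2iz}+be^{2iz}$ is a $\pi$-periodic entire function of $z$, Proposition~\ref{Iso} applies and gives $Sp(L_t(v))=Sp(L_t(v_\zeta))$. Combined with the identification $L_t(v)=H_t(a,b)$ and $L_t(v_\zeta)=H_t(c,d)$, this yields (\ref{i21}). There is no real obstacle here: the whole content is the one-line observation that translation acts on $(a,b)$ by $(ae^{-2i\zeta},be^{2i\zeta})$, and the only mild point to check is that the branch of $\log(c/a)$ is irrelevant because changing branches shifts $\zeta$ by a real multiple of $\pi$, which corresponds to an ordinary real translation of the potential and trivially preserves the spectrum of $L_t$.
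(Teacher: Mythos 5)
Your proof is correct and follows essentially the same route as the paper: both arguments pick the shift $\zeta$ with $e^{2i\zeta}=a/c$ (equivalently $e^{-2i\zeta}=c/a$), use $ab=cd$ to see that the second coefficient becomes $d$, and then invoke Proposition~\ref{Iso}. The only addition is your (correct) remark that the choice of branch of $\log(c/a)$ is immaterial, which the paper leaves implicit.
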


\begin{proof}
Indeed, let  $v(z) = a e^{-2iz} + b e^{2iz}$
and $ab= cd.$ Choose
$\zeta \in \mathbb{C} $ so that
$$
e^{2i\zeta} = \frac{a}{c} = \frac{d}{b}.
$$
Then it follows that
$$
v(x+\zeta) = a e^{-2i\zeta}e^{-2ix} + b e^{2i\zeta}e^{2ix}=
ce^{-2ix} + d e^{2ix}. $$
\end{proof}

\bigskip

Let us mention, that one can prove Proposition~\ref{Iso} using
the corresponding Lyapunov characteristic function. Recall that
if $\varphi (x, \lambda, v), \, \psi (x, \lambda, v)  $ is a pair of
solutions of the equation
$$
-y^{\prime \prime} + v(x) y = \lambda y
$$
such that
$
\varphi (0, \lambda, v) = 1, \; \varphi^\prime (0, \lambda, v) = 1,
\; \psi (0, \lambda, v) = 0, \; \psi^\prime (0, \lambda, v) = 1,
$
then
$$
D(\lambda, v) = \varphi (\pi, \lambda, v) + \psi^\prime (\pi, \lambda, v)
$$
is the Lyapunov characteristic function, and
the spectrum of the operator $L_t (v)$ is given by the
formula
$$
Sp \, (L_t (v)) = \{\lambda: \; D(\lambda, v) = 2 \cos t\}.
$$

Since $\varphi (\pi, \lambda, v) $ and $ \psi^\prime (\pi, \lambda,
v)$ depend analytically on $\lambda \in \mathbb{C} $ and $v \in L^2
([0,\pi]),$ it follows that $D(\lambda, v) $ is an entire function of
$\lambda $ and depends analytically on $v \in L^2 ([0,\pi]).$
\begin{Proposition}
In the above notations, if  $v(z) $ is a $\pi$-periodic entire
function, then
\begin{equation}
\label{i25} D (\lambda, v_\zeta) = D (\lambda, v), \quad \zeta \in
\mathbb{C}.
\end{equation}
\end{Proposition}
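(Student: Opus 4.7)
The plan is to work with the fundamental matrix
$M(x,\lambda,v)=\left(\begin{smallmatrix}\varphi & \psi\\ \varphi' & \psi'\end{smallmatrix}\right)$
built from the normalized solutions at $x=0$, and to show that the translation $v\mapsto v_\zeta$ corresponds to a similarity transformation of the monodromy matrix $M(\pi,\lambda,v)$. First I would establish a shift identity. Since $v$ is a $\pi$-periodic entire function, the fundamental solutions $\varphi(x,\lambda,v)$ and $\psi(x,\lambda,v)$ extend to entire functions of $x\in\mathbb{C}$ (as already used in the proof of Proposition~\ref{Iso}), so $v_\zeta$ and $M(x,\lambda,v_\zeta)$ are defined for every $\zeta\in\mathbb{C}$. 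The direct observation that $-\tilde y''+v_\zeta\tilde y=\lambda\tilde y$ whenever $\tilde y(x):=y(x+\zeta)$ and $-y''+vy=\lambda y$ translates, at the level of the fundamental matrix, into
$$
M(x+\zeta,\lambda,v)=M(x,\lambda,v_\zeta)\,M(\zeta,\lambda,v),\qquad x,\zeta\in\mathbb{C}.
$$

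Next I would combine this shift identity with the $\pi$-periodicity of $v$. The identity $v_\pi=v$ yields in particular the familiar group relation $M(x+\pi,\lambda,v)=M(x,\lambda,v)\,M(\pi,\lambda,v)$. Setting $x=\pi$ in the shift identity and $x=\zeta$ in the group relation, and equating the two resulting expressions for $M(\pi+\zeta,\lambda,v)$, produces
$$
M(\pi,\lambda,v_\zeta)\,M(\zeta,\lambda,v)=M(\zeta,\lambda,v)\,M(\pi,\lambda,v).
$$
Since $\det M(\zeta,\lambda,v)\equiv 1$ (the standard Wronskian computation for the Hill equation), $M(\zeta,\lambda,v)$ is invertible, hence $M(\pi,\lambda,v_\zeta)$ and $M(\pi,\lambda,v)$ are conjugate. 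Taking traces yields $D(\lambda,v_\zeta)=D(\lambda,v)$, which is precisely (\ref{i25}).

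The only mildly technical point is legitimizing the complex translation $y(x+\zeta)$ for $\zeta\in\mathbb{C}$, which requires the analytic extension of the fundamental solutions in the spatial variable; this was already carried out in the proof of Proposition~\ref{Iso}. Everything else is algebraic manipulation with $2\times 2$ matrices, so I do not anticipate any serious obstacle. As a by-product, the argument supplies an alternative, more direct proof of Proposition~\ref{Iso}, since $Sp\,(L_t(v))=\{\lambda:D(\lambda,v)=2\cos t\}$ depends only on the Lyapunov function $D(\lambda,\cdot)$.
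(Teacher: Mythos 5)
Your proof is correct, but it follows a genuinely different route from the paper. The paper's argument is a two-step analytic continuation: first, for \emph{real} shifts $s$ the identity $D(\lambda,v_s)=D(\lambda,v)$ is the standard Floquet-theory fact; then, fixing $\lambda$, the function $g(\zeta)=D(\lambda,v_\zeta)$ is entire in $\zeta$ because $D(\lambda,\cdot)$ depends analytically on the potential, and since $g$ is constant on $\mathbb{R}$ it is constant on $\mathbb{C}$ by the identity theorem. You instead prove the conjugacy $M(\pi,\lambda,v_\zeta)=M(\zeta,\lambda,v)\,M(\pi,\lambda,v)\,M(\zeta,\lambda,v)^{-1}$ directly for every complex $\zeta$, using the shift identity, the $\pi$-periodicity of $v$, and $\det M\equiv 1$, and then take traces. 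Your version needs only the entire extension of the fundamental solutions in the spatial variable (already established in the proof of Proposition~\ref{Iso}) and avoids both the appeal to analytic dependence of $D$ on $v\in L^2$ and the identity theorem; it also yields the stronger conclusion that the whole monodromy matrix is preserved up to similarity, hence so are the individual Floquet multipliers, not just their sum. The paper's route is shorter given its machinery and has the advantage of applying verbatim to any functional of the potential that is analytic in $v$ and invariant under real translations. In substance your argument is the matrix-level version of the eigenfunction argument used for Proposition~\ref{Iso}, packaged so that the conclusion about $D$ falls out by taking a trace.
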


{\em Sketch of a proof.} If $s\in \mathbb{R} $ then a standard
argument from the Floquet theory shows that $D (\lambda, v_s) = D
(\lambda, v).$

Fix $\lambda \in \mathbb{C} $ and consider the function
$$ g(\zeta) = D(\lambda, v_{\zeta}), \quad \zeta \in \mathbb{C}. $$
Since $ D (\lambda, v)$ depends analytically on $v,$ we infer that
$g(\zeta) $ is an entire function. But $g(s) = g(0) $ for $s\in
\mathbb{R},$ thus $g(\zeta) = g(0) $ for $\zeta \in \mathbb{C}, $
that is (\ref{i25}) holds.

Of course, the above remarks have their analogs for Dirac operators as well.
Let $L(v)$ be the Dirac operator (\ref{i2}),  and let $D(v, \lambda )$
be the corresponding Hill-Lyapunov characteristic function.
We denote by $L_t (v) $  the same Dirac operator (\ref{i2})
if  considered with the boundary conditions
 $$
 y(\pi) = e^{it} y(0), \quad - \pi < t \leq \pi.
 $$
\begin{Proposition}
If $P(z) $ and $Q(z)$ are $\pi$-periodic entire functions
and
$v(x) =\begin{pmatrix}
 0  &  P(x)  \\ Q(x) & 0 \end{pmatrix}, $
then
\begin{equation}
\label{ii25} D (\lambda, v_\zeta) = D (\lambda, v), \quad   \forall  \zeta \in
\mathbb{C};
\end{equation}
\begin{equation}
\label{ii26}
Sp \, (L_t (v_\zeta)) = Sp \, (L_t(v)), \quad \zeta \in
\mathbb{C}, \;\; - \pi < t \leq \pi.
\end{equation}
\end{Proposition}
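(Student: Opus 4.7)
The strategy mirrors the sketch given for the Hill case. The plan is to establish (\ref{ii25}) by analytic continuation from the real axis, and then deduce (\ref{ii26}) from the standard Floquet characterization
\[
Sp\,(L_t(v))=\{\lambda\in\mathbb{C}:\, D(\lambda,v)=2\cos t\},
\]
which holds for Dirac operators with $L^2$-potentials exactly as in the Hill case.

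For the analytic setup, let $Y(x;\lambda,v)$ denote the fundamental matrix solution of $L(v)y=\lambda y$ with $Y(0)=I$, so that $D(\lambda,v)=\mathrm{tr}\,Y(\pi;\lambda,v)$. Standard Picard iteration on $[0,\pi]$ shows that $Y(\pi;\lambda,v)$ depends analytically on $v$ as an element of $L^2([0,\pi],M_2(\mathbb{C}))$ and entirely on $\lambda\in\mathbb{C}$. Since $P$ and $Q$ are entire and $\pi$-periodic, the map $\zeta\mapsto v_\zeta$ from $\mathbb{C}$ into $L^2([0,\pi],M_2(\mathbb{C}))$ is holomorphic (uniform bounds of entire functions on compact sets let one differentiate under the $L^2$-norm). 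Fixing $\lambda$, the composition
\[
g(\zeta):=D(\lambda,v_\zeta),\qquad \zeta\in\mathbb{C},
\]
is therefore an entire function of $\zeta$.

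Next I would handle the case of a real shift $\zeta=s\in\mathbb{R}$ by a direct monodromy computation. The function $W(x)=Y(x+\pi;\lambda,v)$ solves the same ODE as $Y$ (since $v(\cdot+\pi)=v$) with $W(0)=Y(\pi)=:M$, hence $W(x)=Y(x)M$, and in particular $Y(\pi+s)=Y(s)M$. On the other hand, $Z(x)=Y(x+s;\lambda,v)$ satisfies $L(v_s)Z=\lambda Z$ with $Z(0)=Y(s)$, so the fundamental solution for $v_s$ is $\widetilde Y(x)=Y(x+s)\,Y(s)^{-1}$ (invertibility of $Y(s)$ follows from $\det Y\equiv 1$, which in turn follows from the tracelessness of the coefficient matrix in the Dirac case), and its monodromy matrix is
\[
\widetilde M=\widetilde Y(\pi)=Y(\pi+s)\,Y(s)^{-1}=Y(s)\,M\,Y(s)^{-1}.
\]
Taking traces gives $D(\lambda,v_s)=D(\lambda,v)$ for every $s\in\mathbb{R}$, i.e.\ $g(s)=g(0)$ on the real axis. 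Since $g$ is entire, the identity theorem forces $g(\zeta)\equiv g(0)$ for all $\zeta\in\mathbb{C}$, which is (\ref{ii25}). Then (\ref{ii26}) is immediate from the Floquet characterization above, since both spectra are the zero sets of the same entire function $\lambda\mapsto D(\lambda,v)-2\cos t$.

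The main obstacle is the conjugation identity for the monodromy matrix $\widetilde M=Y(s)MY(s)^{-1}$: everything else is bookkeeping (analyticity in $v$, holomorphy of $\zeta\mapsto v_\zeta$, identity theorem, Floquet characterization), but the core content of the proposition is contained in the periodicity-plus-shift manipulation that yields this conjugation. Once that is in hand, invariance of the spectrum under complex translation follows painlessly, in complete parallel with the Hill case treated in the excerpt.
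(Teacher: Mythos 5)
Your proposal is correct and follows essentially the same route as the paper: the paper's (one-line) proof refers back to the Hill-case sketch, namely that $D(\lambda,v_s)=D(\lambda,v)$ for real shifts $s$ by Floquet theory, that $\zeta\mapsto D(\lambda,v_\zeta)$ is entire because $D$ depends analytically on the potential, and that the identity theorem then gives \eqref{ii25}, with \eqref{ii26} following from $Sp\,(L_t(v))=\{\lambda:\ D(\lambda,v)=2\cos t\}$. You merely fill in the details the paper leaves implicit (the monodromy conjugation $\widetilde M=Y(s)MY(s)^{-1}$, $\det Y\equiv 1$ from tracelessness, and holomorphy of $\zeta\mapsto v_\zeta$), all of which are sound.
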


As in Hill case, (\ref{ii25}) follows from Floquet theory,  and (\ref{ii26}) follows from (\ref{ii25})
as $Sp \, (L_t (v)) = \{\lambda: \; D(\lambda, v)= 2 \cos t\}. $

\section{Other two exponential term potentials}

1. Consider Hill potentials of the form
\begin{equation}
\label{6.1.1} v(x) =a e^{-2Rix} +b e^{2Six}, \quad a, b \neq 0,
\end{equation}
with $R, S \in \mathbb{N}, \; R \neq S. $ Then
\begin{equation}
\label{6.1.2}R=dr, \quad S=ds, \;\;
\text{where} \; \; r, s \;\;
\text{are coprime}.
\end{equation}

In \cite{DM27} we analyzed the asymptotics of $\beta_n^\pm (z)$ for
\begin{equation}
\label{6.1.3} n \in \Delta := (rsd) \mathbb{N}, \quad \text{i.e.,}
\;\; n= rsdm, \;\; m \in \mathbb{N},
\end{equation}
and explained (see Theorem~11 there) that

(i) there is no Riesz basis consisting of root functions of the
operator $L_{Per^+} (v);$

(ii) if $R$ and $S$ are odd, then there is no Riesz basis consisting
of root functions of the operator $L_{Per^-} (v).$

Moreover, in the proof of  \cite[Theorem 11]{DM27} we have established (but not formulated explicitly)
the following asymptotics (see \cite[Lemma 6 and (51)]{DM27}).

\begin{Proposition}
Let the functional $\beta_n^\pm $ correspond to
the Hill operator with potentials (\ref{6.1.1}).
Then we have, with the notations from (\ref{6.1.2}) and  (\ref{6.1.3}),
\begin{equation}
\label{6.1.4} \beta^+_n (z) \sim 4s^2d^2 \left ( \frac{b}{4s^2d^2}
\right )^{rm} ((rm-1)!)^{-2}, \quad n=rsdm \in \Delta,
\end{equation}
and
\begin{equation}
\label{6.1.6} \beta^-_n (z) \sim 4r^2d^2 \left ( \frac{a}{4r^2d^2}
\right )^{sm} ((sm-1)!)^{-2}, \quad n=rsdm \in \Delta.
\end{equation}
\end{Proposition}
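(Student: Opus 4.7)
The plan is to start from the explicit Fourier representation of $\beta_n^\pm(z)$ established in \cite[(2.16)--(2.33)]{DM15}, which expresses each $\beta_n^\pm(z)$ as an absolutely convergent series indexed by finite chains $j_0, j_1, \ldots, j_{N+1}$ of integers with the same parity as $n.$ The endpoints are $(j_0, j_{N+1})=(-n,n)$ for $\beta_n^+$ and $(n,-n)$ for $\beta_n^-,$ the intermediate indices satisfy $|j_i|\neq n,$ and the contribution of a chain is the product $\prod_{i=0}^{N} v_{(j_{i+1}-j_i)/2}$ divided by $\prod_{i=1}^{N}(n^2 + z - j_i^2).$ Since the potential (\ref{6.1.1}) has only two nonzero Fourier modes $v_{-R}=a$ and $v_{S}=b,$ each step $j_i\to j_{i+1}$ is forced to be either $-2R$ (with weight $a$) or $+2S$ (with weight $b$), so the series collapses to a sum over lattice walks with only two allowed step directions.

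Writing a chain for $\beta_n^+$ as having $p$ steps of $-2R$ and $q$ steps of $+2S,$ the closing condition reads $qS - pR = n.$ Substituting $R=dr,$ $S=ds,$ and $n=rsdm$ and dividing by $d$ gives $qs - pr = rsm,$ whose non-negative integer solutions form the family $(p,q)=(ts,\,rm+tr),$ $t\geq 0,$ because $(r,s)=1.$ The minimum total length is therefore $rm,$ attained uniquely at $t=0,$ corresponding to the monotone walk $-n\to -n+2S\to\cdots\to n.$ Analogously, for $\beta_n^-$ the closing condition becomes $pr - qs = rsm,$ with unique minimum-length solution $(p,q)=(sm,0)$ realized by the monotone walk $n\to n-2R\to\cdots\to -n.$

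The next step is to evaluate the contribution of this unique minimum-length walk. For $\beta_n^+$ the intermediate vertices are $j_k = -n+2Sk,$ $k=1,\ldots,rm-1,$ and a direct computation yields
$$
n^2 - j_k^2 \;=\; 4Sk(n - Sk) \;=\; 4 d^2 s^2\,k(rm-k),
$$
so the denominator product equals $(4 d^2 s^2)^{rm-1}\bigl((rm-1)!\bigr)^2.$ Together with the numerator $b^{rm}$ this produces exactly the leading constant on the right-hand side of (\ref{6.1.4}); the remaining $z$-dependence introduces only a factor $1+O(1/n),$ because $|z|$ is bounded while each denominator is of order at least $n.$ The identical calculation, with $a,r$ in place of $b,s,$ produces the leading term in (\ref{6.1.6}).

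The hard part is to show that the sum of contributions over \emph{all} non-minimal walks is $o(1)$ times the main term. Such a walk has length at least $rm+(r+s),$ and the number of walks of length $rm+t(r+s)$ is the binomial coefficient $\binom{rm+t(r+s)}{ts}.$ Each is weighted by $|a|^{ts}|b|^{rm+tr},$ and its denominator is a product of factors each of order at least $n,$ so a Stirling-type estimate bounds the ratio of the $t$-th tail block to the main term by $C^t / n^{t(r+s)}$ for some constant $C=C(a,b,r,s).$ Summing the resulting geometric series in $t$ yields an error of order $n^{-(r+s)}$ relative to the leading term, which is indeed $o(1).$ This combinatorial tail bound is precisely the content of the chain estimates carried out in the proof of \cite[Theorem~11]{DM27} (see Lemma~6 and formula~(51) there), and extracting them from that argument gives (\ref{6.1.4}) and (\ref{6.1.6}).
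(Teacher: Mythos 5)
Your proposal follows exactly the route that underlies the paper's (non-)proof: the paper offers no argument of its own for this Proposition, but simply points to Lemma~6 and formula~(51) in the proof of Theorem~11 of \cite{DM27}, and what you reconstruct --- the walk representation of $\beta_n^\pm$ from the explicit series in \cite{DM15}, the reduction of the closing condition $qS-pR=\pm n$ to $(p,q)=(ts,\,rm+tr)$ resp.\ $(sm+ts,\,tr)$ via $(r,s)=1$, the identification of the unique minimal monotone walk, and the evaluation $n^2-j_k^2=4d^2s^2\,k(rm-k)$ giving the constant $4s^2d^2\,(b/4s^2d^2)^{rm}((rm-1)!)^{-2}$ --- is precisely the computation behind those formulas. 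The main-term calculation is correct as written (up to the cosmetic point that the $z$-dependence contributes $1+O(\log n/n)$ rather than $1+O(1/n)$, since one must sum $\sum_k (k(rm-k))^{-1}=O(\log n/n)$ over the $rm-1$ denominators, not just bound each factor separately).

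The one place where your self-contained sketch would not survive being written out is the tail estimate. The inference ``each denominator factor is of order at least $n$, hence the $t$-th block is at most $C^t/n^{t(r+s)}$ times the main term'' does not work: bounding every factor below by $cn$ only shows a walk of length $\ell$ contributes $O(C^{\ell}/n^{\ell-1})$, and since the main term is of size $\asymp ((rm-1)!)^{-2}(4d^2s^2)^{-(rm-1)}|b|^{rm}$, which is \emph{super-exponentially} smaller than $n^{-(rm-1)}$, this crude bound is far too weak to dominate the non-minimal walks by the minimal one. The actual argument in \cite{DM27} compares the denominator of a long walk factor by factor with the denominator of the minimal walk (exploiting that inserting a detour of $r+s$ extra steps multiplies the denominator by at least $c\,n^{r+s}$ \emph{relative to the minimal product}, not relative to $n^{\ell-1}$), and that finer comparison is the real content of Lemma~6 there. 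Since you ultimately defer this step to \cite{DM27} --- exactly as the paper does --- the proposal as a whole is acceptable, but the heuristic you offer for that step should not be mistaken for a proof of it.
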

In view of Theorems \ref{thm50} and \ref{thm51},  the following holds.
\begin{Theorem}
Consider the Hill operator with potentials (\ref{6.1.1}).
Then, for large enough $n=mdsr, \; m \in \mathbb{N},$
\begin{equation}
\label{6.1.10} \gamma_n  \sim \frac{8srd^2
\sqrt{b^{rm}}\sqrt{a^{sm}}}{(2d)^{(r+s)m}s^{rm} r^{sm}(rm-1)!
(sm-1)!}
\end{equation}
and
\begin{equation}
\label{6.1.11} \mu_n - \lambda_n^\pm    \sim -\frac{1}{2} \left (
\frac{2sd \sqrt{b^{rm}}}{(2sd)^{rm}(rm-1)! } \pm \frac{2rd
\sqrt{a^{sm}}}{(2rd)^{sm}(sm-1)! } \right )^2,
\end{equation}
where $ \sqrt{b^{rm}}$ and $\sqrt{a^{sm}}$ are chosen appropriately.
Moreover,
\begin{equation}
\label{6.1.12} \mu_n -\frac{1}{2} ( \lambda_n^- +\lambda_n^+) \sim
-\frac{1}{2} \left (\frac{4s^2 d^2 b^{rm}}{(4s^2d^2)^{rm}[(rm-1)!]^2
} + \frac{4r^2d^2 a^{sm}}{(4r^2d^2)^{sm}[(sm-1)!]^2 } \right ).
\end{equation}

\end{Theorem}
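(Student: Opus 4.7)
The plan is to derive the three formulas (\ref{6.1.10}), (\ref{6.1.11}), (\ref{6.1.12}) by specializing Theorems~\ref{thm50} and \ref{thm51} to the subsequence $\Delta = (rsd)\mathbb{N}$ using the asymptotics (\ref{6.1.4}) and (\ref{6.1.6}) of the preceding Proposition. Everything except checking the hypotheses and a careful bookkeeping of constants is a routine substitution.

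First I would verify that the potential (\ref{6.1.1}) satisfies the assumptions (\ref{50.1}) and (\ref{50.2}) along $\Delta$. Nonvanishing at $z=z_n^*$ is immediate from (\ref{6.1.4})--(\ref{6.1.6}) since $a,b\neq 0$; these asymptotics are moreover uniform in $z$ in the disc $D_n$ of radius $\rho_n = 2(|\beta_n^-(z_n^*)|+|\beta_n^+(z_n^*)|)$, because their right-hand sides do not depend on $z$ and because the work in \cite{DM27} establishes the underlying uniform bounds on $\beta_n^\pm$ and their $z$-derivatives on $D_n$. Thus (\ref{50.2}) holds with some $\eta_n\to 0$, and Theorems~\ref{thm50} and \ref{thm51} are applicable for $n\in\Delta$.

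Next I would check the non-clustering hypotheses of Theorem~\ref{thm51}. From (\ref{6.1.4}) and (\ref{6.1.6}) we obtain
\begin{equation*}
\frac{|\beta_n^-(z_n^*)|}{|\beta_n^+(z_n^*)|}
\;\asymp\;
\frac{((rm-1)!)^2}{((sm-1)!)^2}\cdot\frac{|a|^{sm}(4s^2d^2)^{rm}}{|b|^{rm}(4r^2d^2)^{sm}}.
\end{equation*}
Since $r\neq s$, the ratio of factorials tends either to $0$ or to $\infty$, so the same is true for $|\beta_n^-(z_n^*)|/|\beta_n^+(z_n^*)|$, and hence for the moduli of $\sqrt{\beta_n^-(z_n^*)}/\sqrt{\beta_n^+(z_n^*)}$ and $\beta_n^-(z_n^*)/\beta_n^+(z_n^*)$. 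Consequently neither $1$ nor $-1$ is a cluster point of these sequences, so parts (a), (b), (c) of Theorem~\ref{thm51} all apply simultaneously.

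Finally I would carry out the substitution. Taking a square root of (\ref{6.1.4}) and (\ref{6.1.6}) and writing $(4s^2d^2)^{rm}=(2sd)^{2rm}$ and $(4r^2d^2)^{sm}=(2rd)^{2sm}$ yields, with branches chosen as in Theorem~\ref{thm50},
\begin{equation*}
\sqrt{\beta_n^+(z_n^*)}\sim\frac{2sd\sqrt{b^{rm}}}{(2sd)^{rm}(rm-1)!},
\qquad
\sqrt{\beta_n^-(z_n^*)}\sim\frac{2rd\sqrt{a^{sm}}}{(2rd)^{sm}(sm-1)!}.
\end{equation*}
Multiplying these two and using $(2sd)^{rm}(2rd)^{sm}=(2d)^{(r+s)m}s^{rm}r^{sm}$, then inserting into (\ref{50.3}), gives (\ref{6.1.10}). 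Plugging the same expressions into (\ref{50.4})--(\ref{50.8}) produces the two $\pm$ branches of (\ref{6.1.11}), and inserting (\ref{6.1.4})--(\ref{6.1.6}) directly into (\ref{500.4}) gives (\ref{6.1.12}).

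The only delicate point is the choice of branches of $\sqrt{a^{sm}}$ and $\sqrt{b^{rm}}$: Theorem~\ref{thm50} selects them (depending on $n$) so that $z_n^+$ and $z_n^-$ are the roots of (\ref{51.63}) and (\ref{51.64}) respectively, and I would indicate in the statement that this is the meaning of ``chosen appropriately''. No further work is needed beyond tracking constants, so I do not expect any serious obstacle.
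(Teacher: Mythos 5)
Your proposal is correct and is essentially the paper's own argument: the paper offers no written proof beyond the phrase ``In view of Theorems~\ref{thm50} and \ref{thm51}, the following holds,'' and your verification of (\ref{50.1})--(\ref{50.2}) on $\Delta=(rsd)\mathbb{N}$, your observation that $r\neq s$ forces $|\beta_n^-(z_n^*)|/|\beta_n^+(z_n^*)|\to 0$ or $\infty$ (so that $\pm1$ are never cluster points and all parts of Theorem~\ref{thm51} apply), and your substitution of (\ref{6.1.4})--(\ref{6.1.6}) into (\ref{50.3}), (\ref{50.4}), (\ref{50.8}) and (\ref{500.4}) are exactly the intended steps. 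The constant bookkeeping, including $(2sd)^{rm}(2rd)^{sm}=(2d)^{(r+s)m}s^{rm}r^{sm}$, checks out.
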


\bigskip

2.  Next  we consider only potentials of the form
$$
v(x)=a e^{-2ix} +b e^{2six}, \; s>2,
$$
and the set of indices
$$
\Delta = \{n= sm-1, \; m \in \mathbb{N}\}.
$$
If $s$ is even, then $\Delta $ consists of odd numbers, and if $s$ is odd then
$\Delta \cap 2\mathbb{N} \neq \emptyset$
and $\Delta \cap (2\mathbb{N} -1) \neq \emptyset.$

The following asymptotics for $\beta_n^\pm (z), \; n \in \Delta $
are established in \cite[Propositions 16 and 17]{DM27}.

\begin{Proposition} Under the above assumption,
for $n= sm-1, $
\begin{equation}
\label{6.2.1}  \beta^+_n (z) \sim \frac{-2sb^m}{(2s)^{2m} m!}
\cdot \frac{\Gamma^2 (1-\frac{1}{s} )\Gamma (m-\frac{2}{s})}
{\Gamma^2 (m-\frac{1}{s}) \Gamma (1-\frac{2}{s})},
\end{equation}
\begin{equation}
\label{6.2.2}  \beta^-_n (z) \sim \frac{a^n}{4^{n-1} [(n-1)!]^2}.
\end{equation}
\end{Proposition}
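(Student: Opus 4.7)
The plan is to follow the Fourier-method template of Sections~5 and~6.1 (and of \cite{DM27}), namely to use the explicit series expansions of $\beta_n^\pm(z)$ from \cite[(2.16)--(2.33)]{DM15}, which write each of these functionals as a convergent sum of contributions indexed by walks in the Fourier-mode graph of the potential. For $v(x)=ae^{-2ix}+be^{2six}$ only two Fourier coefficients are nonzero, so every such walk consists of steps that shift the Fourier index by $-2$ (with weight $a$) or by $+2s$ (with weight $b$); the walks for $\beta_n^-(z)$ run from $+n$ to $-n$ and those for $\beta_n^+(z)$ run from $-n$ to $+n$, and each intermediate index $j\neq\pm n$ produces a denominator factor $(n^2-j^2+z)^{-1}$. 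The task is then to identify, for each $n=sm-1\in\Delta$, the walk(s) of dominant order and to sum their contributions.

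For $\beta_n^-(z)$, a walk from $+n$ to $-n$ with $p$ steps of $-2$ and $q$ steps of $+2s$ satisfies $p-sq=n$, so the minimum-length (and thus dominant) walk is the purely ``$a$-only'' chain $n,n-2,\ldots,-n$ corresponding to $(p,q)=(n,0)$. Its contribution, obtained from the telescoping product $\prod_{k=1}^{n-1}[n^2-(n-2k)^2]=\prod_{k=1}^{n-1}4k(n-k)=4^{n-1}[(n-1)!]^2$, is exactly the right-hand side of (\ref{6.2.2}). All other walks have $q\ge 1$, hence at least $s+1$ additional steps, each introducing a denominator of magnitude $\gtrsim n$; they are of strictly lower order by a factor $O(n^{-2(s+1)})$ relative to the leading walk, and the corresponding $z$-corrections (for $|z|\le 1$) are absorbed in the $\sim$ symbol.

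For $\beta_n^+(z)$ the walk equation is $sq-p=n=sm-1$, and the unique minimum-length solution is $(p,q)=(1,m)$, yielding walks of length $m+1$ in which a single $a$-step is inserted at one of $m+1$ positions $i\in\{0,1,\ldots,m\}$ among the $m$ $b$-steps. For the walk with the $a$-step at position $i$ the $m$ intermediate modes are $\{1+s(2j-m):1\le j\le i\}\cup\{-1+s(2j-m):i\le j\le m-1\}$, and the algebraic identities
\begin{equation*}
(sm-1)^2-(1+s(2j-m))^2=4sj(s(m-j)-1),\qquad (sm-1)^2-(-1+s(2j-m))^2=4s(m-j)(sj-1)
\end{equation*}
allow the product of the $m$ denominators to be written, after extracting $4^m s^{2m}$, as $i!(m-i)!\,\Gamma(m-1/s)^2/[\Gamma(m-1/s-i)\Gamma(i-1/s)]$.

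Summing over $i\in\{0,1,\ldots,m\}$ reduces the leading term of $\beta_n^+(z)$ to a constant multiple of
\begin{equation*}
\frac{b^m}{4^m s^{2m}\,\Gamma(m-1/s)^2}\sum_{i=0}^{m}\frac{\Gamma(m-1/s-i)\,\Gamma(i-1/s)}{i!\,(m-i)!}\,.
\end{equation*}
The remaining hypergeometric sum is the real heart of the argument: after rewriting $\Gamma(i-1/s)=\Gamma(-1/s)(-1/s)_i$ and $\Gamma(m-1/s-i)=(-1)^i\Gamma(m-1/s)/(1-m+1/s)_i$ and using $\binom{m}{i}=(-1)^i(-m)_i/i!$, it collapses to the terminating hypergeometric value ${}_2F_1(-m,-1/s;1-m+1/s;1)$, which by the Chu--Vandermonde summation equals $\Gamma(-1/s)\Gamma(m-2/s)/[\Gamma(-2/s)\Gamma(m-1/s)]$; applying $\Gamma(-1/s)=-s\,\Gamma(1-1/s)$ and $\Gamma(-2/s)=-(s/2)\Gamma(1-2/s)$ then reproduces exactly the gamma-factor ratio in (\ref{6.2.1}). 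The main obstacle is thus this hypergeometric evaluation, which must be carried out carefully because both the numerator and denominator factors degenerate individually; a secondary, more routine obstacle is the uniform bound, for $|z|\le 1$ and large $n=sm-1$, showing that walks with $q\ge m+1$ contribute $o(1)$ relative to the leading walks, which follows because every additional step introduces an intermediate denominator of size $\gtrsim n$.
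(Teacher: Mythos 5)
The paper itself offers no proof of this Proposition: it is quoted from \cite[Propositions 16 and 17]{DM27}, so your walk-expansion computation is in effect a reconstruction of the argument of the cited source rather than a comparison against anything proved in the present text. As such it is essentially sound: the identification of the minimal walks (the pure $a$-chain of length $n$ for $\beta_n^-$; for $\beta_n^+$ with $n=sm-1$ the $m+1$ walks obtained by inserting a single $a$-step among $m$ $b$-steps, since $sq-p=n$ forces $q\ge m$, $p\ge 1$), the factorizations $n^2-(1+s(2j-m))^2=4sj(s(m-j)-1)$ and $n^2-(-1+s(2j-m))^2=4s(m-j)(sj-1)$, the reduction of the denominator product to $4^ms^{2m}\,i!(m-i)!\,\Gamma(m-1/s)^2/[\Gamma(m-1/s-i)\Gamma(i-1/s)]$, and the Chu--Vandermonde evaluation ${}_2F_1(-m,-1/s;1-m+1/s;1)=\Gamma(-1/s)\Gamma(m-2/s)/[\Gamma(-2/s)\Gamma(m-1/s)]$ all check out; with $\Gamma(-1/s)=-s\,\Gamma(1-1/s)$ and $\Gamma(-2/s)=-(s/2)\Gamma(1-2/s)$ one lands exactly on the stated gamma ratio and on the prefactor $-2s$.

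Two substantive remarks. First, your computation actually yields the prefactor $-2s\,a\,b^m$, not $-2s\,b^m$: every admissible walk for $\beta_n^+$ contains at least one $a$-step, so $\beta_n^+$ is divisible by $a$, and the printed \eqref{6.2.1} is missing this factor. Do not hide it in an unspecified ``constant multiple'' --- your version (with the $a$) is the one consistent with the $\sqrt{-ab^m}$ appearing in \eqref{6.2.10}--\eqref{6.2.11}, whereas \eqref{6.2.12} as printed inherits the omission. Second, your dismissal of the non-minimal walks is too quick. For $\beta_n^+$ a walk with $q=m+1$ $b$-steps carries $p=s+1$ $a$-steps, hence $s+1$ extra denominators, but there are $\binom{m+s+2}{s+1}\asymp n^{s+1}$ such walks, so a per-walk comparison in which each extra denominator is only credited with size $\gtrsim n$ does not by itself give smallness; one needs either the finer observation that most denominators along a walk are of size $\asymp n^2$, or the quantitative bounds on the tails $\sum_{k>k_0}S_k^\pm$ from \cite{DM15,DM27}. (Your stated relative gain $O(n^{-2(s+1)})$ for the $\beta_n^-$ tail also overstates the per-denominator size: the denominators adjacent to $\pm n$ are only of order $n$.) This tail estimate is indeed carried out in the cited literature and is routine there, but it is the one place where your sketch, taken literally, does not close.
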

Applying Theorems \ref{thm50} and \ref{thm51}, we obtain the following.

\begin{Theorem}
Consider the Hill operator with potential
$v(x)=a e^{-2ix} +b e^{2six}, \; s>2. $  Then, for $n=sm-1,$
\begin{equation}
\label{6.2.10}
\gamma_n  \sim  2
\frac{\Gamma (1-\frac{1}{s} )\Gamma^{1/2} (m-\frac{2}{s})}
{\Gamma (m-\frac{1}{s}) \Gamma^{1/2} (1-\frac{2}{s})} \cdot
\frac{(2s)^{1/2} \sqrt{-ab^m}}{(2s)^m (m!)^{1/2}}  \cdot \frac{\sqrt{a^n}}{2^{n-1}(n-1)!}
\end{equation}
and
\begin{equation}
\label{6.2.11}  \mu_n - \lambda_n^\pm \sim
-\frac{1}{2} \left ( \frac{\Gamma (1-\frac{1}{s} )\Gamma^{1/2} (m-\frac{2}{s})}
{\Gamma (m-\frac{1}{s}) \Gamma^{1/2} (1-\frac{2}{s})} \cdot
\frac{(2s)^{1/2} \sqrt{-ab^m} }{(2s)^m (m!)^{1/2}}
\pm
\frac{\sqrt{a^n}}{2^{n-1}(n-1)!} \right )^2,
\end{equation}
where $\sqrt{-ab^m}$ and $\sqrt{a^n} $ are  chosen appropriately.
Moreover
\begin{equation}
\label{6.2.12}  \mu_n -\frac{1}{2} ( \lambda_n^- +\lambda_n^+) \sim
\frac{sb^m}{(2s)^{2m} m!} \frac{\Gamma^2 (1-\frac{1}{s} )\Gamma (m-\frac{2}{s})}
{\Gamma^2 (m-\frac{1}{s}) \Gamma (1-\frac{2}{s})} - \frac{2a^n}{4^{n} [(n-1)!]^2}.
\end{equation}

\end{Theorem}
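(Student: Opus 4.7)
The plan is to obtain (\ref{6.2.10})--(\ref{6.2.12}) by feeding the pointwise asymptotics (\ref{6.2.1})--(\ref{6.2.2}) of $\beta_n^\pm(z_n^*)$ into the master formulas of Theorem \ref{thm50} and Theorem \ref{thm51}, applied with the index set $\Delta=\{n=sm-1:\,m\in\mathbb{N}\}$. Everything then reduces to routine algebraic substitution, once the hypotheses of the two abstract theorems are verified and the relevant cluster-point conditions on the ratio $\sqrt{\beta_n^-(z_n^*)}/\sqrt{\beta_n^+(z_n^*)}$ are checked.

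First I would check (\ref{50.1}) and (\ref{50.2}) on $\Delta$. The nonvanishing (\ref{50.1}) is immediate from (\ref{6.2.1})--(\ref{6.2.2}), since $a,b\neq 0$ and the Gamma factor in (\ref{6.2.1}) is bounded below for large $m$. For (\ref{50.2}) I would appeal to the proofs of Propositions 16 and 17 of \cite{DM27}: the leading-order asymptotics obtained there are in fact uniform in $z$ on any fixed disc $|z|\leq C$, and since by (\ref{521a}) we have $\rho_n\to 0$, the disc $|z-z_n^*|\leq 2\rho_n$ eventually sits inside such a fixed disc. Next I would check the cluster-point hypotheses of Theorem \ref{thm51}. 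Using Stirling's formula, the ratio $|\beta_n^-(z_n^*)|/|\beta_n^+(z_n^*)|$ is, up to exponential and polynomial factors in $m$, of order $m!/[(sm-1)!]^2$, which decays super-exponentially because $s\geq 3$. Consequently $|\sqrt{\beta_n^-(z_n^*)}/\sqrt{\beta_n^+(z_n^*)}|\to 0$ along $\Delta$, so neither $+1$ nor $-1$ is a cluster point of that ratio, and $-1$ is not a cluster point of $\beta_n^-(z_n^*)/\beta_n^+(z_n^*)$ either. Hence parts (a), (b) and (c) of Theorem \ref{thm51} are all available.

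With these verifications in hand, the substitution step is mechanical. Fix the branches
\[
\sqrt{\beta_n^+(z_n^*)}\sim \frac{(2s)^{1/2}\sqrt{-ab^m}}{(2s)^m (m!)^{1/2}}\cdot
\frac{\Gamma(1-\tfrac{1}{s})\,\Gamma^{1/2}(m-\tfrac{2}{s})}{\Gamma(m-\tfrac{1}{s})\,\Gamma^{1/2}(1-\tfrac{2}{s})},
\qquad
\sqrt{\beta_n^-(z_n^*)}\sim \frac{\sqrt{a^n}}{2^{n-1}(n-1)!},
\]
in agreement with (\ref{6.2.1})--(\ref{6.2.2}) and with the sign convention of Theorem \ref{thm50}. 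Then (\ref{6.2.10}) is a direct rewrite of the conclusion $\gamma_n\sim 2\sqrt{\beta_n^-(z_n^*)}\sqrt{\beta_n^+(z_n^*)}$ of Theorem \ref{thm50}; formula (\ref{6.2.11}) with $\lambda_n^+$ (respectively $\lambda_n^-$) is (\ref{50.4}) (respectively (\ref{50.8})) with the Hill sign, and (\ref{6.2.12}) is (\ref{500.4}). No further identities are needed, although (\ref{6.2.11}) can also be derived by combining (\ref{6.2.10}) with (\ref{6.2.12}), which provides a useful consistency check.

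The main obstacle I anticipate is the uniformity required by (\ref{50.2}). The published formulas (\ref{6.2.1}) and (\ref{6.2.2}) are stated only at $z=z_n^*$ (or on the fixed unit disc), whereas (\ref{50.2}) asks for the multiplicative error $1+O(\eta_n)$ on the shrinking disc $|z-z_n^*|\leq 2\rho_n$. In principle this is a soft step: analyticity of $\beta_n^\pm(z)$ together with the derivative bound (\ref{522}) turns any pointwise asymptotic on $|z|\leq C$ into the desired uniform one since $\rho_n\to 0$. Nevertheless one should revisit the series expansions in \cite{DM27} to confirm that the remainder terms there are genuinely multiplicative of the form $(1+o(1))$, uniformly on a fixed disc, rather than merely additive. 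Once this uniformity is in place, the remainder of the argument is a direct concatenation of Theorems \ref{thm50} and \ref{thm51} with the stated asymptotics, and the proof is complete.
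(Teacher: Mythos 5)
Your proposal is correct and follows exactly the paper's route: the paper's entire justification is the single sentence ``Applying Theorems \ref{thm50} and \ref{thm51}, we obtain the following,'' and your verification of (\ref{50.1}), (\ref{50.2}) and the cluster-point hypotheses is precisely the unwritten content of that sentence. You also rightly locate the one nontrivial point, namely that (\ref{50.2}) must be extracted from the uniformity in $z$ of the asymptotics in \cite{DM27} rather than from Remark~\ref{R2}, since here $|\beta_n^-(z_n^*)|/|\beta_n^+(z_n^*)|\to 0$ along $\Delta$ and the comparability condition (\ref{50.2a}) fails.
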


\section{ Asymptotics for Hill operators with potentials
$v(x) = a e^{-2ix} + b e^{2ix} + A e^{-4ix} + B e^{4ix}$}

When analyzing  potentials  of the form
\begin{equation}
\label{5.1} v= a e^{-2ix} + b e^{2ix} +A e^{-4ix} + B e^{4ix}, \quad
a,b,A,B \neq 0,
\end{equation}
it is convenient to change the parameters in (\ref{5.1}) by setting
 \begin{equation}
\label{5.10} A= -\alpha^2, \quad a= -2\tau \alpha, \quad B = -
\beta^2, \quad b= -2\sigma \beta.
\end{equation}
In these notations the following holds (see Proposition 20 in
\cite{DM25}, the self-adjoint case was done in  \cite{DM10} ).
\begin{Proposition}
\label{prop5.6} If $\tau $ and $ \sigma $  are not odd integers then
for even $n\to \infty$
\begin{equation} \label{5.51}
\beta_n^+ (z)=
\frac{4(i\beta/2)^n}{((n-2)!!)^2} \, \cos \left ( \frac{\pi
\sigma}{2}
 \right ) \left( 1+O \left(\frac{\log n}{n}  \right) \right ),
\quad |z| \leq 1,
\end{equation}
\begin{equation}
\label{5.51a} \beta_n^- (z)= \frac{4(i\alpha/2)^n}{((n-2)!!)^2} \,
\cos \left ( \frac{\pi \tau}{2}
 \right ) \left( 1+O \left(\frac{\log n}{n}  \right) \right ),
 \quad |z| \leq 1,
\end{equation}
and if $\tau $ and $ \sigma $  are not even integers then for odd
$n\to \infty$
\begin{equation}
\label{5.52} \beta_n^+ (z) = \frac{4i(i\beta/2)^n}{((n-2)!!)^2}
\;\frac{2}{\pi} \sin \left ( \frac{\pi \sigma}{2}  \right ) \left(
1+O \left(\frac{\log n}{n}  \right) \right ), \quad |z| \leq 1,
\end{equation}
\begin{equation}
\label{5.52a} \beta_n^- (z)= \frac{4i(i\alpha/2)^n}{((n-2)!!)^2}
\;\frac{2}{\pi} \sin \left ( \frac{\pi \tau}{2}  \right ) \left( 1+O
\left(\frac{\log n}{n}  \right) \right ), \quad |z| \leq 1.
\end{equation}
\end{Proposition}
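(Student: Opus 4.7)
The plan is to expand $\beta_n^\pm(z)$ using the explicit Fourier series representation from \cite[(2.16)--(2.33)]{DM15}, which presents $\beta_n^+(z)$ as a convergent sum over finite lattice walks in $2\mathbb{Z}$ joining $-n$ to $n$, whose individual steps lie in the Fourier support $\{\pm 2,\pm 4\}$ of $v$. Each walk contributes the product of its edge weights (the Fourier coefficients $a,b,A,B$) divided by a product of energy denominators $n^2-m_j^2+z$ associated to the intermediate vertices $m_j$. The whole point of the reparametrization (\ref{5.10}) is that after substitution each walk's contribution factorises cleanly into trigonometric data in $\sigma$ (for $\beta_n^+$) or $\tau$ (for $\beta_n^-$).

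First I would isolate the extreme walk. For even $n$ it consists of $n/2$ steps of size $+4$ weighted by $B=-\beta^2$; its denominator product telescopes to an explicit multiple of $((n-2)!!)^2$, yielding the main factor $4(i\beta/2)^n/((n-2)!!)^2$ in (\ref{5.51}). For odd $n$ no such walk exists, and the shortest admissible walks contain exactly one size-$2$ step inserted in one of $\sim n$ positions; summing over those positions with weight $b=-2\sigma\beta$ and evaluating the (logarithmically large) central denominator forces the extra factor of $i$ and the $\tfrac{2}{\pi}\sin$-type structure visible in (\ref{5.52}).

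Next I would collect the subdominant corrections coming from walks that swap a $+4$-step for a $(+2,+2)$ pair, or insert balanced $(+2,-2)$ zigzags, and so on. Under the substitution $b=-2\sigma\beta$, $B=-\beta^2$ each such insertion contributes an even polynomial in $\sigma$ times a combinatorial weight, and summing over all insertions of $2k$ additional $\pm 2$-edges produces, uniformly on $|z|\le 1$ and with an $O(\log n/n)$ error, the Taylor series of $\cos(\pi\sigma/2)$ in the even-$n$ case and of $(2/\pi)\sin(\pi\sigma/2)$ in the odd-$n$ case. The parity dichotomy simply reflects that the total displacement $2n$ forces the parity of the number of $\pm 2$-steps to match that of $n$. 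Walks that also use $\alpha$-edges (weights $a$ or $A$) are suppressed by at least one factor of $(|a|+|A|)/n$ per such edge and fall into the remainder; the formulas (\ref{5.51a}) and (\ref{5.52a}) for $\beta_n^-(z)$ follow by the symmetric argument with walks going from $n$ to $-n$ and $(\alpha,\tau)$ replacing $(\beta,\sigma)$.

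The main obstacle is executing the double sum (over walks and over size-$2$ insertions) in the right order so that the closed-form trigonometric prefactor emerges with a quantitative $O(\log n/n)$ remainder uniform in $z$ for $|z|\le 1$. The uniformity in $z$ --- essential for applying the result in Theorems~\ref{thm50} and \ref{thm51} --- rests on the fact that each denominator $n^2-m_j^2+z$ depends linearly and mildly on $z$ across the disc. The logarithmic factor in the error reflects a near-resonance when an intermediate vertex approaches $\pm 1$, already treated in the self-adjoint case in \cite{DM10}. The hypothesis that $\tau$ and $\sigma$ avoid odd (respectively even) integers is precisely the nonvanishing condition for the relevant trigonometric prefactor, ensuring that the asymptotics describe the genuine leading order of $\beta_n^\pm(z)$.
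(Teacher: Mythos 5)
The paper offers no proof of Proposition~\ref{prop5.6} at all --- it simply cites Proposition~20 of \cite{DM25} (with the self-adjoint case done in \cite{DM10}) --- and your outline reproduces exactly the method of those references: the lattice-walk expansion of $\beta_n^\pm(z)$ over paths with steps in the Fourier support $\{\pm2,\pm4\}$, the extreme all-$(\pm4)$ walk whose telescoping denominators give the factor $4(i\beta/2)^n/((n-2)!!)^2$, and the resummation over insertions of $\pm2$-edges which the substitution (\ref{5.10}) collapses into the Taylor series of $\cos(\pi\sigma/2)$ or $(2/\pi)\sin(\pi\sigma/2)$ according to the parity of $n$. One small correction: the near-small denominators responsible for the $O(\log n/n)$ remainder occur when an intermediate vertex $j$ approaches $\pm n$ (where $n^2-j^2+z\sim 4n$), not when it approaches $\pm1$.
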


Theorems \ref{thm50} and \ref{thm51} imply the
following.

\begin{Theorem}
Let $L (v)$ be the Hill operator with a potential $v$ given by
(\ref{5.1}), and let $\alpha, \beta, \tau, \sigma $ are defined by
(\ref{5.10}). Suppose that $|A|\neq |B|$ (i.e., $|\alpha| \neq
|\beta|$).

(a) If $\tau $ and $ \sigma $  are not odd integers then for even
$n\to \infty $
\begin{equation}
\label{5.70} \gamma_n \sim  \frac{8 }{2^n((n-2)!!)^2}
\sqrt{(i\beta)^n \cos \left ( \frac{\pi \sigma}{2} \right )}
\sqrt{(i\alpha)^n\cos \left ( \frac{\pi \tau}{2} \right )}\,,
\end{equation}
\begin{equation}
\label{5.71} \mu_n - \lambda_n^\pm  \sim  \frac{-2}{2^n((n-2)!!)^2}
\left (\sqrt{(i\beta)^n \cos \left ( \frac{\pi \sigma}{2} \right )}
\pm \sqrt{(i\alpha)^n\cos \left ( \frac{\pi \tau}{2} \right )}\,
\right )^2,
\end{equation}
where $\sqrt{(i\beta)^n \cos \left ( \frac{\pi \sigma}{2} \right )}$
and $ \sqrt{(i\alpha)^n\cos \left ( \frac{\pi \tau}{2} \right )} $
are chosen  appropriately.
Moreover,
\begin{equation}
\label{5.72} \mu_n -\frac{1}{2}(\lambda_n^+ + \lambda_n^-) \sim
\frac{-2}{2^n((n-2)!!)^2} \left [(i\beta)^n \cos \left ( \frac{\pi
\sigma}{2} \right ) + (i\alpha)^n\cos \left ( \frac{\pi \tau}{2}
\right )\, \right ]
\end{equation}

(b) If $\tau $ and $ \sigma $  are not even integers then for odd
$n\to \infty $
\begin{equation}
\label{5.80} \gamma_n \sim \frac{2}{\pi}\cdot \frac{8
}{2^n((n-2)!!)^2} \sqrt{i(i\beta)^n \sin \left ( \frac{\pi
\sigma}{2} \right )} \sqrt{i(i\alpha)^n \sin \left ( \frac{\pi
\tau}{2} \right )},
\end{equation}
\begin{equation}
\label{5.81} \mu_n -\lambda_n^\pm \sim -\frac{2}{\pi}\cdot \frac{2
}{2^n((n-2)!!)^2} \left (\sqrt{i(i\beta)^n \sin \left ( \frac{\pi
\sigma}{2} \right )} \pm \sqrt{i(i\alpha)^n \sin \left ( \frac{\pi
\tau}{2} \right )}   \right )^2,
\end{equation}
where $\sqrt{i(i\beta)^n \sin \left ( \frac{\pi \sigma}{2} \right
)}$ and $\sqrt{i(i\alpha)^n \sin \left ( \frac{\pi \tau}{2} \right
)}$ are chosen appropriately.
 Moreover,
\begin{equation}
\label{5.82} \mu_n -\frac{1}{2}(\lambda_n^+ + \lambda_n^-) \sim
-\frac{2}{\pi}\cdot \frac{2 }{2^n((n-2)!!)^2} \left [i(i\beta)^n
\sin \left ( \frac{\pi \sigma}{2} \right ) + i(i\alpha)^n \sin \left
( \frac{\pi \tau}{2} \right ) \right ].
\end{equation}
\end{Theorem}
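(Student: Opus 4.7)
The plan is to apply Theorem \ref{thm50} and Theorem \ref{thm51} to the potential $v$ in (\ref{5.1}), taking $\Delta = 2\mathbb{N}$ in case (a) and $\Delta = 2\mathbb{N}-1$ in case (b). The argument reduces to three routine steps: (i) verify that the hypotheses (\ref{50.1})--(\ref{50.2}) hold on $\Delta$; (ii) check the cluster-point conditions so that all three parts (a), (b), (c) of Theorem \ref{thm51} apply simultaneously; (iii) substitute the explicit expressions for $\beta_n^\pm (z_n^*)$ supplied by Proposition \ref{prop5.6}.

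For step (i), in case (a) the factors $\cos(\pi\sigma/2)$ and $\cos(\pi\tau/2)$ appearing in (\ref{5.51}), (\ref{5.51a}) are nonzero because $\sigma, \tau$ are not odd integers, and $\alpha, \beta \neq 0$ since $A, B \neq 0$; hence $\beta_n^\pm (z_n^*) \neq 0$ for large even $n$, which gives (\ref{50.1}). Case (b) is analogous with sines. For (\ref{50.2}), note that the asymptotic formulas (\ref{5.51})--(\ref{5.52a}) hold \emph{uniformly} for $|z|\le 1$ with relative error $O(\log n/n)$; since $\rho_n = 2(|\beta_n^-(z_n^*)|+|\beta_n^+(z_n^*)|)$ decays super-exponentially and $|z_n^*| \le \rho_n$, the disc $D_n$ in (\ref{521}) is contained in $\{|z|\le 1\}$ for large $n$. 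Consequently $\beta_n^\pm(z) = \beta_n^\pm(z_n^*)(1+O(\log n/n))$ on $D_n$, which is precisely (\ref{50.2}).

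For step (ii), the hypothesis $|A| \neq |B|$ translates to $|\alpha| \neq |\beta|$. From (\ref{5.51})--(\ref{5.51a}) (or (\ref{5.52})--(\ref{5.52a})) the modulus of the ratio $\sqrt{\beta_n^-(z_n^*)}/\sqrt{\beta_n^+(z_n^*)}$ behaves like $|\alpha/\beta|^{n/2}$ multiplied by bounded nonzero factors, so it tends either to $0$ or to $\infty$. In particular neither $1$ nor $-1$ is a cluster point, and the same holds for $\beta_n^-(z_n^*)/\beta_n^+(z_n^*)$. Hence parts (a), (b), and (c) of Theorem \ref{thm51} are all applicable on $\Delta$.

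For step (iii), Theorem \ref{thm50} gives $\gamma_n \sim 2\sqrt{\beta_n^-(z_n^*)}\sqrt{\beta_n^+(z_n^*)}$; inserting (\ref{5.51}), (\ref{5.51a}) and extracting the factor $4/((n-2)!!)^2$ yields (\ref{5.70}), and the same substitution in (\ref{5.52}), (\ref{5.52a}) produces (\ref{5.80}). Next, combining (\ref{50.4}) and (\ref{50.8}) of Theorem \ref{thm51} gives the unified formula
\[
\mu_n - \lambda_n^\pm \sim -\tfrac{1}{2}\bigl(\sqrt{\beta_n^+(z_n^*)} \pm \sqrt{\beta_n^-(z_n^*)}\bigr)^2,
\]
which upon substitution becomes (\ref{5.71}) in case (a) and (\ref{5.81}) in case (b). Finally, (\ref{500.4}) supplies (\ref{5.72}) and (\ref{5.82}) directly. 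The branches $\sqrt{\beta_n^\pm(z_n^*)}$ are chosen as in Theorem \ref{thm50}; since Theorem \ref{thm51} states the conclusions with "appropriately chosen" branches, no further work is required. There is no substantial obstacle: the only mildly delicate point is consistent branch bookkeeping of the square roots of $(i\alpha)^n\cos(\pi\tau/2)$ and $(i\beta)^n\cos(\pi\sigma/2)$ (respectively the sine analogues), which is automatic from the construction (\ref{51.630}) in the proof of Theorem \ref{thm50}.
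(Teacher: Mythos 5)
Your proposal is correct and follows exactly the route the paper intends: the paper states this theorem with no proof beyond ``Theorems \ref{thm50} and \ref{thm51} imply the following,'' and your three steps (verifying (\ref{50.1})--(\ref{50.2}) from the uniformity in $|z|\le 1$ of Proposition~\ref{prop5.6}, ruling out $\pm 1$ as cluster points via $|\alpha|\neq|\beta|$, and substituting the asymptotics) are precisely the omitted routine verifications, carried out correctly. The only cosmetic quibble is the assertion $|z_n^*|\le\rho_n$; what you actually need, and what holds, is $|z_n^*|\to 0$ together with $\rho_n\to 0$, which already gives $D_n\subset\{|z|\le 1\}$ for large $n$.
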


\section{Dirac operators with potentials
that are trigonometric polynomials}

Consider the Dirac operator with potentials
$v(x)= \begin{pmatrix}  0  &  P(x) \\ Q(x) &  0  \end{pmatrix} $ of the form
\begin{equation}
\label{7.0}
 P(x) =a e^{-2ix} + A e^{2ix}, \quad
Q(x) = b e^{-2ix} + B e^{2ix} , \quad
a, A, b, B \in \mathbb{C}\setminus \{0\}.
\end{equation}
In \cite{DM8,DM9} we have studied these operators in the self-adjoint case
 when $b=\overline{a}  $  and $B=\overline{A}. $
 In particular, we have proved
that all even spectral gaps vanish and established asymptotic formulas
for the odd spectral gaps.  The same approach leads to the following.

\begin{Proposition}
Consider the Dirac operator with potential (\ref{7.0}). Then the corresponding
functionals $\beta_n^\pm (z) $  are well defined for $|n| > N^*$  (where $N^* $
depends on $v$), and
\begin{equation}
\label{7.00}
 \beta_n^\pm (z) \equiv 0  \quad \text{for} \;\; n \in  2\mathbb{Z},
\end{equation}
\begin{equation}
\label{7.1}
\beta^+_n (z) \sim   \begin{cases}
\frac{A^m B^{m+1}}{4^{2m} (m!)^2}   &    \text{for}
\; n=2m-1, \; m \in \mathbb{N},\\
 \frac{a^m b^{m+1}}{4^{2m} (m!)^2}   &    \text{for}
 \; n=-(2m-1), \; m \in \mathbb{N},
\end{cases}
\end{equation}
\begin{equation}
\label{7.2}
\beta^-_n (z) \sim   \begin{cases}
\frac{a^{m+1} b^{m}}{4^{2m} (m!)^2}   &    \text{for}
\; n=2m+1, \; m \in \mathbb{N},\\
 \frac{A^{m+1} B^{m}}{4^{2m} (m!)^2}   &    \text{for}
 \; n=-(2m+1), \; m \in \mathbb{N}.
\end{cases}
\end{equation}

\end{Proposition}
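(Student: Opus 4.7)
The plan is to expand $\beta_n^\pm(z)$ as a Fourier series using the explicit formulas (2.71)--(2.80) of \cite{DM15}, and to exploit the fact that for the potential (\ref{7.0}) only four Fourier modes of $P$ and $Q$ are nonzero. Each summand in the series is a product of Fourier coefficients of $P$ and $Q$ along a finite chain of indices, divided by a product of small denominators of the form $n - k + z$. After substituting $P_{\pm 1} \in \{a,A\}$ and $Q_{\pm 1} \in \{b,B\}$ (in the natural indexing), the whole problem becomes a combinatorial sum over admissible index chains.

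First I would carry out the parity analysis needed for (\ref{7.00}): since each $P$- or $Q$-mode shifts the running Fourier index by $\pm 2$, and the off-diagonal structure of the Dirac potential forces strict alternation between $P$-letters and $Q$-letters along a chain, any chain contributing to $\beta_n^\pm$ must connect $-n$ and $+n$ by an alternating sequence of $\pm 2$ steps, so the admissible lengths and total shifts have a rigid parity. A direct inspection shows that no admissible chain exists when $n$ is even, whence $\beta_n^\pm \equiv 0$ on $2\mathbb{Z}$. Alternatively, one may derive the same vanishing from the symmetry $v(x+\pi/2) = -\sigma v(x) \sigma$ with $\sigma = \operatorname{diag}(1,-1)$, which produces an operator conjugation that separates the even and odd Fourier sectors.

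For odd $n$ I would then identify, in each of the four cases $n = \pm(2m-1)$ and $n = \pm(2m+1)$, the unique minimal monotone chain from $-n$ to $+n$. For $n = 2m-1 > 0$ contributing to $\beta_n^+$, this chain uses the $+2$-shift from $P$ (coefficient $A$) $m$ times and the $+2$-shift from $Q$ (coefficient $B$) $m+1$ times, producing a numerator $A^m B^{m+1}$ and a denominator that telescopes through the small divisors $(n-k+z)$ to yield the factor $4^{2m}(m!)^2$ at leading order; the resulting expression agrees with (\ref{7.1}), and the remaining three cases are mirror images. The main obstacle, as always in this method, is the remainder estimate: one must bound the contribution of all longer chains containing backtrackings (pairs of $+2$ and $-2$ steps producing factors $aA$ or $bB$) and show that it is lower order than the leading minimal-chain term. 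This is done as in \cite{DM8,DM9} and \cite[Lemma 6]{DM27}: each backtracking contributes an extra factor $O(1/n^2)$ times a product of Fourier coefficients, so summing the geometric majorant yields a $1 + O(\log n / n)$ correction uniformly for $|z| \le 1/2$, which is exactly the content of the $\sim$ symbol in (\ref{7.1})--(\ref{7.2}).
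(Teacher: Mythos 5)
The paper offers no self-contained proof of this Proposition: it only remarks that the self-adjoint case ($b=\overline{a}$, $B=\overline{A}$) was treated in \cite{DM8,DM9} and that ``the same approach leads to the following.'' Your reconstruction of that approach --- expanding $\beta_n^\pm(z)$ by the explicit series (2.71)--(2.80) of \cite{DM15}, obtaining (\ref{7.00}) from a parity obstruction on admissible index chains, isolating the unique minimal chain for the leading term, and bounding the longer chains as in \cite{DM8,DM9} and \cite{DM27} --- is precisely the method the authors invoke, and the overall architecture is sound.

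However, the bookkeeping of the minimal chain does not close as you have written it. For odd $n>0$ the minimal admissible chain joining the two free modes consists of exactly $n$ alternating letters ($\tfrac{n+1}{2}$ drawn from one of $P,Q$ and $\tfrac{n-1}{2}$ from the other) separated by $n-1$ small divisors. A monomial $A^mB^{m+1}$ has $2m+1$ letters and hence $2m$ divisors, and the telescoping product of those divisors (each of the values $4,8,\dots,4m$ occurs twice) equals $4^{2m}(m!)^2$ precisely when $n=2m+1$, not $n=2m-1$ as you assert: a chain of length $n=2m-1$ cannot carry $2m+1$ letters. You have copied the index convention of (\ref{7.1}) without checking it against the chain length; carrying the computation through shows that (\ref{7.1}) as printed is inconsistent with (\ref{7.10}), which via $\gamma_n\sim 2\sqrt{\beta_n^-}\sqrt{\beta_n^+}$ and (\ref{7.2}) forces $\beta_n^+\sim A^mB^{m+1}/\bigl(4^{2m}(m!)^2\bigr)$ at $n=2m+1$. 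Redo the count and state the asymptotics of $\beta_n^+$ for $n=\pm(2m+1)$. Separately, your ``alternative'' symmetry proof of (\ref{7.00}) is false as stated: for this potential $\sigma v(x)\sigma=-v(x)$ with $\sigma=\operatorname{diag}(1,-1)$, while $v(x+\pi/2)=-v(x)$, so the claimed identity $v(x+\pi/2)=-\sigma v(x)\sigma$ would read $-v=v$. The direct parity argument --- a chain contributing to $\beta_n^\pm$ must have an odd number of steps, while the total displacement forces the number of steps to be congruent to $n$ modulo $2$ --- is correct and suffices; drop or repair the symmetry remark.
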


Now Theorems \ref{thm50} and  \ref{thm51} imply the
following.
\begin{Theorem}
Consider the Dirac operator with potential (\ref{7.0}). Then
\begin{equation}
\label{7.10}
\gamma_n  \sim   \begin{cases}   \frac{2\sqrt{A^m B^{m+1}} \sqrt{a^{m+1} b^m}}
{4^{2m} (m!)^2}   &    \text{for}  \; n=2m+1, \; m \in \mathbb{N},\\
 \frac{2\sqrt{a^m b^{m+1}}\sqrt{A^{m+1}B^m}}{4^{2m} (m!)^2}   &
 \text{for}  \; n=-(2m+1), \; m \in \mathbb{N},
\end{cases}
\end{equation}
\begin{equation}
\label{7.11}
\mu_n - \lambda_n^\pm
\sim   \begin{cases}   -\frac{1}{2} \left (\frac{\sqrt{A^m B^{m+1}} }
{4^{m} m!}    \pm  \frac{\sqrt{a^{m+1} b^m} }
{4^{m} m!}
   \right )^2   &    \text{for}  \; n=2m+1, \; m \in \mathbb{N},\\
\frac{1}{2} \left ( \frac{\sqrt{a^m b^{m+1}}}{4^{2m} (m!)^2} \pm
\frac{\sqrt{A^{m+1}B^m}}{4^{2m} (m!)^2}  \right )^2 &
\text{for}  \; n=-(2m+1), \; m \in \mathbb{N},
\end{cases}
\end{equation}
where $\sqrt{A^m B^{m+1}}$  and $ \sqrt{A^{m+1}B^m}$
are chosen appropriately.
Moreover,
\begin{equation}
\label{7.12}
 \mu_n -\frac{1}{2}(\lambda_n^+ + \lambda_n^-)
\sim   \begin{cases}   \frac{1}{2} \left (\frac{A^m B^{m+1} }
{4^{2m} (m!)^2}    +  \frac{a^{m+1} b^m}
{4^{2m} (m!)^2}
   \right )   &    \text{for}  \; n=2m+1, \; m \in \mathbb{N},\\
\frac{1}{2} \left ( \frac{a^m b^{m+1}}{4^{2m} (m!)^2} +
\frac{A^{m+1}B^m}{4^{2m} (m!)^2}  \right )&
\text{for}  \; n=-(2m+1), \; m \in \mathbb{N}.
\end{cases}
\end{equation}
\end{Theorem}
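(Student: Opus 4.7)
The proof proceeds by direct application of Theorems \ref{thm50} and \ref{thm51}, using the $\beta_n^\pm$-asymptotics supplied by the Proposition. I would take $\Delta^+ = \{2m+1 : m \in \mathbb{N}\}$ for the first case of (\ref{7.10})--(\ref{7.12}) and $\Delta^- = \{-(2m+1) : m \in \mathbb{N}\}$ for the second; each of these consists of odd integers, so antiperiodic boundary conditions apply throughout, and the case $n \in 2\mathbb{Z}$ is vacuous since (\ref{7.00}) gives $\beta_n^\pm \equiv 0$ (in which case $\lambda_n^+ = \lambda_n^-$ and $\mu_n$ coincides with them).

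The first task is to check the hypotheses (\ref{50.1}) and (\ref{50.2}) of Theorem~\ref{thm50}, separately on $\Delta^+$ and $\Delta^-$. Condition (\ref{50.1}) is immediate from (\ref{7.1}) and (\ref{7.2}) since $a,A,b,B$ are all non-zero. For (\ref{50.2}), the key observation is that the asymptotic equivalences in (\ref{7.1}) and (\ref{7.2}) are obtained (as in the derivations cited from \cite{DM8, DM9}) as \emph{uniform} asymptotic expansions on a fixed disc $\{|z| \le 1/2\}$ around the origin. Since $|z_n^*|$ and the radius $2(|\beta_n^-(z_n^*)|+|\beta_n^+(z_n^*)|)$ both decay super-exponentially in $|n|$, for large $|n|$ the disc $D_n$ in (\ref{521}) is contained in $\{|z| \le 1/2\}$, and dividing the uniform expansion at a generic $z \in D_n$ by the expansion at $z_n^*$ yields $\beta_n^\pm(z)/\beta_n^\pm(z_n^*) = 1 + O(\eta_n)$ with $\eta_n \to 0$.

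With (\ref{50.1}) and (\ref{50.2}) verified, formula (\ref{7.10}) follows by substituting (\ref{7.1}) and (\ref{7.2}) into the conclusion $\gamma_n \sim 2\sqrt{\beta_n^-(z_n^*)}\sqrt{\beta_n^+(z_n^*)}$ of Theorem~\ref{thm50}, with the branches of $\sqrt{A^mB^{m+1}}$ and $\sqrt{a^{m+1}b^m}$ (and likewise for $\Delta^-$) chosen as in (\ref{51.630}) so that $z_n^\pm$ are the roots of (\ref{51.63}) and (\ref{51.64}) respectively. For the deviation formulas (\ref{7.11}), I would apply Theorem~\ref{thm51}(a), (b) in the Dirac case, using that along $\Delta^+$ the ratio $\sqrt{\beta_n^-(z_n^*)}/\sqrt{\beta_n^+(z_n^*)}$ behaves (up to square-root branch) like $\sqrt{(ab/(AB))^m \cdot (a/B)}$, so when $|ab| \neq |AB|$ this ratio tends to $0$ or $\infty$ and neither $+1$ nor $-1$ is a cluster point. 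Formula (\ref{7.12}) then comes from Theorem~\ref{thm51}(c) by the same substitution into $\mu_n - \tfrac12(\lambda_n^- + \lambda_n^+) \sim \tfrac12(\beta_n^+(z_n^*) + \beta_n^-(z_n^*))$.

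The main obstacle is the borderline regime $|ab| = |AB|$ (and its counterpart for $\Delta^-$), where $|\sqrt{\beta_n^-(z_n^*)}/\sqrt{\beta_n^+(z_n^*)}|$ stays bounded and bounded away from $0$, so that $\pm 1$ may be cluster points. In this case Theorem~\ref{thm51} must be applied along subsequences of $\Delta^\pm$ on which the ratio has a single limit point different from $-1$ (for (\ref{50.5})) or different from $+1$ (for (\ref{50.9})), and the branches of $\sqrt{A^mB^{m+1}}$, $\sqrt{a^{m+1}b^m}$, $\sqrt{a^mb^{m+1}}$, $\sqrt{A^{m+1}B^m}$ must be fixed compatibly on each such subsequence; the phrase ``chosen appropriately'' in the statement refers to exactly this subsequence-dependent branch selection, analogous to the examples following Theorem~\ref{thm17}. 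Once this is handled, the explicit expressions on the right-hand sides of (\ref{7.11}) and (\ref{7.12}) follow by direct substitution of (\ref{7.1}) and (\ref{7.2}) into the conclusions of Theorem~\ref{thm51}.
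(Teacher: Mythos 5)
Your proposal is correct and follows exactly the route the paper intends: the paper offers no written proof beyond the words ``Theorems \ref{thm50} and \ref{thm51} imply the following,'' and your verification of (\ref{50.1})--(\ref{50.2}) from the uniform asymptotics (\ref{7.1})--(\ref{7.2}), followed by direct substitution into (\ref{50.3}), (\ref{50.5}), (\ref{50.9}) and (\ref{500.5}), is precisely that argument made explicit. Your added discussion of the borderline case $|ab|=|AB|$ (where $\pm 1$ may be cluster points of the ratio and the branch choice becomes subsequence-dependent) correctly identifies the content hidden in the paper's phrase ``chosen appropriately.''
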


\section{Concluding Remarks and Comments}

1. Theorem \ref{thm50} claims the existence of branches of
$\sqrt{\beta_n^- (z)} $ and $\sqrt{\beta_n^+ (z)} $
such that  (\ref{50.63}), (\ref{50.64}) and (\ref{50.3}) hold.
The same choice of branches is good for the validity of
Theorem~\ref{thm51}.

When applying these theorems (see Section 5-8)
the asymptotics of $\beta_n^\pm $ are known, say we have
\begin{equation}
\label{10.1}
\beta_n^\pm (z) \sim B_n^\pm \quad  \text{for} \quad |z| \leq \rho_n,
\end{equation}
where the sequences  $B_n^\pm $ are
given by explicit expressions. Of course, then one could
replace formally $\sqrt{\beta_n^\pm (z_n^*)}$ by  $\sqrt{B_n^\pm} $ in
the asymptotic formulas (\ref{50.3}) and (\ref{50.4})--(\ref{50.9}).
The following question arise: What choice of these square
roots would guarantee the validity of those formulas?

It turns out that sometimes we cannot make a canonical choice.
The difficulty of choosing explicitly those square roots stems out from
our definition of $\lambda_n^\pm.$  Recall that
$\lambda_n^+$  is the eigenvalue which real part is larger, or which imaginary part
is larger if the real parts are equal.  In other words, by definition for
the difference $\gamma_n = \lambda_n^+ - \lambda_n^-$
we have either $Re \, \gamma_n  > 0$ or $Re \, \gamma_n  =0$
but $Im \, \gamma_n >0.$   This definition is asymptotically unstable, so if
we know the asymptotics
$$\gamma_n \sim  \pm i \, H_n,  \quad H_n >0 $$
only, then it is impossible to determine the correct sign.

\begin{Remark}
\label{rem30}
Let (\ref{10.1}) holds, and let
there is a constant $\varepsilon_0 > 0 $  such that
\begin{equation}
\label{20.1}
- \pi + \varepsilon_0 <  Arg  \left ( B^-_n B_n^+
) \right ) < \pi - \varepsilon_0,
\quad \text{for} \quad n \in \Delta.
\end{equation}
Then formulas (\ref{50.3}) and (\ref{50.4})--(\ref{50.9})
hold, with $\sqrt{\beta_n^\pm (z_n^*)}$ replaced by
$\sqrt{B_n^\pm}, $
if and only if $\sqrt{B_n^-}$
and $\sqrt{B_n^+}$  are chosen so that
\begin{equation}
\label{20.2}
Re  \left ( \sqrt{B_n^-} \sqrt{ B_n^+} \right )  >0
\quad \text{for} \quad n \in \Delta.
\end{equation}
\end{Remark}

Next we consider the choice of square roots in the context of
Theorem~\ref{thm17}.
\begin{Question}
\label{q1}
Let $a=|a|e^{i\varphi} $ and $  b= |b|e^{i\psi}; $  then
\begin{equation}
\label{20.20}
\sqrt{a^n} = \pm |a|^{n/2}  e^{in\varphi/2}, \quad
\sqrt{b^n} = \pm |b|^{n/2}  e^{in\psi/2}.
\end{equation}
Can one give a canonical choice  of 
$\sqrt{a^n} $   and $\sqrt{b^n} $ 
(i.e., a choice of signs in (\ref{20.20}))
that is good for the validity of
Theorem~\ref{thm17}?
\end{Question}

There are cases, when Remark~\ref{rem30} may be used to
answer Question~\ref{q1}. Consider the following.
\begin{Example}
Suppose $a=|a|e^{i\varphi}, \;  b= |b|e^{i\psi} $ and $\varphi +
\psi = \frac{p}{q} \pi, $  where $p, q \in \mathbb{N}$ are
relatively prime and $p $ is even.  Then (\ref{20.1}) holds with
$\varepsilon_0 = \pi/q$ for  $n\in \mathbb{N},$ so
Remark~\ref{rem30} could be applied.
\end{Example}

However, in general  we cannot apply Remark~\ref{rem30}.
For example,  if the number $(\varphi + \psi)/\pi  $  is  irrational,
then   $-1 = e^{i\pi}$
is a cluster point of the sequence $\left (e^{i n (\varphi + \psi)}\right) .$

\bigskip

2.  The crucial assumptions in our Theorems \ref{thm50} and
\ref{thm51} are \eqref{50.1} and  \eqref{50.2}. The examples
considered in Sections 5--8 (where all potentials are trigonometric
polynomials) are rather complicated. We were able to handle  those
examples only because the asymptotics of the corresponding functionals
$\beta_n^\pm (z)$  were known from our earlier papers.

The following  interesting class of Hill potentials was introduced  in 1996 by A. Shkalikov
in Seminar on Spectral Analysis at Moscow State University:
\begin{equation}
\label{w1} v \in W^{m+1}_1, \;\;
v^{(s)} (\pi) = v^{(s)} (0) \;\; \text{for} \;\; s=0, \ldots, m-1, 
\quad  v^{(m)} (\pi) \neq  v^{(m)} (0).
\end{equation}
He conjectured that for such potentials  the periodic (or antiperiodic)
system of normalized root functions is a Riesz basis and 
suggested a scheme of a proof.
The study of this class and its modifications confirmed the Shkalikov conjecture 
and led to a series of results on Riesz basis property \cite{MK, Ma06-1,VS09}.

\begin{Lemma}
\label{lem00}
If $v $ satisfies (\ref{w1}), then
\begin{align}
\label{9.3}
\beta_n^- (z) & \sim V(-2n) \sim
\frac{1/\pi}{(-2in)^{m+1}} \left ( v^{(m)}(0) - v^{(m)}(\pi)   \right ) ,
\quad |z| \leq n,
 \\ \label{9.4}
\beta_n^+ (z) &  \sim V(2n) \sim
\frac{1/\pi}{(2in)^{m+1}} \left ( v^{(m)}(0) - v^{(m)}(\pi)   \right ),
\quad |z| \leq n.
\end{align}
\end{Lemma}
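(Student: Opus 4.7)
The plan is to proceed in two stages: first establish the sharp Fourier coefficient asymptotic $V(\pm 2n) \sim \frac{v^{(m)}(0)-v^{(m)}(\pi)}{\pi(\pm 2in)^{m+1}}$, and then show that the functional $\beta_n^\pm(z)$ is asymptotic to its leading Fourier term $V(\pm 2n)$ uniformly on the stated disc in $z$.

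For the Fourier asymptotic, I would start from $V(2n) = \frac{1}{\pi}\int_0^\pi v(x) e^{-2inx}\,dx$ and integrate by parts $m$ times. Because $e^{-2in\pi}=1$ for integer $n$ and $v^{(s)}(0)=v^{(s)}(\pi)$ for $s=0,\dots,m-1$, every boundary term vanishes in these first $m$ steps, so
\[
V(2n)=\frac{1}{\pi(2in)^{m}}\int_0^\pi v^{(m)}(x)\,e^{-2inx}\,dx.
\]
One further integration by parts produces the boundary contribution $\frac{v^{(m)}(0)-v^{(m)}(\pi)}{\pi(2in)^{m+1}}$ together with the remainder $\frac{1}{\pi(2in)^{m+1}}\int_0^\pi v^{(m+1)}(x)e^{-2inx}dx$, which is $o(n^{-m-1})$ by the Riemann--Lebesgue lemma applied to $v^{(m+1)}\in L^1$. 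The hypothesis $v^{(m)}(0)\ne v^{(m)}(\pi)$ guarantees that the boundary term is of the right order and non-vanishing, yielding the asymptotic for $V(2n)$; the case $V(-2n)$ is identical.

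For the second stage I would invoke the explicit series representation of $\beta_n^{\pm}(z)$ in terms of the Fourier coefficients of $v$ (formulas (2.16)--(2.33) in \cite{DM15} and their refinements in \cite{DM21}). That representation has the form $\beta_n^+(z) = V(2n) + \sum_{k\ge 1} S_{n,k}^+(z)$, where $S_{n,k}^+(z)$ is a convergent $(k+1)$-fold sum of products of $V$-values divided by $k$ resolvent-type denominators, each comparable to $n^2+z-j^2$ for the summation variable $j$. Writing $\beta_n^+(z) = V(2n)\bigl(1+r_n^+(z)\bigr)$, it suffices to prove $r_n^+(z)\to 0$ uniformly in $|z|\le n$; the argument for $\beta_n^-$ is parallel, using the dual expansion around the index $-n$.

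The main obstacle, and the step that will require the most care, is the uniform remainder bound $\sum_{k\ge 1}|S_{n,k}^+(z)| = o(n^{-m-1})$ on the relatively large disc $|z|\le n$ (for the smaller disc $|z|\le n/4$ the standard DM machinery is immediately applicable, so a small extra argument is needed to reach $|z|\le n$). The two inputs are: (a) the a priori bound $|V(j)|\le C(1+|j|)^{-m-1}$, obtained by the same $m+1$ integrations by parts as in the first stage together with $v^{(m+1)}\in L^1$; and (b) the standard resolvent sums $\sum_{j\ne\pm n}|n^2+z-j^2|^{-1}$ and $\sum_j |n^2+z-j^2|^{-2}$ estimated as in \cite[Sect.~2]{DM15} and \cite[Sect.~4]{DM21}. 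Inserting these into each multilinear term $S_{n,k}^+(z)$ and applying Cauchy--Schwarz layer by layer, one gains an extra factor of $O(n^{-1}\log n)$ per additional summation level, so that the full tail is $O\bigl(n^{-m-2}(\log n)^{\kappa}\bigr)$ for some fixed $\kappa$. This gives $r_n^+(z) = O((\log n)^{\kappa}/n)$, yielding (\ref{9.4}); reversing the sign of $n$ (or working with the dual expansion) gives (\ref{9.3}).
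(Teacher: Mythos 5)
The paper does not actually prove Lemma \ref{lem00}: it defers entirely to the reference \cite{DM32} (``in preparation''), stating only that the lemma ``follows from our results'' there, namely from the systematic study of classes of potentials with $\beta_n^\pm(z)\sim V(\pm 2n)$. Your two-stage argument is therefore not in competition with a written proof, but it is sound and is evidently the route the authors intend: stage one (the $m$ vanishing boundary terms, the single surviving boundary term $\frac{v^{(m)}(0)-v^{(m)}(\pi)}{\pi(\pm 2in)^{m+1}}$, and Riemann--Lebesgue applied to $v^{(m+1)}\in L^1$) is the standard and correct derivation of the second ``$\sim$'' in (\ref{9.3})--(\ref{9.4}); stage two is exactly the assertion $\beta_n^\pm(z)\sim V(\pm 2n)$ that \cite{DM32} is quoted for. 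Two points deserve the care you already flag. First, the generic $\ell^2$-type remainder bounds of \cite{DM15} (which control $\beta_n^\pm(z)-V(\pm 2n)$ by squares of $\ell^2$-tails of $(V(j))$) are not by themselves enough to conclude that the remainder is $o(|V(2n)|)$; one genuinely needs the pointwise decay $|V(j)|\le C(1+|j|)^{-m-1}$ fed into the multilinear sums, as you do, and the resulting gain of essentially one power of $n$ per summation level (your Cauchy--Schwarz computation does deliver $O(n^{-m-2})$ for the first correction, which beats $|V(2n)|\asymp n^{-m-1}$). Second, the disc $|z|\le n$ in the statement exceeds the disc $|z|\le n/4$ on which Lemma \ref{basic} defines $\beta_n^\pm$; since the denominators $n^2+z-j^2$, $j\ne\pm n$, still satisfy $|n^2+z-j^2|\ge |n^2-j^2|-n\ge n-1$ on the larger disc, the construction and your estimates extend, but this extension should be stated explicitly rather than borrowed silently from \cite{DM15}. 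With those two caveats made precise, your proposal is a complete and correct proof of a statement the paper leaves unproved.
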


In \cite{DM32}, we study systematically classes of Hill potentials $v$
such that the corresponding functionals $\beta_n^\pm $
satisfy
$$
\beta_n^\pm (z) \sim V(\pm 2n).
$$
Lemma~\ref{lem00}
follows from our results in \cite{DM32}.

In view of Lemma~\ref{lem00},
Theorems \ref{thm50} and \ref{thm51} imply the
following.
\begin{Proposition}
Let  $v$  satisfies (\ref{w1}).  

(a)  If $m$ is odd, then
\begin{equation}
\label{9.11} \gamma_n  \sim \pm \frac{2/\pi}{(2in)^{m+1}} \left (
v^{(m)}(0) - v^{(m)}(\pi)   \right ),
\end{equation}
where the signs $\pm$  are chosen appropriately, and
\begin{equation}
\label{9.13} \mu_n - \frac{1}{2}(\lambda_n^- + \lambda_n^+)
 \sim    \frac{1/\pi}{(2in)^{m+1} }
\left ( v^{(m)}(\pi) - v^{(m)}(0)   \right ).
\end{equation}
Moreover,

(a1)   if   $ \Delta \subset \mathbb{N} $ is an infinite set, and
the sign on the right of \eqref{9.11} is $(+)$ for $n \in \Delta, $
then
\begin{equation}
\label{9.15} \mu_n - \lambda_n^+  \sim \frac{2/\pi}{(2in)^{m+1}}
\left ( v^{(m)}(0) - v^{(m)}(\pi)   \right ) \quad \text{for} \;\;
n\in \Delta;
\end{equation}

(a2)  if   $ \Delta \subset \mathbb{N} $ is an infinite set, and the
sign on the right of \eqref{9.11} is $(-)$ for $n \in \Delta, $ then
\begin{equation}
\label{9.17} \mu_n - \lambda_n^-  \sim - \frac{2/\pi}{(2in)^{m+1}}
\left ( v^{(m)}(0) - v^{(m)}(\pi)   \right ) \quad \text{for} \;\;
n\in \Delta.
\end{equation}

(b) If $m$ is even, then
\begin{equation}
\label{9.12} \gamma_n  \sim \pm \frac{2i/\pi}{(2in)^{m+1}} \left (
v^{(m)}(0) - v^{(m)}(\pi)   \right ),
\end{equation}
where the signs $\pm$  are chosen appropriately,
but there is no formula for $\mu_n - \frac{1}{2}(\lambda_n^- + \lambda_n^+).$

Moreover,
\begin{equation}
\label{9.14} \mu_n - \lambda_n^+  \sim \mp \frac{i/\pi}{(2in)^{m+1}}
\left ( v^{(m)}(0) - v^{(m)}(\pi)   \right ),
\end{equation}
where the signs are opposite to those in (\ref{9.12}), and
\begin{equation}
\label{9.16} \mu_n - \lambda_n^-  \sim \pm \frac{i/\pi}{(2in)^{m+1}}
\left ( v^{(m)}(0) - v^{(m)}(\pi)   \right )
\end{equation}
where the signs are the same as in (\ref{9.12}).

\end{Proposition}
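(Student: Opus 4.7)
The plan is to reduce the proposition to direct application of Theorems~\ref{thm50} and~\ref{thm51} (together with Corollary~\ref{cor15} in the odd-$m$ case) using the asymptotics of $\beta_n^\pm$ supplied by Lemma~\ref{lem00}. Set $C = \tfrac{1}{\pi}(v^{(m)}(0)-v^{(m)}(\pi))$; this is nonzero by (\ref{w1}). Lemma~\ref{lem00} then gives
\[
\beta_n^+(z_n^*) \sim \frac{C}{(2in)^{m+1}}, \qquad \beta_n^-(z_n^*) \sim \frac{C}{(-2in)^{m+1}} = \frac{(-1)^{m+1}C}{(2in)^{m+1}},
\]
so both are nonzero for large $n$ and $|\beta_n^-(z_n^*)|/|\beta_n^+(z_n^*)|\to 1$. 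By Remark~\ref{R2} this uniform boundedness of the ratio propagates the relation $\beta_n^\pm(z)=\beta_n^\pm(z_n^*)(1+o(1))$ from the single point $z=z_n^*$ to the whole disk $D_n$, so hypotheses (\ref{50.1})--(\ref{50.2}) are satisfied on $\Delta=\mathbb{N}$ and the two theorems apply.

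Multiplying the two asymptotics gives $\beta_n^+(z_n^*)\beta_n^-(z_n^*)\sim (-1)^{m+1}C^2/(4n^2)^{m+1}$, whose square root is a real multiple of $C/(2n)^{m+1}$ when $m$ is odd and an imaginary multiple when $m$ is even. Substituting into $\gamma_n\sim 2\sqrt{\beta_n^-(z_n^*)}\sqrt{\beta_n^+(z_n^*)}$ from Theorem~\ref{thm50} recovers $\pm\frac{2C}{(2in)^{m+1}}$ and $\pm\frac{2iC}{(2in)^{m+1}}$, i.e.\ (\ref{9.11}) and (\ref{9.12}); as emphasized in Remark~\ref{rem30}, the $\pm$ is intrinsic and is fixed by the convention distinguishing $\lambda_n^+$ from $\lambda_n^-$, not by the $\beta_n^\pm$ themselves. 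For the deviations the argument then splits on the parity of $m$. When $m$ is odd, $\beta_n^-/\beta_n^+\to 1$; choosing branches so that $\sqrt{\beta_n^-(z_n^*)}/\sqrt{\beta_n^+(z_n^*)}\to 1$ places us in the hypothesis of Corollary~\ref{cor15}, whose Hill-case conclusion $\lambda_n^+-\mu_n\sim\gamma_n$ and $\mu_n-\lambda_n^-=o(\gamma_n)$ delivers (\ref{9.15}) and (\ref{9.17}) according to the sign chosen in (\ref{9.11}). For the mean-deviation (\ref{9.13}), since $-1$ is not a cluster point of $\beta_n^-/\beta_n^+$, Theorem~\ref{thm51}(c) applies and, combined with $\beta_n^++\beta_n^-\sim 2C/(2in)^{m+1}$, gives the claim. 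When $m$ is even, $\beta_n^-/\beta_n^+\to -1$, hence $\sqrt{\beta_n^-(z_n^*)}/\sqrt{\beta_n^+(z_n^*)}\to\pm i$, and neither $1$ nor $-1$ is a cluster point, so both parts (a) and (b) of Theorem~\ref{thm51} are available. Because $\beta_n^++\beta_n^-=o(\beta_n^+)$ in this case, one has $(\sqrt{\beta_n^+(z_n^*)}\pm\sqrt{\beta_n^-(z_n^*)})^2\sim\pm 2\sqrt{\beta_n^+(z_n^*)}\sqrt{\beta_n^-(z_n^*)}\sim\pm\gamma_n$, which yields (\ref{9.14}) and (\ref{9.16}) with signs correlated to the one in (\ref{9.12}). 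The absence of a mean-deviation formula in the even-$m$ case reflects the genuine failure of the hypothesis of Theorem~\ref{thm51}(c): $-1$ is then a cluster point of $\beta_n^-/\beta_n^+$.

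The main obstacle is not analytical but the careful bookkeeping of signs. One must pick compatible branches $\sqrt{\beta_n^\pm(z)}$ fulfilling the analyticity requirements of Theorem~\ref{thm50}, verify that the resulting sign of $\gamma_n$ matches the convention $\operatorname{Re}\gamma_n>0$ (or $\operatorname{Im}\gamma_n>0$ in the case of equality), and then propagate the same choice through Theorem~\ref{thm51} so that the $\pm$ signs appearing in (\ref{9.11})--(\ref{9.17}) are mutually consistent.
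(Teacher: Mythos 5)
Your overall route coincides with the paper's: feed the asymptotics of Lemma~\ref{lem00} into Theorems~\ref{thm50} and \ref{thm51} (using Remark~\ref{R2} to secure hypotheses \eqref{50.1}--\eqref{50.2}), then split according to the parity of $m$ and the sign of $\sqrt{\beta_n^-}/\sqrt{\beta_n^+}$. Two points in your write-up need repair.

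First, an arithmetic slip. With $\beta_n^+\sim C/(2in)^{m+1}$ and $\beta_n^-\sim C/(-2in)^{m+1}$ one has $\beta_n^+\beta_n^-\sim C^2/\bigl[(2in)(-2in)\bigr]^{m+1}=C^2/(4n^2)^{m+1}$, with no factor $(-1)^{m+1}$; the spurious sign in your product comes from replacing $(2in)^{2(m+1)}$ by $(4n^2)^{m+1}$ instead of $(-4n^2)^{m+1}$. Hence $\sqrt{\beta_n^-}\sqrt{\beta_n^+}\sim\pm C/(2n)^{m+1}$ is a \emph{real} multiple of $C/(2n)^{m+1}$ for both parities. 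Your claim that it is an imaginary multiple for even $m$ would give $\gamma_n\sim\pm 2iC/(2n)^{m+1}$, which is \emph{not} what \eqref{9.12} says: for even $m$ one has $2i/(2in)^{m+1}=\pm 2/(2n)^{m+1}$, again real. The formulas you ultimately assert are the right ones, but the computation as written does not produce them.

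Second, and more substantively, in the odd-$m$ case you propose to ``choose branches so that $\sqrt{\beta_n^-(z_n^*)}/\sqrt{\beta_n^+(z_n^*)}\to 1$'' and let Corollary~\ref{cor15} deliver both \eqref{9.15} and \eqref{9.17}. That choice is not available: the branches are pinned down (up to the normalization described in Theorem~\ref{thm50}) by the requirement that $z_n^+$ and $z_n^-$ be the roots of \eqref{51.63} and \eqref{51.64} respectively, so whether the ratio tends to $+1$ or $-1$ is dictated by the operator, not by you --- this is precisely what separates case (a1) from case (a2), and it is the subtlety the paper emphasizes (``it may happen that the appropriate choice of branches is such that $\sqrt{\beta_n^-(z)}\sqrt{\beta_n^+(z)}\sim -B_n$''). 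Corollary~\ref{cor15} covers only the ratio-$\to 1$ case, yielding \eqref{9.15} together with $\mu_n-\lambda_n^-=o(\gamma_n)$; for case (a2), where the ratio tends to $-1$, you must instead invoke \eqref{50.8} of Theorem~\ref{thm51}(b), which is what the paper does. Your treatment of \eqref{9.13} via \eqref{500.4} and of the even-$m$ case via $\beta_n^++\beta_n^-=o(\beta_n^+)$ agrees with the paper.
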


\begin{proof}
If $m$ is odd, then
$$
\beta_n^\pm (z) \sim B_n:= \frac{1/\pi}{(2in)^{m+1}} \left (
v^{(m)}(0) - v^{(m)}(\pi)   \right ).
$$
We have $\beta_n^- (z)/\beta_n^+ (z) \sim 1, $ so (\ref{9.13})
follows from (\ref{500.4}).

Let  $B_n = |B_n| e^{\varphi_n}$  with $\varphi_n \in (-\pi, \pi],$
and let $\sqrt{B_n} = |B_n|^{1/2} e^{\varphi_n/2}. $
We have $\beta_n^\pm \sim B_n $ but nevertheless it may happen
that the appropriate choice of branches $\sqrt{\beta_n^\pm (z)} $ is
such that
$$
\sqrt{\beta_n^- (z)}  \sqrt{\beta_n^+ (z)} \sim - B_n.
$$

In the case (a1) we have $\sqrt{\beta_n^- (z)}/\sqrt{\beta_n^+ (z)}
\sim 1 \quad \text{for} \;\; n \in \Delta,$ so (\ref{9.15}) follows
from (\ref{50.4}).

In the case (a2) we have $\sqrt{\beta_n^- (z)}/\sqrt{\beta_n^+ (z)}
\sim -1 \quad \text{for} \;\; n \in \Delta,$ so (\ref{9.15}) follows
from (\ref{50.8}).

In the case (b) $m$ is even, so we have
$$
\beta_n^- (z) \sim -B_n,\quad \beta_n^+ (z) \sim B_n,
$$
where is given by the same expression as above. Since $\beta_n^-
(z)/\beta_n^+ (z) \sim - 1,$  Theorem~\ref{thm51} does not provide
an asymptotic formula for $\mu_n - \frac{1}{2}(\lambda_n^- +
\lambda_n^+).$

Of course, (\ref{9.11}) holds with appropriate choice of signs
$\pm.$ On the other hand, we have
$$
\beta_n^- (z)+\beta_n^+ (z) = B_n \, o(1),
$$
so from Theorem~\ref{thm51} it follows that
$$
\mu_n - \lambda_n^+ \sim - \gamma_n/2, \quad \mu_n - \lambda_n^-
\sim \gamma_n/2.
$$
Thus, (\ref{9.14}) and (\ref{9.16}) hold.
\end{proof}

\bigskip

3.  The Hill operator
$$
L(v) y = y^{\prime \prime} + v(x) y,  \quad v \in L^1 ([0,\pi])
$$
considered with Neumann boundary conditions
$$
y^\prime (\pi) = y^\prime (0) =0
$$
has a discrete spectrum;  for large enough $n$ there is exactly
one Neumann eigenvalue
$\nu_n $ such that $|\nu_n - n^2| < n. $
Let
$$
\delta_n^{Neu} = \nu_n - \lambda_n^+ \quad \text{and}
\quad \delta_n^{Dir} = \mu_n - \lambda_n^+
$$
be the Neumann and Dirchlet deviations.
Recently Batal \cite{B13} proved, for potentials $v \in L^p  ([0,\pi]),$
that
$$
|\delta^{Neu}_n | + |\gamma_n|    \sim  |\delta^{Dir}_n | + |\gamma_n|.
$$
This asymptotic equivalence shows that one can replace the
Dirichlet deviation by the
Neumann deviation in Criterion~\ref{crit3}  and in \cite[Theorem~67]{DM15}.

Does there exists an analog of Theorem~\ref{thm51} for the Neumann deviations
$\nu_n - \lambda_n^+?$   The answer is positive --
following the same approach as in the proof of Theorem~\ref{thm51}
one can obtain asymptotic formulas for
the Neuman deviations  in terms of $\beta_n^\pm (z_n^*).$

\end{document}